\newtheorem{theorem}{Theorem}[section]
\newtheorem{lemma}[theorem]{Lemma}
\newtheorem{proposition}[theorem]{Proposition}
\newtheorem{corollary}[theorem]{Corollary}
\theoremstyle{definition}
\newtheorem{definition}[theorem]{Definition}
\theoremstyle{remark}
\newtheorem{remark}[theorem]{Remark}
\numberwithin{equation}{section}
\begin{document}

\title[The distribution of  eigenvalues of randomizes permutation matrices]{The distribution of  eigenvalues of randomized permutation matrices}

\author[J. Najnudel]{Joseph Najnudel}
\address{Institut f\"ur Mathematik, Universit\"at Z\"urich, Winterthurerstrasse 190,
8057-Z\"urich, Switzerland}
\email{\href{mailto:joseph.najnudel@math.uzh.ch}{joseph.najnudel@math.uzh.ch}}

\author[A. Nikeghbali]{Ashkan Nikeghbali}
\email{\href{mailto:ashkan.nikeghbali@math.uzh.ch}{ashkan.nikeghbali@math.uzh.ch}}

\date{\today}

\begin{abstract}
In this article we study in detail a family of random matrix ensembles which are obtained from random permutations matrices (chosen at random according to the Ewens measure of parameter $\theta>0$) by replacing 
the entries equal to one by more general non-vanishing complex random variables. For these ensembles, in contrast with 
more classical models as the Gaussian Unitary Ensemble, or the Circular Unitary Ensemble,  the  eigenvalues can be very explicitly computed by using the cycle structure of the permutations. Moreover, by using the so-called 
virtual permutations, first introduced by Kerov, Olshanski and Vershik, and studied with a probabilistic point of view by Tsilevich, we are able 
to define, on the same probability space, a model for each dimension greater than or equal to one, which gives a meaning to the notion of almost sure convergence when
the dimension tends to infinity. In the present paper, depending on the precise model which is considered, we obtain a number of different results of convergence for the point measure of the eigenvalues, some of these results giving a strong convergence, which is not common in random matrix theory.
\end{abstract}

\maketitle
\tableofcontents
\section{Introduction}
\subsection{Random permutation matrices and outline of the paper}
The distribution of the eigenvalues of random matrices and some related objects such as their characteristic polynomials have received much attention in the last few decades. They have been applied in such diverse branches as physics, number theory, analysis or probability theory as illustrated in the monographs \cite{ms}, \cite{mehta} and \cite{agz} or the survey paper \cite{diaconis}. The main matrix ensembles which have been studied are the Gaussian ensembles and  some of their generalizations, the classical compact Lie groups $U(N)$ (the group of unitary matrices), $O(N)$ (the orthogonal group) and some of their continuous sub-groups, endowed with the Haar probability measure. It is also natural to investigate for the distribution of the eigenvalues  of random permutation matrices, i.e. matrices which are canonically associated to a random element of a given
finite symmetric group. Indeed it is well-known that the eigenvalues of a permutation matrix $M_\sigma$ associated with a permutation $\sigma$ are entirely determined by the cycle structure of $\sigma$, and hence one can hope to take advantage of the extensive literature on random permutations (see e.g. the book by Arratia, Barbour and Tavar\'e \cite{ABT}) to describe completely the structure of the point process of the eigenvalues of random permutation matrices (e.g. the correlation measure of order $q$, the convergence of the normalized and non-normalized empirical spectral distribution,
 etc.).
 This has been shortly sketched out in the pioneering work by Diaconis and Shahshahani \cite{diaconissha}  and further developed by Wieand in \cite{wieand}, who studied the problem of counting how many eigenvalues lie in some fixed arc of the unit circle. Wieand compares the results obtained in this case with those obtained for the unitary group under the Haar measure and notices some similarities but also some differences when it comes to look at more refined aspects. Then it is suggested that one should try to compute finer statistics  related to the eigenvalues in order to see how random permutation matrices fit into the random matrix picture. Of course one expects some drastic differences: for instance the point process associated with the eigenvalues of random permutation matrices should not be determinantal whereas it is determinantal for the unitary group. The goal of the present work is to continue the work initiated by Diaconis and Shahshahani in \cite{diaconissha} and Wieand in \cite{wieand}. Before mentioning more precisely the various directions in which we wish to extend the existing work, it should be mentioned that other works have been recently done on random permutation matrices, such as the paper by Hambly, Keevach, O'Connell and Stark \cite{HKOS} on the characteristic polynomial of random unitary matrices, the papers by Zeindler (\cite{zz}) and by Dehaye and Zeindler (\cite{dzz}), or  the works by Wieand (\cite{wieand2}) and Evans (\cite{evans}) on the eigenvalues of random wreath products (but in this latter case, the techniques that are involved are different and we shall also address this framework in a future work).

We now briefly mention the way we shall continue and extend some of the previous works and shall postpone precise definitions to the next paragraph:
\begin{itemize}
\item We shall consider a larger ensemble of random matrices with more general distributions; roughly speaking, we first pick a permutation of size $N$ from $\Sigma_N$ (the group of permutations of size $N$) at random according to the Ewens measure of parameter $\theta\geq0$ (under this measure, the probability of a given permutation $\sigma$ is proportional to $\theta^n$, where $n$ is the number of cycles of $\sigma$) and then consider the corresponding matrix; we then replace the $1$'s by a sequence of i.i.d. random variables $z_1,\ldots,z_N$ taking values in $\mathbb{C}^*$. This ensemble is a sub-group of the linear sub-group $GL(N,\mathbb{C})$ and the classical random permutation matrices correspond to the case where $\theta=1$ and where the distribution of the $z_i$'s is the Dirac measure at $1$. The choice of the Ewens measure is natural since it is a one parameter deformation of the uniform distribution which is coherent with the projections from $\Sigma_{N^{'}}$ onto $\Sigma_N$ for $N^{'}\geq N$ (see the next paragraphs).  If the $z_i$'s take their values on the unit circle, then our ensemble is a sub-group of $U(N)$.
\item We shall also give a meaning to almost sure convergence for the probability empirical spectral distribution. Indeed, it is not a priori obvious to define almost sure convergence in random matrix theory since the probability space changes with $N$ and to the best of our knowledge there has not been, so far, a satisfactory way to overcome this difficulty.  We propose to deal with this problem by considering the space of \emph{virtual permutations}, which were introduced by Kerov, Olshanski and Vershik in \cite{KOV}. A virtual permutation is a sequence of permutations $(\sigma_N)_{N\geq1}$ constructed in a coherent way, in the sense that for all $N' \geq N \geq 1$, the cycle structure of $\sigma_N$ can be obtained from the cycle structure of $\sigma_{N^{'}}$ simply by removing the elements strictly larger than $N$. 
The virtual permutations satisfy the following property: if $(\sigma_N)_{N \geq 1}$ is a virtual permutation, if $N' \geq N \geq 1$ and if $\sigma_{N^{'}}$ is distributed according to the Ewens measure of parameter $\theta$ on $\Sigma_{N^{'}}$, then $\sigma_N$ follows the Ewens measure of parameter $\theta$ on $\Sigma_N$. We shall then take advantage of the work by Tsilevich \cite{tsi1}, who proved almost sure convergence for the relative cycle lengths of virtual permutations. We shall also extensively use an algorithmic way to generate the Ewens measure on the space of virtual permutations.
\item We shall study in detail the point process of the eigenvalues of a matrix $M$ drawn from our matrix ensembles. For instance we establish various convergence results for the empirical spectral measure $\mu(M)=\sum \delta_\lambda$, or for $\mu(M)/N$ ($N$ being the dimension of the matrix) where the sum is over all eigenvalues $\lambda$ of $M$, counted with multiplicity. We also consider the average measure obtained by taking the expectation of $\mu(M)$, as well as the correlation measure of order $q$. In the special case where the random variables $z_i$'s take their values on the unit circle, the point process associated with the eigenvalues  has some more remarkable properties. For instance, when the $z_i$'s are uniformly distributed on the unit circle,  the empirical spectral measure as well as its limit (in a sense to be made precise) are invariant by translation. Still in this case, the $1$-correlation for fixed $N$ and $N=\infty$ is the Lebesgue measure, whereas for $q\geq2$, and fixed $N$, the $q$-correlation is not absolutely continuous with respect to the Lebesgue measure anymore. For $N=\infty$, the pair correlation measure is still absolutely continuous with respect to the Lebesgue measure with an explicit density, but this result fails to hold for the correlations of order greater than or equal to $3$. One can push further the analogy with the studies made for the classical compact continuous groups by characterizing the distribution function of the smallest eigenangle (again when the $z_i$'s are uniformly distributed on the unit circle) as the solution of some integral equation. 
\end{itemize}

\subsection{Definitions and notation}\label{sec::1.2}

In this section we describe the structure of the eigenvalues of the "generalized permutation matrices" 
mentioned above, independently of any probability measure, and then we define the family of probability measures which will be studied in detail in the sequel of this article.
More precisely let $\Sigma_N$ be the group of permutations of order $N$.
By straightforward computations, one can check that 
 the set of matrices $M$ such that there exists a permutation
 $\sigma \in \Sigma_N$ and complex numbers $z_1, z_2,..., z_N$, different from zero, 
satisfying $M_{jk} = z_j \mathds{1}_{j = \sigma(k)}$, is a multiplicative group, denoted by $\mathcal{G}(N)$
in this article, and which can be written 
as the wreath product of $\mathbb{C}^*$ and $\Sigma_N$. The group $\mathcal{G}_N$ can also be viewed as the group generated by the permutation matrices and  the diagonal matrices of $GL(N,\mathbb{C})$. The elements of $\mathcal{G}(N)$ such that 
$|z_j| = 1$ for all $j$, $1 \leq j \leq N$, form a subgroup noted $\mathcal{H} (N) $ of $\mathcal{G} (N)$, 
which can be viewed as the wreath product of $\mathbb{U}$ (the set of complex numbers with modulus one) and 
$\Sigma_N$. One can also define, for all integers $k \geq 1$, the group $\mathcal{H}_k (N)$ of elements of
$\mathcal{G} (N)$ such that $z_j^k =1$ for $1 \leq j \leq N$: the subgroup $\mathcal{H}_k(N)$ of 
$\mathcal{H}(N)$  is the wreath product of $\mathbb{U}_k$ (group of $k$-th roots of unity) and $\Sigma_N$. 
 Note that the group structure of $\mathcal{G}$, $\mathcal{H}$ or 
$\mathcal{H}_N$ does not play a fundamental role in our work, however, this structure gives a parallel between
this paper and the study of other groups of matrices, such as the orthogonal group, the unitary group or the symplectic group (we 
note that $\mathcal{H}$ and $\mathcal{H}_N$ are subgroups of the unitary group of dimension $N$). 
The advantage of the study of matrices in $\mathcal{G}(N)$ is the fact that the structure of their
 eigenvalues can be very explicitly described in function of the cycle structure of the corresponding permutations.
 More precisely, let $M$ be a matrix satisfying $M_{jk} = z_j \mathds{1}_{j = \sigma(k)}$ for
all $1 \leq j,k \leq N$, where $\sigma$ is a permutation of order $N$, and $z_1,...,z_N \in \mathbb{C}^*$.
If the supports of the cycles of $\sigma$ are $C_1,...,C_n$, with corresponding cardinalities $l_1,...,l_n$,
and if for $1 \leq m \leq n$, $R_m$ is the set of the roots of order $l_m$ of the complex number
$$ Z_m :=\prod_{j \in C_m} z_j,$$ 
then the set of eigenvalues of $M$ is the union of the sets $R_m$, and the multiplicity of any eigenvalue is equal 
to the number of sets $R_m$ containing it. 
An example of explicit calculation is that of the
trace: since for all integers $l \geq 2$, the 
sum of the $l$-th roots of unity is equal to zero, one immediately deduces that
$$\operatorname{Tr}(M) = \sum_{j \in F} z_j,$$
where $F$ is the set of fixed points of $\sigma$. 
More generally, one can compute the trace of all the powers of $M$. Indeed, for all integers $k \geq 1$, the
 eigenvalues of $M^k$ can be computed 
by taking the $k$-th powers of the elements of the sets $R_m$. Therefore, 
$$\operatorname{Tr}(M^k) = \sum_{m = 1}^n \sum_{\omega^{l_m} = Z_m} \omega^k.$$
Now, if $l_m$ is not a divisor of $k$, the last sum is equal to zero, and if $l_m$ is a 
divisor of $k$, all the terms of the last sum are equal to $Z_m^{k/l_m}$. We deduce that
$$\operatorname{Tr}(M^k) = \sum_{l_m | \, k} l_m Z_m^{k/l_m}.$$
We see that the description of the eigenvalues and the computations above 
do not depend on any probability measure given on the space $\mathcal{G}(N)$.
Let us now define a particular class of probability measures on $\mathcal{G}(N)$ which will be studied in detail in this paper. 
\begin{definition}
Let $\theta > 0$ and let $\mathcal{L}$ be a probability distribution on $\mathbb{C}^*$. The probability measure
$\mathbb{P} (N, \theta, \mathcal{L})$ on $\mathcal{G}(N)$ is the law of the matrix $M(\sigma,z_1,...,z_N)$, where:
\begin{itemize}
\item the permutation $\sigma$ follows the Ewens measure of parameter $\theta$ on $\Sigma_N$, i.e. the probability
that $\sigma$ is equal to a given permutation is proportional to $\theta^{n}$, where $n$ is 
the number of cycles of $\sigma$.
\item for all $j$, $1 \leq j \leq N$, $z_j$ is a random variable following the probability law $\mathcal{L}$.
\item the random permutation $\sigma$ and the random variables $z_1,...,z_N$ are all independent. 
\item $M(\sigma,z_1,...,z_N)$ is the matrix $M \in \mathcal{G}(N)$ such that for all $1 \leq j,k \leq N$,
$M_{jk} = z_j \mathds{1}_{j = \sigma(k)}$. 
\end{itemize}
\noindent
 \end{definition}
In this paper we prove for a large class of probability distributions $\mathcal{L}$, the weak convergence of the law
  of the empirical measure of 
the eigenvalues, when the dimension $N$ tends to infinity.
 In several particular cases we are also interested in almost sure convergences, and then we 
 need to couple all the dimensions
$N$ on the same probability space. This can be done by introducing the so-called 
virtual permutations, which were first defined by Kerov, Olshanski and Vershik in \cite{KOV} and also studied by Tsilevich \cite{tsi1}.
A virtual permutation is a sequence $(\sigma_N)_{N \geq 1}$ of permutations, such that 
for all $N \geq 1$, $\sigma_N \in \Sigma_N$, and the cycle structure of $\sigma_N$ is obtained 
from the cycle structure of $\sigma_{N+1}$, simply by removing the element $N+1$ (for example
if $\sigma_8 = (13745)(28)(6)$, then $\sigma_7 = (13745)(2)(6)$). Now, for $\theta > 0$, it is possible
to define on the space of virtual permutations the so-called Ewens measure of parameter $\theta$ 
as the unique (by the monotone class theorem) probability measure, such that if $(\sigma_N)_{N \geq 1}$ 
follows this measure, $\sigma_N$ follows the Ewens($\theta$) measure on $\Sigma_N$. 
Now we can introduce the following definition:
\begin{definition}
Let $\theta > 0$ and let $\mathcal{L}$ be a probability law on $\mathbb{C}^*$. The probability measure
$\mathbb{P} (\infty, \theta, \mathcal{L})$, defined on the product of the probability spaces 
$\mathcal{G}(N)$, $N \geq 1$,
 is the law of a sequence of random matrices $(M_N)_{N \geq 1}$, such that
 $M_N = M (\sigma_N, z_1,...,z_N)$, where:
\begin{itemize}
\item the sequence $(\sigma_N)_{N \geq 1}$ is a random virtual permutation following 
Ewens measure of parameter $\theta$.
\item for all $j \geq 1$, $z_j$ is a random variable following the distribution $\mathcal{L}$.
\item the virtual permutation $(\sigma_N)_{N \geq 1}$ and the random variables $(z_j)_{j \geq 1}$
are independent.
\end{itemize}
\end{definition}
\noindent
It is easy to check that for all $N \geq 1$, the image of $\mathbb{P} (\infty, \theta, \mathcal{L})$
by the $N$-th coordinate projection is the measure $\mathbb{P} (N, \theta, \mathcal{L})$. 
The properties on $\mathbb{P} (N, \theta, \mathcal{L})$
or $\mathbb{P} (\infty, \theta, \mathcal{L})$ which are obtained in this article depend on the
 probability distribution $\mathcal{L}$ in an essential way. 
 
 In the next section, we shall review  some properties of virtual permutations which are needed in the sequel of the paper, and then we give general results on the point process associated with the eigenvalues of random matrices from $\mathcal{G}(N)$. We finally refine some of these results in the case of $\mathcal{H}(N)$.

\section{Generating the Ewens measure on the set of virtual permutations}\label{sec::2}

The space of virtual permutations was first introduced by Kerov, Olshanski and Vershik in \cite{KOV} in the context of representation theory; the interested reader can refer to the notes by Olshanski in \cite{ver} for more details and references. Here we shall  mostly be concerned with the probabilistic aspects of virtual permutations which were studied in detail by Tsilevich in \cite{tsi1}. We now review a construction of the virtual permutations which is explained in \cite{tsi1} and which is suitable for the probabilistic reasoning. We then show how to generate the Ewens measure of parameter $\theta$ on the space of virtual permutations. This is already explained by other means by Tsilevich in her unpublished note \cite{tsi2}; here we provide a more elementary way to generate it and give all details since this is going to be at the source of many of our proofs and because also virtual permutations might not be so well-known. In the sequel, we shall assume that the reader is familiar with the GEM and Poisson-Dirichlet distributions (if not, one can refer to \cite{bill}, p. 40--48).

 As already mentioned, one of the interests of the construction of virtual permutations  is that it gives a natural explanation
of convergences in law involved when one looks at the relative lengths of cycles of random permutations
in the symmetric group $\Sigma_N$ when $N$ goes to infinity. Recall that for all integer $N \geq 1$, 
there exists a bijective map
$$\Phi_N \, : \, \prod_{j=1}^N \{1,2,...,j\} \, \longrightarrow \, \Sigma_N$$
such that 
$$\Phi_N \left( (m_j)_{1 \leq j \leq N} \right) = \tau_{N, m_N} \circ \tau_{N-1,m_{N-1}} \circ \, \dots \, \circ
\tau_{2, m_2} \circ \tau_{1, m_1},$$
where for $\tau_{j,k}$ is the unique permutation such that $\tau_{j,k} (j) = k$, 
$\tau_{j,k} (k) = j$, and $\tau_{j,k} (l) = l$ for $l$ different from $j$ and $k$ (if $j \neq k$, $\tau_{j,k}$
is a transposition, if $j=k$, it is the identity). If $N_1 \geq N_2$, the bijections $\Phi_{N_1}$ and 
$\Phi_{N_2}$ induce a natural surjective map $\pi_{N_1, N_2}$ from $\Sigma_{N_1}$ to $\Sigma_{N_2}$, 
defined in the following way: if $\sigma \in \Sigma_{N_1}$, there exists
 a unique sequence $(m_j)_{1 \leq j \leq N_1}$, 
$m_j \in \{1,...,j\}$ such that 
$$\sigma = \Phi_{N_1} \left( (m_j)_{1 \leq j \leq N_1} \right),$$
and one then defines:
$$\pi_{N_1,N_2} (\sigma) := \Phi_{N_2} \left( (m_j)_{1 \leq j \leq N_2} \right).$$
Note that for $N_1 \geq N_2 \geq N_3$, $$\pi_{N_2, N_3} \circ \pi_{N_1, N_2} = \pi_{N_1, N_3}.$$
Now, a virtual permutation is a sequence of permutations $(\sigma_N)_{N \geq 1}$, such that $\sigma_N \in \Sigma_N$
for all $N$, and which is consistent with respect to $\pi$, i.e. for all $N_1 \geq N_2$, 
$$\pi_{N_1,N_2} (\sigma_{N_1}) = \sigma_{N_2}.$$
We denote by $\Sigma_{\infty}$ the (infinite) set of virtual permutations, and note that the group structure 
of $\Sigma_N$ does not induce any group structure on $\Sigma_{\infty}$. Moreover, there is 
a natural bijection, $$\Phi_{\infty} \, : \, \prod_{j=1}^{\infty} \{1,2,...,j\} \, \longrightarrow \, \Sigma_{\infty}
$$ induced by the bijections $\Phi_N$. Indeed, for all infinite sequences $(m_j)_{j \geq 1}$, $m_j \in 
\{1,...,j\}$ one defines:
$$\Phi_{\infty} \left((m_j)_{j \geq 1} \right) :=  (\sigma_N)_{N \geq 1}$$
where for all $N$,
$$ \sigma_N = \Phi_N \left( (m_j)_{1 \leq j \leq N} \right).$$
Here, one immediately checks that $(\sigma_N)_{N \geq 1}$ is consistent. Moreover, one can also define a
surjective map $\pi_{\infty,N}$ from $\Sigma_{\infty}$ to $\Sigma_{N}$ for all $N \geq 1$, by setting:
$$\pi_{\infty,N} \left((\sigma_n)_{n \geq 1} \right) := \sigma_N,$$
and one can check the relation
$$\pi_{N_1, N_2} \circ \pi_{\infty,N_1} = \pi_{\infty, N_2}.$$
 The cycle structure of a virtual 
permutation can be described by the so-called Chinese restaurant process, described for example
by Pitman (\cite{pitman}). More precisely, let $$\sigma_{\infty} = (\sigma_N)_{N \geq 1} \in \Sigma_{\infty}$$
be a virtual permutation. There exists a unique sequence $(m_j)_{j \geq 1}$, $m_j \in \{1,...,j\}$
such that $$ \sigma_{\infty} = \Phi_{\infty} \left( (m_j)_{j \geq 1} \right).$$ Then the cycle 
structure for $\sigma_N$ can be described by induction on $N$:
\begin{itemize}
\item $\sigma_{1}$ is (of course!) the identity of $\Sigma_1$; 
\item if $m_{N+1} = N+1$ for $N \geq 1$, the cycle structure of $\sigma_{N+1}$ is obtained from 
the structure of $\sigma_N$ by simply adding the fixed point $N+1$;
\item if $m_{N+1} \leq N$ for $N \geq 1$, the cycle structure of $\sigma_{N+1}$ is obtained from 
the structure of $\sigma_N$ by inserting $N+1$ just before $m_{N+1}$, in the cycle containing $m_{N+1}$.
\end{itemize}
\noindent
For example, if $\sigma_4 = (124)(3)$ and $m_5 = 5$, then $\sigma_5 = (124)(3)(5)$, and if 
$\sigma_4 = (124)(3)$ and $m_5 = 2$, then $\sigma_5 = (1524)(3)$. As described in \cite{tsi1}, one can define
on $\Sigma_{\infty}$ the so-called Ewens measure, which is the equivalent on virtual permutations of  the Ewens measure 
on $\Sigma_{N}$. More precisely, let $\theta \in \mathbb{R}_+$ be a parameter, and let $(M_j)_{j \geq 1}$
be a sequence of independent random variables, $M_j \in \{1,...,j\}$ such that for all $j \geq 2$:
$$\mathbb{P} [M_j = j]= \frac{\theta}{\theta + j - 1},$$
and 
$$\mathbb{P} [M_j = k] = \frac{1}{\theta + j - 1}$$
for all $k < j$. On $\Sigma_{\infty}$, the Ewens measure $\mu_{\infty}^{(\theta)}$ of parameter $\theta$
 is defined as the image of the law of $(M_j)_{j \geq 1}$ by the map $\Phi_{\infty}$. The name of this 
measure is consistent, since the image of $\mu_{\infty}^{(\theta)}$ by $\pi_{\infty, N}$ is precisely the
Ewens measure $\mu_{N}^{(\theta)}$ of parameter $\theta$ on $\Sigma_N$, under which the probability of
a permutation $\sigma_N$ is given by the expression: 
$$\frac{\theta^{c (\sigma_N) - 1} }{(\theta + 1)(\theta + 2)...(\theta + N - 1)},$$
where $c  (\sigma_N)$ is the number of cycles of $\sigma_N$.

\textbf{Remark:} For $\theta=1$, the Ewens measure on $S_N$ is the uniform measure, and for $\theta=0$, 
it is the uniform measure on permutations with a unique cycle. 

In \cite{tsi1} it is proved that if $(\sigma_N)_{N \geq 1}$ is a virtual permutation following the
Ewens measure of parameter $\theta$, then for all $k \geq 0$, the $k$-th length of cycle (by decreasing order)
corresponding to the permutation $\sigma_N$, divided by $N$, tends a.s. to a random variable $x_k$ when $N$ goes
to infinity. Moreover, the decreasing sequence $(x_k)_{k \geq 1}$ follows the Poisson-Dirichlet distribution of
parameter $\theta$. This property can in fact be easily  explained by the construction of the
Ewens measure on $\Sigma_{\infty}$ we give below. 

Let $\lambda = (\lambda_j)_{j \geq 1}$ be a decreasing sequence in $\mathbb{R}_+$ and let us denote:
$$K(\lambda):= \inf \{k \geq 1, \lambda_k =  0 \} \in \mathbb{N^*} \cup \{\infty\}.$$
The set $E(\lambda)$ is defined as the disjoint union of circles $(C_j)_{1 \leq j < K(\lambda)}$, such 
that $C_j$ has perimeter $\lambda_j$. Now let $x = (x_k)_{k \geq 1}$ be a sequence of distincts 
points in $E(\lambda)$. One defines a virtual permutation $\sigma_{\infty} (\lambda, x) = \left(\sigma_N(\lambda,x)
\right)_{N \geq 1}$ as 
follows: for $N \geq 1$, $k \in \{1,...,N\}$, there exists a unique $j$ such that the point $x_k$ 
lies on the circle $C_j$. Let us follow the circle $C_j$, counterclockwise, starting from 
$x_k$: the image of $k$ by $\sigma_N(\lambda,x)$ is the index of the first point in $\{x_1,...,x_N\}$
we encounter after $x_k$ (for example, if $x_k$ is the only point in $C_j$ and $\{x_1,...,x_N\}$,
then $k$ is a fixed point of $\sigma_N(\lambda,x)$, because starting from $x_k$, we do a full turn of
the circle $C_j$, before encountering $x_k$ again). 
The cycle structure of $\sigma_N(\lambda, x)$ is the following: two elements $k$ and $l$ in $\{1,...,N\}$ 
are in the same cycle if and only if $x_k$ and $x_l$ lie on the same circle, and the order of the elements $\{k_1,...,k_p\}$
in a given cycle corresponds to the counterclockwise order of the points $x_{k_1},...,x_{k_p}$, which are 
on the same circle. Moreover, the cycle structure of $\sigma_{N+1}(\lambda,x)$ can be obtained from 
the structure of $\sigma_N(\lambda,x)$ by a "chinese restaurant" process:
\begin{itemize}
\item If $x_{N+1}$ is on a circle which does not contain any of the points $x_1,...,x_N$, then 
one simply adds the fixed point $N+1$;
\item If $x_{N+1}$ is on a circle which contains some of the points $x_1,...,x_N$, and if $x_{N+1}$
lies just before $x_p$ if one follows this circle counterclockwise, then one inserts $N+1$ in 
the cycle containing $p$, just before $p$.
\end{itemize}
\noindent
This construction implies that $\left( \sigma_N(\lambda,x) \right)_{N \geq 1}$ is a consistent sequence of 
permutations, and then $\sigma_{\infty}( \lambda,x)$ is a virtual permutation. Therefore a virtual permutation
can be viewed as a chinese restaurant process with continuous tables (the circles $(C_j)_{1 \leq j < K(\lambda)}$), and
 with an infinite number 
of customers (the points $(x_k)_{k \geq 1}$): its component of index $N$ is obtained by taking into 
account only the $N$ first customers. 
 Note that for the moment, $\lambda$, 
the sequence of lengths of the circles (or the tables!) plays a minor role in our construction. However, it
becomes important when one introduces randomness. More precisely, in this framework,
one obtains the following construction
of the Ewens measure on $\Sigma_{\infty}$ (and therefore, on $\Sigma_N$, by using $\pi_{\infty, N}$):
\begin{proposition} \label{ewens}
Let $\theta \in \mathbb{R}_+$. If the random sequence $\lambda$ follows the Poisson-Dirichlet distribution of parameter 
$\theta$ (for $\theta = 0$, one sets $\lambda_1 = 1$ and $\lambda_k = 0$ for $k \geq 1$), and if, 
conditionally on $\lambda$, the points $(x_k)_{k \geq 1}$ are i.i.d., with distribution absolutely continuous with respect to the Lebesgue measure (e.g. the uniform distribution\footnote{Here, uniform 
on $E(\lambda)$ means
the following: $x_k$ lies in $C_j$ with probability $\lambda_j$, the perimeter of $C_j$ (note that 
the sum of the perimeters is a.s. equal to one), and conditionally on $x_k \in C_j$, $x_k$ is 
uniform on the circle $C_j$.
})
on $E(\lambda)$ (and hence, a.s. distinct), then for $x := (x_k)_{k \geq 1}$, the virtual permutation
 $\sigma_{\infty} \left(\lambda,x  \right)$ follows the Ewens measure of parameter $\theta$. 
 \end{proposition}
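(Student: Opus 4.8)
The plan is to show that the sequence $(m_j)_{j\ge1}$ attached to $\sigma_\infty(\lambda,x)$ through $\sigma_\infty(\lambda,x)=\Phi_\infty\big((m_j)_{j\ge1}\big)$ has the same law as the independent sequence $(M_j)_{j\ge1}$. Since $\mu_\infty^{(\theta)}$ is, by definition, the image of the law of $(M_j)_{j\ge1}$ under $\Phi_\infty$, this is exactly the assertion of the proposition. Now $m_1=1$ almost surely, and the $M_j$ are independent with $\mathbb P[M_{N+1}=N+1]=\theta/(\theta+N)$ and $\mathbb P[M_{N+1}=k]=1/(\theta+N)$ for $1\le k\le N$; hence it suffices to prove that, for each $N\ge1$, the conditional law of $m_{N+1}$ given $(m_1,\dots,m_N)$ equals the (unconditional) law of $M_{N+1}$. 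The case $\theta=0$ is trivial and handled apart: then $E(\lambda)$ is one circle of perimeter one, $x_1,\dots,x_N$ are i.i.d.\ uniform on it, their counterclockwise cyclic order is a uniformly chosen $N$-cycle, and this is $\mu_N^{(0)}$. So assume $\theta>0$.

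Comparing the two Chinese-restaurant descriptions of the passage from $\sigma_N$ to $\sigma_{N+1}$ recalled in this section, one reads off that $m_{N+1}=N+1$ if $x_{N+1}$ lies on a circle containing none of $x_1,\dots,x_N$, and $m_{N+1}=p$ if, starting from $x_{N+1}$ and moving counterclockwise, the first of the points $x_1,\dots,x_N$ reached is $x_p$. Write $a_p$ ($1\le p\le N$) for the length of the arc ending at $x_p$ on the circle carrying $x_p$, counterclockwise from the preceding point of $\{x_1,\dots,x_N\}$, and $a_0$ for the total length of the circles free of $x_1,\dots,x_N$, so $a_0+\sum_{p=1}^N a_p=1$. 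Since $x_{N+1}$ is uniform on $E(\lambda)$, conditionally on $\lambda$ and $x_1,\dots,x_N$ we have $m_{N+1}=p$ with probability $a_p$ and $m_{N+1}=N+1$ with probability $a_0$. Consequently everything reduces to the conditional law of $(a_1,\dots,a_N,a_0)$ given $(m_1,\dots,m_N)$.

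I would establish, by induction on $N$, the two statements: (i) conditionally on $(m_1,\dots,m_N)$, the vector $(a_1,\dots,a_N,a_0)$ is $\mathrm{Dirichlet}(1,\dots,1,\theta)$ (with $N$ parameters equal to $1$); and (ii) conditionally on $(m_1,\dots,m_N)$ and on that vector, the ranked sequence of lengths of the circles free of $x_1,\dots,x_N$, divided by $a_0$, follows the Poisson--Dirichlet distribution $\mathrm{PD}(\theta)$, independently of the arc vector. The base case $N=1$ is precisely the stick-breaking/size-biased-deletion description of $\mathrm{PD}(\theta)$: a size-biased pick from a $\mathrm{PD}(\theta)$ sequence is $\mathrm{Beta}(1,\theta)$, and after removing it and renormalizing the remainder is $\mathrm{PD}(\theta)$, independent of the removed size. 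For the induction step: conditioning on $\{m_{N+1}=p\}$ with $p\le N$ size-biases $(a_1,\dots,a_N,a_0)$ by its $p$-th coordinate, turning $\mathrm{Dir}(1,\dots,1,\theta)$ into $\mathrm{Dir}(1,\dots,2,\dots,1,\theta)$, and then $x_{N+1}$ splits the arc $a_p$ at a point uniform on it; by the elementary splitting identity for Dirichlet laws (a coordinate with parameter $2=1+1$ cut by an independent $\mathrm{Beta}(1,1)$), the updated arc vector is $\mathrm{Dir}(1,\dots,1,\theta)$ with $N+1$ ones, the collection of free circles being untouched and keeping its $\mathrm{PD}(\theta)$ law. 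Conditioning on $\{m_{N+1}=N+1\}$ size-biases by $a_0$, turning the last parameter into $\theta+1$; inside the free region $x_{N+1}$ makes a size-biased pick among the free circles, whose normalized length is $\mathrm{Beta}(1,\theta)$ with the remaining free circles renormalizing to $\mathrm{PD}(\theta)$ independently (induction hypothesis plus stick-breaking), and cutting the $a_0$-coordinate (parameter $\theta+1=1+\theta$) by this $\mathrm{Beta}(1,\theta)$ again yields $\mathrm{Dir}(1,\dots,1,\theta)$ with $N+1$ ones, the new free collection being $\mathrm{PD}(\theta)$ and independent of the new arc vector. This closes the induction, and since a coordinate of $\mathrm{Dir}(1,\dots,1,\theta)$ with parameter $1$ has mean $1/(\theta+N)$ while the one with parameter $\theta$ has mean $\theta/(\theta+N)$, statement (i) gives exactly that the conditional law of $m_{N+1}$ given $(m_1,\dots,m_N)$ puts mass $1/(\theta+N)$ on each $p\le N$ and $\theta/(\theta+N)$ on $N+1$, as required.

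The step needing the most care is the bookkeeping of the circles free of $x_1,\dots,x_N$ through the induction: one must keep track of the independence asserted in (ii)---the normalized free configuration being $\mathrm{PD}(\theta)$ independently of the arc vector---and propagate it through size-biasing by a coordinate and through Dirichlet splitting, each time appealing to the size-biased-deletion invariance of $\mathrm{PD}(\theta)$. The remainder is a direct reading of the circle construction of $\sigma_\infty(\lambda,x)$ together with routine Dirichlet and $\mathrm{GEM}/\mathrm{PD}$ identities.
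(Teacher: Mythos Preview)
Your argument is correct and takes a genuinely different route from the paper. The paper fixes $N$, uses exchangeability of $(x_k)_{k\le N}$ to reduce to computing the probability of a prescribed cycle-support partition, and then evaluates
\[
\mathbb{E}\Bigl[\sum_{i_1\neq\cdots\neq i_p}\prod_{n=1}^p \lambda_{i_n}^{l_n}\Bigr]
\]
by passing to a $\mathrm{GEM}(\theta)$ sequence, proving an auxiliary identity for $E(\theta,r,s)=\mathbb{E}\bigl[\sum_i\mu_i^r(1-\sum_{j\le i}\mu_j)^s\bigr]$, iterating it, and finishing with a combinatorial identity $\sum_{\sigma\in\Sigma_p}\prod_n\bigl(\sum_{m\ge n}l_{\sigma(m)}\bigr)^{-1}=\prod_n l_n^{-1}$. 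Your proof instead tracks the Chinese-restaurant coordinates $(m_j)$ directly: the inductive invariant that the arc-length vector $(a_1,\dots,a_N,a_0)$ is $\mathrm{Dir}(1,\dots,1,\theta)$, with the normalized free configuration an independent $\mathrm{PD}(\theta)$, immediately gives $\mathbb{P}[m_{N+1}=p\mid m_1,\dots,m_N]=\mathbb{E}[a_p]$ with the right values, and the invariant propagates via standard Dirichlet size-biasing and splitting together with size-biased deletion for $\mathrm{PD}(\theta)$. This is the Blackwell--MacQueen/Dirichlet-process viewpoint; it is more structural, avoids the paper's GEM moment computation and the final permutation identity, and makes the link between the circle picture and the Chinese restaurant transparent. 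The paper's approach, on the other hand, yields as a by-product the explicit partition-probability formula \eqref{1}, which is of independent use. One small remark: both proofs actually use that the common law of the $x_k$ puts mass $\lambda_j$ on $C_j$ (your line ``since $x_{N+1}$ is uniform on $E(\lambda)$'' is where this enters), so you are proving the uniform case singled out in the statement; the ``absolutely continuous'' phrasing in the proposition is only there to guarantee a.s.\ distinctness of the $x_k$.
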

\begin{proof} The probability law of a random virtual permutation
is uniquely determined by its image by $\Phi_{\infty}^{-1}$, which is the probability law 
of a sequence $(M_j)_{j \geq 1}$ of random variables ($M_j \in \{1,...,j\}$). By the monotone class 
theorem, this law is uniquely determined by the sequence of laws of $(M_j)_{1 \leq j \leq N}$,
$N \geq 1$. Now, by applying $\Phi_N$ for all $N \geq 1$, one deduces that the law of a virtual 
permutation is uniquely determined by the sequence of laws of its images by $\pi_{\infty,N}$,
$N \geq 1$. This property implies Proposition \ref{ewens} if one shows that for
all $N \geq 1$, $\sigma_{N} \left(\lambda,x \right)$ follows the Ewens measure of parameter $\theta$. To prove this, let 
us first observe that for all permutations $\psi \in S_{N}$, the law of $(x_{\psi(k)})_{1 \leq k \leq N}$
is equal to the law of $(x_k)_{1 \leq k \leq N}$. Hence, the law of $\sigma_{N} \left(\lambda,
x \right)$ is invariant by conjugation. This implies that the probability of 
a given permutation, under this law, depends only on its cycle structure, as under the Ewens measure.
Therefore, it is sufficient to prove that for all partitions $(l_1,...,l_p)$ of $N$, the 
probability that the supports of the cycles of $\sigma_{N} \left(\lambda,x \right)$ are
exactly the sets of the form $$\{l_1 + ... + l_n + 1, l_1 + ... + l_n + 2, \dots, l_1 + l_2 + ...+ l_{n} + l_{n+1} \}$$
for $0 \leq n \leq p-1$, is the same as under the  Ewens measure. Now, conditionally 
on $\lambda$, this probability can be written as follows:
$$\sum_{i_1 \neq i_2 \neq... \neq i_p} \prod_{n=1}^p \lambda_{i_n}^{l_n}.$$
Hence one only needs to prove the equality:
\begin{equation}
\mathbb{E} \left[\sum_{i_1 \neq i_2 \neq... \, \neq i_p} \prod_{n=1}^p \lambda_{i_n}^{l_n} \, \right] =
\frac{\theta^{p-1}}{(\theta+1)...(\theta + N-1)} \, \prod_{n=1}^p  (l_n - 1)! \label{1}
\end{equation}
since its right-hand side is the probability of the event described above, under the Ewens measure of parameter 
$\theta$. For $\theta = 0$, both sides are equal to one if $p=1$ and $l_1 = N$, and to zero
otherwise: therefore one can assume that $\theta > 0$. Let $(\mu_j)_{j \geq 1}$ be a random sequence 
following the GEM law of parameter $\theta$, and let $(\nu_j)_{j \geq 1}$ be the sequence obtained 
by putting $(\mu_j)_{j \geq 1}$ in decreasing order. One  obviously has
$$\sum_{i_1 \neq i_2 \neq \dots \neq i_p} \prod_{n=1}^p \mu_{i_n}^{l_n} = 
 \sum_{i_1 \neq i_2 \neq \dots \neq i_p} \prod_{n=1}^p \nu_{i_n}^{l_n}.$$
Now, $(\nu_j)_{j \geq 1}$ and $(\lambda_j)_{j \geq 1}$ have the same law, hence it is sufficient to prove that
\begin{equation}
\mathbb{E} \left[\sum_{i_1 \neq i_2 \neq \dots \neq i_p} \prod_{n=1}^p \mu_{i_n}^{l_n} \, \right] =
\frac{\theta^{p-1}}{(\theta+1)...(\theta + N-1)} \, \prod_{n=1}^p  (l_n - 1)! \label{nh28}
\end{equation}
One needs the following lemma:
\begin{lemma} \label{a}
Let $(\mu_j)_{j \geq 1}$ be a GEM process of parameter $\theta>0$, and $r, s \in \mathbb{R}_+$. 
Then the quantity
$$E(\theta,r,s) := \mathbb{E} \left[ \sum_{i=1}^{\infty} \mu_i^r \left( 1 - \sum_{j=1}^{i} \mu_j \right)^s  \, \right]$$
satisfies the equality
$$E(\theta,r,s) = \frac{r! (s+ \theta-1)! \theta}{(r+s+ \theta - 1)! (r+s)}.$$
\end{lemma}
\noindent
\textbf{Proof:} One can write
\begin{align*} 
E(\theta,r,s) & = \mathbb{E} \left[ \mu_1^r (1-\mu_1)^s \right] \\ & + 
\mathbb{E} \left[ (1- \mu_1)^{r+s} \, \mathbb{E} \left[ \sum_{i=2}^{\infty} \,
 \left( \left. \frac{\mu_i}{1 - \mu_1} \right)^r \left( 1 - \sum_{j=2}^{i} \frac{\mu_j}{1 - \mu_1} \right)^s \, 
\right| \mu_1 \right] \, \right].
\end{align*}
\noindent
Now conditionally on $\mu_1$, $\left( \frac{\mu_{j+1}}{1 - \mu_1} \right)_{j \geq 1}$ is a GEM process of 
parameter $\theta$. Therefore:
\begin{align*}
\mathbb{E} \left[ \sum_{i=2}^{\infty} \,
 \left( \left. \frac{\mu_i}{1 - \mu_1} \right)^r \left( 1 - \sum_{j=2}^{i} \frac{\mu_j}{1 - \mu_1} \right)^s \, 
\right| \mu_1 \right]  & = \mathbb{E} \left[ \sum_{i=1}^{\infty} \mu_i^r \left( 1 - \sum_{j=1}^{i} \mu_j
 \right)^s \right] \\ & = E(\theta,r,s).
\end{align*}
which implies
$$  E(\theta,r,s) = \mathbb{E} [ \mu_1^r (1 - \mu_1)^s]  + E(\theta,r,s) \, \mathbb{E} [(1 - \mu_1)^{r+s}],$$
and, since the density of the law of $\mu_1$, with respect to the Lebesgue measure is $\theta \, (1-x)^{\theta-1}$
on $(0,1)$,
\begin{align*}
 E(\theta,r,s) & = \frac{\mathbb{E} [ \mu_1^r (1 - \mu_1)^s] }{1 -  \mathbb{E} [(1 - \mu_1)^{r+s}]}
= \frac{ \theta \, \int_0^1 x^r (1-x)^{s + \theta - 1} dx}{ 1 - \theta \int_0^1 (1-x)^{r+s+ \theta - 1} dx}
\\ & = \frac{\theta \, r! (s+ \theta - 1)!/ (r+s+ \theta)!} {1 - \theta/(r+s+ \theta)} = 
\frac{\theta \, r! (s+ \theta - 1)! (r+s+ \theta)} {(r+s)  (r+s+ \theta)!} 
\end{align*}
which implies Lemma \ref{a}. 

Now let us go back to the proof of Proposition \ref{ewens}. For all integers $q \geq 0$, $1 \leq j_1 < ...
 \, <j_q$, and $r_1,...,r_q,r,s \geq 0$:
\begin{align}
& \; \mathbb{E} \left[ \left. \sum_{j = j_{q}+1}^{\infty} \left( \prod_{p=1}^{q} \mu_{j_p}^{r_p} \right) 
\, \mu_j^r \left(1 - \sum_{i=1}^j \mu_i \right)^s \, \right| \, (\mu_i)_{1 \leq i \leq j_q} \right] 
=  \left( \prod_{p=1}^{q} \mu_{j_p}^{r_p} \right)  \left(1 - \sum_{i=1}^{j_q} \mu_i \right)^{r+s} \nonumber \\ & 
\times \mathbb{E}\left[ \left. \sum_{j = j_{q}+1}^{\infty} \, \left( \frac{\mu_j} {1 -
 \sum_{i=1}^{j_q} \mu_i } \right)^r 
\left( 1 - \sum_{i=j_q + 1}^{j} \frac{\mu_i}{1 -  \sum_{i=1}^{j_q} \mu_i } \right)^s \right| 
\, (\mu_i)_{1 \leq i \leq j_q} \right]. \nonumber
\end{align}
\noindent
Since conditionally on $(\mu_i)_{1 \leq i \leq j_p}$, $$\left( \frac{\mu_{j_q + j'}}{1 - \sum_{i = 1}^{j_q} 
\mu_i } \right)_{j' \geq 1}.$$ is a GEM process of parameter $\theta$, the last conditional expectation
 is equal to $E(\theta,r,s)$. One deduces that
\begin{align*}
& \; \mathbb{E} \left[ \sum_{j = j_{q}+1}^{\infty} \left( \prod_{p=1}^{q} \mu_{j_p}^{r_p} \right) 
\, \mu_j^r \left(1 - \sum_{i=1}^j \mu_i \right)^s \right] \\ & = E(\theta,r,s) \, \mathbb{E} \left[ 
 \left( \prod_{p=1}^{q} \mu_{j_p}^{r_p} \right) 
\left(1 - \sum_{i=1}^{j_q} \mu_i \right)^{r+s} \right] .
\end{align*}
\noindent
By considering all the possible values of $(j_p)_{1 \leq p \leq q}$, and by adding the equalities, one
obtains that
\begin{align*}
& \; \mathbb{E} \left[ \sum_{j_1<j_2<\,...<j_{q+1}} \left( \prod_{p=1}^{q} \mu_{j_p}^{r_p} \right) 
\, \mu_{j_{q+1}}^r \left(1 - \sum_{i=1}^{j_{q+1}} \mu_i \right)^s \right] \\ & = E(\theta,r,s) \, \mathbb{E}
 \left[ \sum_{j_1<j_2<\,...<j_{q}}  \left( \prod_{p=1}^{q} \mu_{j_p}^{r_p} \right) 
\left(1 - \sum_{i=1}^{j_q} \mu_i \right)^{r+s} \right] .
\end{align*}
\noindent
By applying recursively this equality, and using Lemma \ref{a}, one deduces that
\begin{align*}
\mathbb{E} \left[ \sum_{j_1<j_2<\,...<j_{q}} \left( \prod_{p=1}^{q} \mu_{j_p}^{r_p} \right) \right]
 & = \prod_{p=1}^q \left[ E \left(\theta,r_p, \sum_{m= p+1}^q r_m \right) \right] \\ & = \prod_{p=1}^q 
\, \frac{\theta \, (r_p)! \, \left[ \left(  \sum_{m= p+1}^q r_m \right) + \theta - 1 \right]!}
{ \left(  \sum_{m= p}^q r_m \right) \, \left[ \left(  \sum_{m= p}^q r_m \right) + \theta - 1 \right]!}
\\ & = \theta^q \, \left( \prod_{p=1}^q r_p! \right) \, \frac{(\theta-1)!}
 { \left[ \left(  \sum_{m= 1}^q r_m \right) + \theta - 1 \right]!} \, \prod_{p=1}^q \frac{1}{\sum_{m=p}^q r_m}.
\end{align*}
We can now compute the left-hand side of \eqref{nh28}:
\begin{align*}
\mathbb{E} \left[\sum_{i_1 \neq i_2 \neq \dots \neq i_p} \prod_{n=1}^p \mu_{i_n}^{l_n} \, \right] 
& = \sum_{\sigma \in \Sigma_p} \mathbb{E} \left[ \sum_{i_{\sigma(1)}<\,...<i_{\sigma(p)}} 
\prod_{n=1}^p \mu_{i_{\sigma(n)}}^{l_{\sigma(n)}} \right]  \\ & = 
 \sum_{\sigma \in \Sigma_p} \mathbb{E} \left[ \sum_{i_1<\,...<i_p} 
\prod_{n=1}^p \mu_{i_n}^{l_{\sigma(n)}} \right]  \\ \; = 
\sum_{\sigma \in \Sigma_p} \theta^p  \left( \prod_{n=1}^p l_{\sigma(n)}! \right) \, &  \frac{(\theta - 1)!}
{ \left[ \left(  \sum_{n= 1}^p l_{\sigma(n)} \right) + \theta - 1 \right]!} \, \prod_{n=1}^p
 \frac{1}{\sum_{m=n}^p l_
{\sigma(m)}} \\ & = \theta^p  \left( \prod_{n=1}^p l_{n}! \right) \, \frac{(\theta - 1)!}{(\theta + N-1)!}
\, \sum_{\sigma \in \Sigma_p} \frac{1}{ \prod_{n=1}^p \sum_{m=n}^p l_
{\sigma(m)}}. 
\end{align*}
\noindent
Therefore, \eqref{nh28}  and then Proposition \ref{ewens}, is proved if one checks the equality:
$$ \sum_{\sigma \in \Sigma_p} \frac{1}{ \prod_{n=1}^p \sum_{m=n}^p l_
{\sigma(m)}} = \frac{1}{\prod_{n=1}^p l_n}. $$
\noindent
Now, since for all $l > 0$,
$$\frac{1}{l}  = \int_{0}^{\infty} e^{-lx} dx,$$
one deduces that
\begin{align*}
 \sum_{\sigma \in \Sigma_p} \frac{1}{ \prod_{n=1}^p \sum_{m=n}^p l_
{\sigma(m)}}  & = \sum_{\sigma \in \Sigma_p}  \int_{x_1,...,x_p \geq 0} 
e^{- \sum_{n=1}^p x_n \left( \sum_{m=n}^p l_{\sigma(m)} \right) } \prod_{n=1}^p dx_n \\ & = 
\sum_{\sigma \in \Sigma_p}  \int_{x_1,...,x_p \geq 0} 
e^{- \sum_{m=1}^p l_{\sigma(m)} \sum_{n=1}^m x_n} \prod_{n=1}^p dx_n.
\end{align*}
\noindent
By doing the change of variable
$$y_{\sigma(m)} = \sum_{n=1}^m x_n,$$
one obtains:
\begin{align*}
 \sum_{\sigma \in \Sigma_p} \frac{1}{ \prod_{n=1}^p \sum_{m=n}^p l_
{\sigma(m)}}  & = \sum_{\sigma \in \Sigma_p}  \int_{y_1,...,y_p \geq 0} 
e^{ - \sum_{m=1}^p l_{\sigma(m)} y_{\sigma(m)} } \mathds{1}_{y_{\sigma(1)} \leq \, ... \,\leq y_{\sigma(p)}}
 \prod_{n=1}^p dy_n \\ & = \int_{y_1,...,y_p \geq 0} 
e^{ - \sum_{m=1}^p l_{m} y_{m} } \prod_{n=1}^p dy_n   = \frac{1}{\prod_{n=1}^p l_n} 
\end{align*}
\noindent
which completes the proof of Proposition \ref{ewens}. 
\end{proof}
As an illustration, we quickly show how this proposition implies the following almost sure convergence result for relative cycle lengths due to Tsilevich:
\begin{proposition}[Tsilevich \cite{tsi1}] \label{lengths}
Let $(\sigma_N)_{N \geq 1}$ be a virtual permutation following the Ewens probability measure with parameter $\theta$.
One defines the sequence $(\alpha_k^{(N)})_{k \geq 1}$ of normalized lengths of cycles of $\sigma_N$ (i.e. 
lengths divided by $N$), ordered by increasing smallest elements, and completed by zeros. Then, for all 
$k \geq 1$, $\alpha_k^{(N)}$ converges almost surely to a random variable $\alpha_k^{(\infty)}$, 
and $(\alpha_k^{(\infty)})_{k \geq 1}$ follows a GEM distribution of parameter $\theta$. In particular, 
the law of $(\alpha_k^{(N)})_{k \geq 1}$ converges weakly to the GEM($\theta$) distribution. Moreover, if $y_l^{(N)}$ 
denotes the $l$-th largest element of $(\alpha_k^{(N)})_{k \geq 1}$ for all integers $N \geq 1$ and $N= \infty$, 
then $$y_l^{(N)} \underset{N \rightarrow \infty}{\longrightarrow} y_l^{(\infty)}$$ a.s., and 
 $(y_l^{(\infty)})_{l \geq 1}$ follows a Poisson-Dirichlet distribution of parameter $\theta$. In particular, 
the law of $(y_l^{(N)})_{l \geq 1}$ (i.e. the sequence of decreasing normalized lengths of cycles) tends 
to the PD($\theta$) distribution. 
\end{proposition}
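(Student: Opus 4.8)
The plan is to deduce the result from Proposition~\ref{ewens}. Realize $(\sigma_N)_{N\ge1}$ as $\sigma_\infty(\lambda,x)$, where $\lambda$ is Poisson--Dirichlet($\theta$) and, conditionally on $\lambda$, the points $x=(x_k)_{k\ge1}$ are i.i.d. uniform on $E(\lambda)=\bigsqcup_j C_j$. By the construction in Proposition~\ref{ewens}, the cycles of $\sigma_N$ are in bijection with the circles $C_j$ carrying at least one of $x_1,\dots,x_N$, the length of the cycle attached to $C_j$ being $\#\{k\le N:x_k\in C_j\}$, and two indices lie in the same cycle iff the corresponding points lie on the same circle. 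Introduce the discovery process of the sequence $(x_k)$: set $T_1=1$, let $J_m$ be the index of the circle containing $x_{T_m}$, and inductively $T_{m+1}=\min\{k:x_k\notin C_{J_1}\cup\dots\cup C_{J_m}\}$. A short check shows $T_m=\min\{k\ge1:x_k\in C_{J_m}\}$, so that for every $N\ge T_k$ the cycle of $\sigma_N$ that comes $k$-th when cycles are ordered by increasing smallest element is exactly $\{j\le N:x_j\in C_{J_k}\}$; hence $\alpha_k^{(N)}=\tfrac1N\#\{j\le N:x_j\in C_{J_k}\}$ for all such $N$.

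Fix $k\ge1$. If there are fewer than $k$ circles of positive perimeter, then $\sigma_N$ has fewer than $k$ cycles for every $N$ and $\alpha_k^{(N)}=0$ identically; set $\alpha_k^{(\infty)}:=0$. Otherwise, each of the first $k$ positive-perimeter circles is hit with a fixed positive conditional probability at each draw, so by Borel--Cantelli the discovery time $T_k$ is a.s. finite. Conditioning on $\lambda$, on $J_k$, and on the finitely many points drawn up to time $T_k$, the remaining points are still i.i.d. uniform on $E(\lambda)$, so the indicators $\mathds{1}_{x_j\in C_{J_k}}$, $j>T_k$, are i.i.d. Bernoulli$(\lambda_{J_k})$, and the strong law of large numbers gives $\alpha_k^{(N)}\to\lambda_{J_k}$ a.s. Intersecting these events over $k\ge1$, we obtain the a.s. convergence $\alpha_k^{(N)}\to\alpha_k^{(\infty)}$ with $\alpha_k^{(\infty)}:=\lambda_{J_k}$.

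It remains to identify the laws. By construction $(\lambda_{J_k})_{k\ge1}$ is the size-biased permutation of $\lambda$, since each newly discovered circle is chosen with probability proportional to its remaining perimeter; as $\lambda$ is Poisson--Dirichlet($\theta$), its size-biased permutation is GEM($\theta$) (see \cite{pitman}), which gives the first assertion, and the weak convergence of the law of $(\alpha_k^{(N)})_{k\ge1}$ to GEM($\theta$) is then immediate from almost sure convergence. For the decreasing rearrangement, observe that $\sum_k\alpha_k^{(N)}=1$ for every finite $N$ and $\sum_k\alpha_k^{(\infty)}=\sum_j\lambda_j=1$ a.s. (a.s. every circle of positive perimeter is eventually discovered, again by Borel--Cantelli); combined with the coordinatewise convergence just proved, this forces $\alpha^{(N)}\to\alpha^{(\infty)}$ in $\ell^1$. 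Since rearranging into decreasing order is nonexpansive for the $\ell^1$ norm, $y_l^{(N)}\to y_l^{(\infty)}$ a.s. for every $l$, where $(y_l^{(\infty)})_{l\ge1}$ is the decreasing rearrangement of a GEM($\theta$) sequence, i.e. Poisson--Dirichlet($\theta$); the weak convergence statement follows once more from the almost sure one.

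I expect the main obstacle to be the careful handling of the random discovery time $T_k$ when invoking the strong law of large numbers: one must check that conditioning on $T_k$ and on $J_k$ leaves the subsequent points i.i.d. uniform on $E(\lambda)$, so that the relevant indicators really are i.i.d. Bernoulli$(\lambda_{J_k})$. The only other non-formal point is the passage from coordinatewise convergence to convergence of the order statistics $y_l^{(N)}$, which is handled by the equality of total masses together with the $\ell^1$-nonexpansiveness of decreasing rearrangement; everything else is bookkeeping within the coupling of Proposition~\ref{ewens} and the standard identification of the size-biased permutation of PD($\theta$) with GEM($\theta$).
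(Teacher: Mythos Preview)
Your proof is correct and follows essentially the same route as the paper: both realize the virtual permutation via Proposition~\ref{ewens}, apply the strong law of large numbers to the empirical frequencies $\tfrac{1}{N}\sum_{p\le N}\mathds{1}_{x_p\in C_j}$, and identify the limit sequence as the size-biased permutation of a PD($\theta$) sequence, hence GEM($\theta$). Two minor differences are worth noting. First, the paper applies the LLN directly for each \emph{fixed} circle index $j$ and then intersects over $j$; since $J_k$ is a.s.\ some integer, this already gives $\alpha_k^{(N)}\to\lambda_{J_k}$ without any conditioning on the stopping time $T_k$, so the obstacle you anticipate does not arise. Second, for the convergence of the order statistics $y_l^{(N)}$, the paper gives a hands-on argument (truncating to finitely many coordinates and using that the $\alpha_k^{(\infty)}$ are a.s.\ pairwise distinct), whereas your passage through $\ell^1$-convergence via Scheff\'e and the $\ell^1$-nonexpansiveness of decreasing rearrangement is cleaner and avoids that case analysis.
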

\noindent
\begin{proof} Let us construct $(\sigma_N)_{N \geq 1}$ via Proposition \ref{ewens}. Since Proposition
\ref{lengths} is trivial for $\theta = 0$, one can suppose $\theta > 0$. This implies (with the notation of 
Proposition \ref{ewens}) that $\lambda_j > 0$ for all $j \geq 1$, and a.s.,
 there exists $p$ such that $x_p$ lies on the circle $C_j$. We define a sequence $(j_n)_{n \geq 1}$
by the following recursive construction:
\begin{itemize}
\item The index $j_1$ is given by: $x_1 \in C_{j_1}$: 
\item For $n \geq 1$, $j_1,...,j_n$ already defined, $j_{n+1}$ is given by: $x_p \in C_{j_{n+1}}$, where $p$
is the smallest index such that $x_p \notin C_{j_1} \cup \, ... \, \cup C_{j_n}$ (this index a.s. exists).
\end{itemize}
\noindent
It is easy to check that for $k \geq 1$:
$$\alpha_k^{(N)} = \frac{ \left| C_{j_k} \cap \{x_1,...,x_N\} \right| }{N} = \frac{1}{N} \, \sum_{p=1}^N
\mathds{1}_{x_p \in C_{j_k}}.$$
Now, by the law of large numbers, it is almost sure that for all integers $j \geq 1$:
$$ \frac{1}{N} \, \sum_{p=1}^N \mathds{1}_{x_p \in C_{j}} \underset{N \rightarrow\infty}{\longrightarrow} 
\lambda_j.$$
Then, $\alpha_k^{(N)}$ tends almost surely to $\lambda_{j_k}$. Now, by construction of the sequence $(j_k)
_{k \geq 1}$, we see that $(\lambda_{j_k})_{k \geq 1}$ is the classical size-biased reordering 
of $(\lambda_j)_{j\geq 1}$, and hence a GEM process of parameter $\theta$. Now, it is obvious 
that $(y_l^{(\infty)})_{l \geq 1}$ is the decreasing reordering of $(\lambda_{j_k})_{k \geq 1}$, i.e. 
the PD($\theta$) process $(\lambda_l)_{l \geq 1}$. It remains to prove that $y_l^{(N)} \rightarrow \lambda_l$
almost surely. Indeed, if $l$ is fixed, there exists a.s. a (random) index $N_0 > l$ such that:
$$\sum_{k=1}^{N_0} \alpha_k^{(\infty)} > 1 - \lambda_l/2.$$
Since $\alpha_k^{(N)} \underset{N \rightarrow \infty}{\longrightarrow}
 \alpha_k^{(\infty)}$ for all $k \geq 1$, there exists a.s. a (random) $N_1$ such that for $N \geq N_1$:
$$\sum_{k=1}^{N_0} \alpha_k^{(N)} > 1 - \lambda_l/2.$$
Since the numbers $(\alpha_k^{(\infty)})_{k \geq 1}$ are a.s. pairwise distinct, there exists a.s. 
$N_2$ such that if $N \geq N_2$, the order of $(\alpha_k^{(N)})_{1 \leq k \leq N_0}$ is the same as
the order of $(\alpha_k^{(\infty)})_{1 \leq k \leq N_0}$. In particular (recall that $N_0 > l$), the 
$l$-th largest element of $(\alpha_k^{(N)})_{1 \leq k \leq N_0}$ has the same index $r$ as the $l$-th 
largest element of $(\alpha_k^{(\infty)})_{1 \leq k \leq N_0}$. Now, since $\alpha_k^{(\infty)} < \lambda_l/2$
for all $k > N_0$, $\alpha_r^{(\infty)}$ is also the $l$-th largest element of  
$(\alpha_k^{(\infty)})_{k \geq 1}$, i.e. $\lambda_l$. For $N \geq \sup (N_1,N_2)$, $\alpha_r^{(N)}$ is 
the $l$-th largest element of $(\alpha_k^{(N)})_{1 \leq k \leq N_0}$ and $\alpha_k^{(N)} < \lambda_l/2$
for $N > N_0$, hence the $l$-th largest element $y_l^{(N)}$ of $(\alpha_k^{(N)})_{k \geq 1}$ 
is included in the interval $[\alpha_r^{(N)}, \alpha_r^{(N)} \vee \lambda_l/2]$. Since $\alpha_r^{(N)}
\underset{N \rightarrow \infty}{\longrightarrow} \alpha_r^{(\infty)} = \lambda_l$, Proposition \ref{lengths}
is proved.
\end{proof}

\section{The "non-unitary case"}
\subsection{The normalized and non-normalized empirical eigenvalues distributions}
Let $M$ be a matrix in $\mathcal{G} (N)$ for some $N \geq 1$. We associate with the point process of the eigenvalues of $M$ 
the finite measure $\mu(M)$ on $\mathbb{C}$ defined by
$$\mu(M) := \sum_{\lambda \in E(M)} m_M(\lambda) \, \delta_{\lambda},$$
where $E(M)$ is the set of eigenvalues of $M$, $m_M(\lambda)$ is the multiplicity of $\lambda$ as 
an eigenvalue of $M$, and $\delta_{\lambda}$ is Dirac measure at $\lambda$. 
By the general description of eigenvalues given in Section \ref{sec::1.2}, one has, for all
$\sigma \in \Sigma_N$, $z_1,...,z_N \in \mathbb{C}^*$:
$$\mu(M(\sigma, z_1,...,z_N)) = \sum_{m=1}^n \sum_{\omega^{l_m}= Z_m} \delta_{\omega},$$
where $l_1,l_2,...,l_n$ are the lengths of the cycles $C_1,C_2...,C_n$ of $\sigma$, and for $1 \leq m \leq n$:
$$Z_m = \prod_{j \in C_m} z_j.$$
Let us now suppose that the distribution of a sequence of random matrices $(M_N)_{N \geq 1}$,
$M_N \in \mathcal{G}(N)$, is of
the form $\mathbb{P}(\infty, \theta, \mathcal{L})$. One has $M_N = M( \sigma_N, z_1,...,z_N)$
where $(\sigma_N)_{N \geq 1}$ follows the Ewens($\theta$) distribution, and is independent of 
the sequence $(z_j)_{j \geq 1}$ of i.i.d. variables, which have law $\mathcal{L}$. 
Since for all $N \geq 1$, the cycle structure of $\sigma_{N}$ can be deduced from the 
cycle structure of $\sigma_{N+1}$ by removing $N+1$, there exists a partition $\Pi$ of $\mathbb{N}^*$
such that for all $N$, the supports of the cycles of $\sigma_N$ are obtained by intersecting the sets of 
$\Pi$ with $\{1,...N\}$. Moreover, under the Ewens($\theta$) measure, $\Pi$
contains a.s. an infinite number of sets (see Section \ref{sec::2}): let us 
order them by increasing smallest elements, and denote them by $(C_m)_{m \geq 1}$.
One then has
\begin{equation}
\mu(M_N) = \sum_{m=1}^{\infty} \, \mathds{1}_{l_{N,m} > 0} \sum_{\omega^{l_{N,m}} = Z_{N,m}}
 \delta_{\omega}, \label{muMN}
\end{equation}
where $l_{N,m}$ is the cardinality of $C_{N,m}$, the intersection  of $C_m$ 
and $\{1,...,N\}$, and $$Z_{N,m} = \prod_{j \in C_{N,m}} z_j.$$
The natural question one can now ask is the behaviour of the measure $\mu(M_N)$ for large $N$. 
Since each cycle of $\sigma_N$ gives a number of eigenvalues equal to its length, one can expect that 
$\mu(M_N)$ is dominated by the large cycles of $M_N$. Moreover, the $l$
eigenvalues corresponding to a cycle of length $l$ form a regular polygon of order $l$, and the 
distance of their vertices to the origin is equal the the $l$-th root of the product of $l$ i.i.d. random 
variables of law $\mathcal{L}$. If $l$ is large and if one can apply a multiplicative version of the 
law of large numbers, one can expect that this distance does not vary too much.
Then, it is natural to guess that under some well-chosen conditions on $\mathcal{L}$, the measure $\mu(M_N)$,
which has total mass $N$, is close to $N$ times the uniform measure on a circle centered at the origin.
Indeed, we can prove the following statement:
\begin{proposition} \label{empirical}
Let $(M_N)_{N \geq 1}$ be a sequence of matrices following the law $\mathbb{P}(\infty, \theta, \mathcal{L})$
for some $\theta > 0$ and some probability $\mathcal{L}$ on $\mathbb{C}^*$. We suppose that if $Z$ is a random 
variable which follows the distribution $\mathcal{L}$, then $\log(|Z|)$ is integrable. Under these assumptions, 
almost surely, the probability measure $\mu(M_N)/N$ converges weakly to the uniform distribution on 
the circle of center zero and radius $\exp \left( \mathbb{E}[\log (|Z|)] \right)$.
\end{proposition}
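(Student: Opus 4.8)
The plan is to use the explicit description \eqref{muMN} of $\mu(M_N)$ together with the almost sure convergence of the relative cycle lengths from Proposition \ref{lengths} and a multiplicative law of large numbers applied along each cycle. Write $r := \exp(\mathbb{E}[\log|Z|])$ and let $\nu_r$ denote the uniform probability measure on the circle $\{|w| = r\}$. It suffices to show that for every Lipschitz bounded test function $f : \mathbb{C} \to \mathbb{R}$ one has $\tfrac{1}{N}\int f\, d\mu(M_N) \to \int f\, d\nu_r$ almost surely. The contribution of the cycle $C_{N,m}$ (of length $l_{N,m}$) to $\tfrac{1}{N}\int f\,d\mu(M_N)$ is $\tfrac{l_{N,m}}{N}\cdot A_{N,m}$, where $A_{N,m} := \tfrac{1}{l_{N,m}}\sum_{\omega^{l_{N,m}} = Z_{N,m}} f(\omega)$ is the average of $f$ over a regular $l_{N,m}$-gon whose vertices lie on the circle of radius $|Z_{N,m}|^{1/l_{N,m}}$. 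The key point is that $\tfrac{l_{N,m}}{N} = \alpha_m^{(N)} \to \alpha_m^{(\infty)}$ a.s. by Proposition \ref{lengths}, so the cycles with $\alpha_m^{(\infty)} > 0$ are exactly the ones that matter.

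First I would handle a single (large) cycle. Fix $m$ with $\alpha_m^{(\infty)} > 0$; then $l_{N,m} \to \infty$ a.s. Since $C_m$ is a fixed infinite subset of $\mathbb{N}^*$ and the $z_j$ are i.i.d.\ with $\log|Z|$ integrable, the strong law of large numbers gives $\tfrac{1}{l_{N,m}}\sum_{j \in C_{N,m}} \log|z_j| \to \mathbb{E}[\log|Z|]$ a.s., i.e.\ $|Z_{N,m}|^{1/l_{N,m}} \to r$ a.s. Next, for a fixed center radius $\rho$ the uniform measure on the regular $n$-gon with vertices $\rho\, e^{i\phi} \zeta$, $\zeta^n = 1$, converges weakly to $\nu_\rho$ as $n \to \infty$, uniformly in the rotation $\phi$ (this is an elementary Riemann-sum estimate for Lipschitz $f$, with error $O(\|f\|_{\mathrm{Lip}}\,\rho/n)$). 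Combining the two, and using that $f$ is Lipschitz to absorb the discrepancy between radius $|Z_{N,m}|^{1/l_{N,m}}$ and $r$, one gets $A_{N,m} \to \int f\, d\nu_r$ a.s.\ for each such $m$. Hence, for every fixed $m_0$,
\[
\sum_{m=1}^{m_0} \alpha_m^{(N)} A_{N,m} \;\xrightarrow[N\to\infty]{}\; \Big(\sum_{m=1}^{m_0} \alpha_m^{(\infty)}\Big) \int f\, d\nu_r \quad \text{a.s.}
\]

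The remaining step is to control the tail $\sum_{m > m_0} \alpha_m^{(N)} A_{N,m}$ uniformly in $N$. Since $|A_{N,m}| \le \|f\|_\infty$, this is bounded by $\|f\|_\infty \sum_{m > m_0} \alpha_m^{(N)} = \|f\|_\infty\big(1 - \sum_{m \le m_0}\alpha_m^{(N)}\big)$. Because $(\alpha_m^{(N)})_m$ sums to $1$ for every $N$ and each term converges a.s., and because $\sum_m \alpha_m^{(\infty)} = 1$ a.s.\ (the GEM weights sum to one), given $\varepsilon > 0$ we may pick $m_0$ with $\sum_{m \le m_0}\alpha_m^{(\infty)} > 1 - \varepsilon$, and then $\limsup_N \sum_{m > m_0}\alpha_m^{(N)} \le \varepsilon$ a.s. Feeding this and the displayed limit into a standard $3\varepsilon$-argument yields $\tfrac{1}{N}\int f\, d\mu(M_N) \to \int f\, d\nu_r$ a.s.; taking a countable dense (in the sup-and-Lipschitz sense) family of test functions and a common null set gives the weak convergence $\mu(M_N)/N \to \nu_r$ almost surely.

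I expect the main obstacle to be the tail-uniformity step: one must ensure that no mass escapes to the small cycles, i.e.\ that $\sum_{m \le m_0}\alpha_m^{(N)}$ is close to $1$ for all large $N$ simultaneously with the null sets from all the single-cycle arguments being jointly negligible. This is exactly the kind of control that Proposition \ref{lengths} is designed to provide (it is the same mechanism used there to pass from $\alpha_k^{(N)}$ to the ordered $y_l^{(N)}$), so it should go through, but it is the place where the argument genuinely uses the virtual-permutation coupling rather than just convergence in law. A secondary, purely technical point is the uniformity in the rotation angle $\phi$ in the $n$-gon Riemann-sum estimate, which is needed because the argument of $Z_{N,m}$ is itself random and varying with $N$; this is handled by the Lipschitz bound and causes no real difficulty.
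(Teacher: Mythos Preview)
Your argument is correct. The overall architecture---write $\tfrac1N\int f\,d\mu(M_N)=\sum_m \alpha_m^{(N)}A_{N,m}$, show $A_{N,m}\to\int f\,d\nu_r$ cycle by cycle via the strong law of large numbers plus a Riemann-sum bound, then control the tail in $m$---is exactly the paper's strategy as well. The differences are in the implementation, and yours is genuinely more elementary in two places.

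First, by restricting to bounded Lipschitz test functions (which suffice for weak convergence of probability measures) you get the $n$-gon-to-circle estimate with error $O(\|f\|_{\mathrm{Lip}}\rho/n)$, uniform in the rotation, for free. The paper works with arbitrary continuous bounded $f$ and therefore has to go through a modulus-of-continuity argument (its inequality~\eqref{1}) with an auxiliary parameter $\epsilon$ and a split into the regimes $l\le 2\pi R/\epsilon$ and $l>2\pi R/\epsilon$.

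Second, and more interestingly, your tail control is simpler: you use only $\sum_m \alpha_m^{(N)}=1$, the termwise a.s.\ convergence $\alpha_m^{(N)}\to\alpha_m^{(\infty)}$, and $\sum_m\alpha_m^{(\infty)}=1$. The paper instead proves the stronger fact $\sum_m s_m<\infty$ a.s., where $s_m=\sup_N \alpha_m^{(N)}$, via a submartingale bound and Doob's inequality yielding $\sum_m\mathbb{E}[s_m]<\infty$ (its estimate~\eqref{sm}), and then applies dominated convergence with dominant $s_m$. For the present proposition your route is cleaner; the trade-off is that the paper's estimate~\eqref{sm} is a reusable tool invoked repeatedly in later results (Propositions~\ref{Cauchy}, \ref{alpha<1}, \ref{conv1}, \ref{conv2}, \ref{bartau}, \ref{tildetau}), so establishing it here is not wasted effort in the paper's economy.

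Two minor remarks. Your worry in the last paragraph is unfounded: the single-cycle null sets are countably many (one per $m$), so they combine with the null set for $\sum_m\alpha_m^{(\infty)}=1$ without difficulty, and the random choice of $m_0(\omega,\varepsilon)$ is harmless since all the needed convergences already hold for every $m$ on the common full-measure set. And your final step---passing from a fixed $f$ to a.s.\ weak convergence via a countable convergence-determining family---is correct as stated.
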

\begin{proof}
Let $f$ be a continuous and bounded function from $\mathbb{C}$ to $\mathbb{R}$, and let 
$R > 0$. Then, there exists a constant $A > 0$, and a function $\alpha$ from $(0,R)$ to $\mathbb{R}_+$, tending
 to zero at zero, such that for all $\epsilon \in (0,R)$, for all integers $l \geq 1$,
and for all $z \in \mathbb{C}$ such that $|z|^{1/l} \in (R- \epsilon, R + \epsilon)$:
\begin{equation}
\left| \sum_{\omega^l = z} f(\omega) - \frac{l}{2\pi}  \, \int_0^{2 \pi} f(R \, e^{i \lambda}) d \lambda \right| 
\leq \frac{A}{\epsilon} + l \alpha(\epsilon). \label{1}
\end{equation}
Indeed, let us define for all $\delta > 0$: 
$$\eta( \delta) := \sup \{ |f(y)-f(y')|, |y-y'| \leq \delta, |y|, |y'| \leq 2R \},$$
which tends to zero with $\delta$ since $f$ is uniformly continuous on any compact set. 
With this definition, we obtain:
$$\left| \sum_{\omega^l = z} f(\omega) - \sum_{\omega^l = z'} f(\omega) \right| \leq l \eta(\epsilon),$$ 
where $z' := z R^l/|z|$ has modulus $R^l$. Now, there exists $\lambda \in [0, 2 \pi/l)$ such 
that:
$$\sum_{\omega^l = z'} f(\omega) = \sum_{\omega^l = 1} f(R \omega \, e^{i \lambda}) =: \Phi(\lambda)$$
One has, for all $\lambda, \lambda' \in [0, 2\pi/l)$:
$$|\Phi(\lambda) - \Phi(\lambda')| \leq l \eta( R |\lambda - \lambda'|) \leq l \eta( 2 \pi R/l).$$
Moreover
$$\int_0^{2\pi/l} \Phi (\lambda) d \lambda= \int_0^{2\pi} f(R \, e^{i \lambda}) d \lambda,$$
and then, for all $\lambda \in [0, 2\pi/l)$,
$$ \left| \Phi(\lambda) - \frac{l}{2 \pi} \, \int_0^{2 \pi} f(R \, e^{i \lambda}) d \lambda\right| 
\leq l \eta( 2 \pi R/l),$$
which implies:
$$\left| \sum_{\omega^l = z} f(\omega) - \frac{l}{2\pi}  \, \int_0^{2 \pi} f(R \, e^{i \lambda})d \lambda \right| 
\leq l \left[ \eta(\epsilon) + \eta (2 \pi R/l) \right].$$
If $l \leq 2 \pi R/ \epsilon$, one can majorize this quantity by $ 4 \pi R \eta(2 \pi R) / \epsilon $, and
 if $l \geq 2 \pi R/ \epsilon$, one can majorize it by $2l \eta (\epsilon)$. Hence we obtain \eqref{1}. 
 
Since for $B> 0$ depending only on $f$, the left-hand side of \eqref{1} can be trivially majorized by $Bl$
 for any $z \in \mathbb{C}$, we deduce, for $\mu$ equal to the uniform measure on the circle of radius $R$:
$$ \left|\frac{1}{N} \int_{\mathbb{C}} f \, d \mu(M_N) - \int_{\mathbb{C}} f \, d \mu  \right|
\leq \sum_{m=1}^{\infty} \, \mathds{1}_{l_{N,m} > 0}
 \left[ \frac{B \, l_{N,m}}{N} \, \mathds{1}_{|Z_{N,m}|^{1/l_{N,m}} \notin (R-\epsilon, R+ \epsilon)}
+ \frac{A}{N \epsilon} + \frac{l_{N,m}}{N} \, \alpha(\epsilon)\right]$$
For now, let us take $R:= \exp \left( \mathbb{E}[\log (|Z|)] \right)$, where $Z$ is a random variable 
following the law $\mathcal{L}$. By the strong law of large numbers applied to the sequence $(\log |z_j|)_{j \in
C_{m}}$, it is not difficult to check that a.s., for all $m \geq 1$:
$$\frac{B \, l_{N,m}}{N} \, \mathds{1}_{|Z_{N,m}|^{1/l_{N,m}} \notin (R-\epsilon, R+ \epsilon)}$$
tends to zero when $N$ goes to infinity. Moreover, independently of $N$, this quantity is
dominated by $B s_m$, where $s_m$ is the supremum of $l_{N,m}/N$ for $N \geq 1$. For the moment, 
let us assume that a.s.:
\begin{equation}
\sum_{m =1}^{\infty} s_m < \infty \label{2}
\end{equation}
\noindent
In this case, one can apply dominated convergence and obtain:
$$ \sum_{m=1}^{\infty} \, \mathds{1}_{l_{N,m} > 0, |Z_{N,m}|^{1/l_{N,m}} \notin (R-\epsilon, R+ \epsilon)}
  \frac{B \, l_{N,m}}{N} \underset{N \rightarrow \infty}{\longrightarrow} 0.$$
Moreover,
$$\sum_{m=1}^{\infty} \, \mathds{1}_{l_{N,m} > 0} \frac{A}{N \epsilon}
 \underset{N \rightarrow \infty}{\longrightarrow} 0$$
a.s., since the number of cycles of $\sigma_N$ increases slowly with respect to $N$ (the order of magnitude is $\log(N)$),
and 
$$\sum_{m=1}^{\infty} \, \mathds{1}_{l_{N,m} > 0} \frac{l_{N,m}}{N} \, \alpha(\epsilon) = \alpha(\epsilon).$$
 Hence we deduce that
$$ \underset{N \rightarrow \infty}{\lim \sup}
\left|\frac{1}{N} \int_{\mathbb{C}} f \, d \mu(M_N) - \int_{\mathbb{C}} f \, d \mu  \right|
\leq \alpha(\epsilon),$$
and by taking $\epsilon \rightarrow 0$, we are done. It only remains to prove 
\eqref{2}. This relation can be shown by looking carefully at the construction of the Ewens($\theta$) measure
on virtual permutations given in Section \ref{sec::2}. Indeed, for $N \geq 1$, conditionally on 
$(l_{K,m})_{m \geq 1, 1 \leq K \leq N}$, with
 $m_0 := \inf \{m \geq 1, l_{N,m} = 0 \}$, one has $l_{N+1,m} =
 l_{N,m} + \mathds{1}_{m=m_1}$ where $m_1$ is a random index equal to $m'$ with
probability $l_{N,m} / (N+ \theta)$ for $m' < m_0$ and to $m_0$ with
probability $\theta/ (N+ \theta)$. This implies quite easily that $( l_{N,m} /(N+ \theta))_{N \geq 1}$
is a nonnegative submartingale. 
By Doob's inequality one deduces that the expectation of $s_m^2$, is dominated 
by a constant (depending only on $\theta$) times the expectation of $y_m^2$, where $y_m$
 is the limit of $l_{N,m}/N$, which exists almost surely.
Now, $(y_m)_{m \geq 0}$ is a GEM process of parameter $\theta$,
hence the expectation of $y_m^2$ decreases exponentially with $m$. Consequently,
\begin{equation}
\sum_{m = 1}^{\infty} \mathbb{E}[s_m] < \infty, \label{sm}
\end{equation}
\noindent
which implies \eqref{2} almost surely. 
\end{proof}
\noindent
If we do not want to deal with virtual permutations, we can replace the a.s. convergence by a weak convergence
in probability, as follows:
\begin{corollary} \label{empirical2}
Let $(M_N)_{N \geq 1}$ be a sequence of matrices such that $M_N \in \mathcal{G}(N)$ follows the distribution
 $\mathbb{P}(N, \theta, \mathcal{L})$
for some $\theta > 0$ and some probability $\mathcal{L}$ on $\mathbb{C}^*$. We suppose that if $Z$ is a random 
variable which follows the distribution $\mathcal{L}$, then $\log(|Z|)$ is integrable. Under these assumptions, 
the probability measure $\mu(M_N)/N$ converges weakly in probability to the uniform distribution on 
the circle of center zero and radius $R := \exp \left( \mathbb{E}[\log (|Z|)] \right)$, i.e. for all continuous, 
bounded functions $f$ from $\mathbb{C}$ to $\mathbb{R}$:
$$\frac{1}{N} \, \int_{\mathbb{C}} f \, d \mu(M_N) \underset{N \rightarrow \infty}{\longrightarrow} 
\frac{1}{2 \pi} \, \int_0^{2 \pi} f(Re^{i \lambda}) \, d \lambda $$
in probability. 
\end{corollary}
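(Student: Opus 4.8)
The plan is to derive Corollary~\ref{empirical2} directly from Proposition~\ref{empirical}, using only two elementary observations: that almost sure convergence implies convergence in probability, and that the convergence in probability of a real-valued functional of $M_N$ towards a deterministic constant depends on the law of $M_N$ alone, not on any coupling of the matrices of different dimensions.

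Concretely, fix a continuous bounded function $f : \mathbb{C} \to \mathbb{R}$, write $R := \exp\left( \mathbb{E}[\log(|Z|)] \right)$ where $Z$ has law $\mathcal{L}$, and set
\[
c_f := \frac{1}{2\pi} \int_0^{2\pi} f(R e^{i \lambda}) \, d \lambda .
\]
First I would note that $X_N := \frac{1}{N} \int_{\mathbb{C}} f \, d\mu(M_N)$ is a measurable function of the matrix $M_N$ alone, since it is determined by the cycle lengths of $\sigma_N$ together with the products $Z_{N,m}$, hence by $M_N$ itself. Next, consider a sequence $(M_N)_{N \geq 1}$ distributed according to $\mathbb{P}(\infty, \theta, \mathcal{L})$, which is legitimate since $\log |Z|$ is integrable; Proposition~\ref{empirical} then shows that $\mu(M_N)/N$ converges weakly, almost surely, to the uniform measure on the circle of radius $R$. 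In particular $X_N \to c_f$ almost surely under $\mathbb{P}(\infty, \theta, \mathcal{L})$, and hence also in probability: for every $\varepsilon > 0$, the $\mathbb{P}(\infty, \theta, \mathcal{L})$-probability of the event $\{ |X_N - c_f| > \varepsilon \}$ tends to zero as $N \to \infty$.

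Finally I would invoke the remark made just after the definition of $\mathbb{P}(\infty, \theta, \mathcal{L})$: its image under the $N$-th coordinate projection is exactly $\mathbb{P}(N, \theta, \mathcal{L})$. Since $X_N$ depends only on the $N$-th coordinate, it has the same law under $\mathbb{P}(\infty, \theta, \mathcal{L})$ and under $\mathbb{P}(N, \theta, \mathcal{L})$, so the probability of $\{ |X_N - c_f| > \varepsilon \}$ also tends to zero under $\mathbb{P}(N, \theta, \mathcal{L})$. As $f$ and $\varepsilon$ were arbitrary, this is precisely the asserted weak convergence in probability of $\mu(M_N)/N$ to the uniform distribution on the circle of center zero and radius $R$. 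I expect no genuine obstacle here: all the analytic work is already contained in Proposition~\ref{empirical}, and the only point requiring a little care is that convergence in probability towards a constant is a statement about one-dimensional marginals, which is exactly why the virtual-permutation coupling can be dispensed with.
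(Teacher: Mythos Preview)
Your argument is correct and is precisely the reasoning the paper has in mind: the corollary is stated immediately after Proposition~\ref{empirical} without a separate proof, so the intended justification is exactly this passage from almost sure convergence under the coupling $\mathbb{P}(\infty,\theta,\mathcal{L})$ to convergence in probability, combined with the fact that the $N$-th marginal of $\mathbb{P}(\infty,\theta,\mathcal{L})$ is $\mathbb{P}(N,\theta,\mathcal{L})$ and that convergence in probability to a deterministic limit depends only on one-dimensional marginals.
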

This convergence result means that most
 of the eigenvalues of a matrix $M_N$ following $\mathbb{P} (N, \theta, \mathcal{L})$ 
are concentrated around the circle of radius $\exp \left( \mathbb{E}[\log (|Z|)] \right)$.
Now, even for $N$ large, there remain some eigenvalues which are far from this circle, since the law of large 
numbers involved in the proof of Proposition \ref{empirical} does not apply for the small cycles of the permutation
$\sigma_N$ associated with $M_N$. In order to study the influence of the small cycles, let us suppose
that $(M_N)_{N \geq 1}$ follows the measure $\mathbb{P}(\infty, \theta, \mathcal{L})$ (which is possible since
the image of $\mathbb{P}(\infty, \theta, \mathcal{L})$ by the $N$-th coordinate is $\mathbb{P}(N, \theta, 
\mathcal{L})$). Then, one can write the measure $\mu(M_N)$ in the following way:
$$\mu(M_N) = \sum_{k=1}^{\infty} \sum_{m \geq 1, l_{N,m} = k}  \sum_{\omega^k = Z_{N,m}} \delta_{\omega},$$
with the same notation as in equation \eqref{muMN}. This equality implies the following equality in
distribution:
\begin{equation}
\mu(M_N) = \sum_{k=1}^{\infty} \sum_{p = 1}^{a_{N,k}} \sum_{\omega^k = T_{k,p}} \delta_{\omega}, \label{muMN2}
\end{equation}
where for all $k \geq 1$, $a_{N,k}$ is the number of $k$-cycles of the permutation $\sigma_N$ (which follows
the Ewens($\theta$) measure on $\Sigma_N$), where for $k, p \geq 1$, the law of $T_{k,p}$ is
the multiplicative convolution of $k$ copies of the distribution $\mathcal{L}$, and where
$(a_{N,k})_{k \geq 1}$ and the variables $T_{k,p}$, $k, p \geq 1$ are independent. 
Now, the finite dimensional marginals of $(a_{N,k})_{k \geq 1}$ converge, in distribution, to the corresponding
marginales of $(a_k)_{k \geq 1}$, where the variables $a_k$ are independent Poisson random variables, 
with $\mathbb{E}[a_k] = \theta/k$ (see for instance \cite{ABT}). One can then expect that
in a sense which needs to be made precise, the law of $\mu(M_N)$ converges to the distribution
of
\begin{equation}
\mu_{\infty} :=  \sum_{k=1}^{\infty} \sum_{p=1}^{a_k} \sum_{\omega^k = T_{k,p}} \delta_{\omega}, \label{muinfty}
\end{equation}
where all the variables $a_k$ and $T_{k,p}$ in sight are  independent. 
Of course, one needs to be carefull, because the measure $\mu_{\infty}$ has an infinite total mass, which, 
under the assumptions of Proposition \ref{empirical}, is expected to concentrate around
 the circle of radius $\exp \left( \mathbb{E}[\log (|Z|)] \right)$. One also remarks that the
convergence expected here is very different from the convergence proved in Proposition \ref{empirical}; in 
particular, it involves the measure $\mu(M_N)$ and not the probability $\mu(M_N)/N$. 
In order to state a rigorously our result, let us give the following definition:
\begin{definition}
Let $X$ be a real, integrable, random variable. For $q > 0$, we say that $X$ is in $\mathcal{U}^q$ if and only if
for a sequence $(X_k)_{k \geq 1}$ of i.i.d. random variables with the same distribution as $X$ and for all
$\epsilon > 0$, there exists $C > 0$ such that for all $n \geq 1$:
$$\mathbb{P} \left[ \left| \left(\frac{1}{n} \sum_{k=1}^{n} X_k \right)  - \mathbb{E} [X] \right| \geq \epsilon
\right] \leq \frac{C}{n^q}. $$
\end{definition}
\begin{remark}
If $q \geq 1$ is an integer, by expanding $$\mathbb{E} \left[ \left( \frac{1}{n} \sum_{k=1}^{n} (X_k -
 \mathbb{E}[X])  \right)^{2q} \right] 
$$ and by using Markov's inequality, one easily proves that a random variable in $L^{2q}$ is also 
in $\mathcal{U}^q$.  
\end{remark}
\noindent
We can now state a result about the convergence of $\mu(M_N)$.
\begin{proposition} \label{feller}
Let $(M_N)_{N \geq 1}$ be a sequence of matrices such that $M_N \in \mathcal{G}(N)$ follows the distribution
 $\mathbb{P}(N, \theta, \mathcal{L})$
for some $\theta > 0$ and some probability $\mathcal{L}$ on $\mathbb{C}^*$. We suppose that if $Z$ is a random 
variable following the distribution $\mathcal{L}$, then $\log(|Z|)$ is in $\mathcal{U}^q$ for some $q > 0$. Under 
these 
assumptions, for all bounded, continuous functions $f$ from $\mathbb{C}$ to $\mathbb{R}$, such 
that $f= 0$ on a neighborhood of the circle $|z| = R$, where $R:= \exp \left( \mathbb{E}[\log (|Z|)] \right)$, 
 $f$ is a.s. integrable with respect to $\mu_{\infty}$,
where the random measure $\mu_{\infty}$ is given by \eqref{muinfty}. Moreover, the following
convergence in distribution holds for such test functions $f$:
$$\int_{\mathbb{C}} f \, d \mu(M_N) \underset{N \rightarrow \infty}{\longrightarrow} \int_{\mathbb{C}}
f \, d \mu_{\infty}.$$
\end{proposition}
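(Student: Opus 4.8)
The plan is to use the distributional identity \eqref{muMN2}, to truncate both $\mu(M_N)$ and $\mu_\infty$ at cycle length $b$, to let $N\to\infty$ at fixed $b$ using the convergence of the Ewens cycle counts, and then to let $b\to\infty$, the contribution of the long cycles being controlled by the hypothesis $\log|Z|\in\mathcal{U}^q$. Fix $\delta>0$ so small that $f$ vanishes on the annulus $\{R-\delta<|z|<R+\delta\}$. Since the $k$ roots $\omega$ of $T_{k,p}$ all have modulus $|T_{k,p}|^{1/k}=\exp\!\big(\tfrac1k\sum_{i=1}^{k}\log|z_i|\big)$, the inner sum $\sum_{\omega^k=T_{k,p}}f(\omega)$ vanishes unless the event $B_{k,p}:=\left\{|T_{k,p}|^{1/k}\notin(R-\delta,R+\delta)\right\}$ occurs, and in all cases $\big|\sum_{\omega^k=T_{k,p}}f(\omega)\big|\le k\,\|f\|_\infty\,\mathds{1}_{B_{k,p}}$. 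Since $u\mapsto e^u$ is continuous with $e^{\log R}=R$, there is $\epsilon'=\epsilon'(\delta,R)>0$ with $B_{k,p}\subseteq\big\{\,\big|\tfrac1k\sum_{i=1}^{k}\log|z_i|-\log R\big|\ge\epsilon'\,\big\}$, so that $\mathbb{P}[B_{k,p}]\le C_1\,k^{-q}$ for a constant $C_1$ depending only on $\delta,R,\mathcal{L},q$, by the definition of $\mathcal{U}^q$.

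I would first settle the a.s.\ $\mu_\infty$-integrability of $f$. Conditioning on the (independent, Poisson) counts $(a_k)$ and applying a union bound,
$$\mathbb{P}\big[\,\exists\,k>b,\ 1\le p\le a_k:\ B_{k,p}\,\big]\ \le\ \sum_{k>b}\mathbb{E}[a_k]\,\mathbb{P}[B_{k,1}]\ \le\ \theta\,C_1\sum_{k>b}k^{-1-q},$$
which tends to $0$ as $b\to\infty$. Hence a.s.\ only finitely many pairs $(k,p)$ satisfy $B_{k,p}$; since each $a_k$ is a.s.\ finite and each inner sum $\sum_{\omega^k=T_{k,p}}f(\omega)$ is a finite number, both $\int f\,d\mu_\infty$ and $\int|f|\,d\mu_\infty$ are a.s.\ finite sums, so $f$ is a.s.\ $\mu_\infty$-integrable. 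Set $X_\infty:=\int f\,d\mu_\infty$ and $X_\infty^{(b)}:=\sum_{k\le b}\sum_{p=1}^{a_k}\sum_{\omega^k=T_{k,p}}f(\omega)$; the same fact shows $R_\infty^{(b)}:=X_\infty-X_\infty^{(b)}$ vanishes a.s.\ once $b$ exceeds the (a.s.\ finite) largest length of a cycle on which some $B_{k,p}$ holds, so $X_\infty^{(b)}\to X_\infty$ a.s.\ as $b\to\infty$.

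For the convergence in law, realize $X_N:=\int f\,d\mu(M_N)$ through \eqref{muMN2}, and set $X_N^{(b)}:=\sum_{k\le b}\sum_{p=1}^{a_{N,k}}\sum_{\omega^k=T_{k,p}}f(\omega)$, $R_N^{(b)}:=X_N-X_N^{(b)}$. At fixed $b$, $X_N^{(b)}$ is a measurable function of the cycle-count vector $(a_{N,k})_{1\le k\le b}$ and of the i.i.d.\ family $(T_{k,p})_{k\le b}$, which is independent of the counts and does not vary with $N$, and this function is continuous in the $T_{k,p}\in\mathbb{C}^*$; since $(a_{N,k})_{1\le k\le b}$ converges in distribution to $(a_k)_{1\le k\le b}$ (see \cite{ABT}), the continuous mapping theorem gives $X_N^{(b)}\Rightarrow X_\infty^{(b)}$ as $N\to\infty$. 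Now for any bounded Lipschitz $\phi:\mathbb{R}\to\mathbb{R}$ one has $\big|\mathbb{E}\phi(X_N)-\mathbb{E}\phi(X_N^{(b)})\big|\le 2\|\phi\|_\infty\,\mathbb{P}[R_N^{(b)}\ne 0]$, while $\mathbb{E}\phi(X_N^{(b)})\to\mathbb{E}\phi(X_\infty^{(b)})$ as $N\to\infty$ and $\mathbb{E}\phi(X_\infty^{(b)})\to\mathbb{E}\phi(X_\infty)$ as $b\to\infty$; combining these via a triple-limit estimate, $X_N\Rightarrow X_\infty$ will follow once
$$\lim_{b\to\infty}\ \limsup_{N\to\infty}\ \mathbb{P}\big[R_N^{(b)}\ne 0\big]=0.$$

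This last display is the main obstacle, for it requires a bound on the Ewens cycle counts $a_{N,k}$ that is uniform in $N$ and summable in $k$, whereas for $\theta<1$ the quantity $k\,\mathbb{E}[a_{N,k}]=\theta\prod_{j=0}^{k-1}\frac{N-j}{\theta+N-1-j}$ is \emph{not} bounded in $N$ when $k$ is of order $N$. I would proceed by splitting the relevant sum at $k=N/2$. Since $\{R_N^{(b)}\ne 0\}\subseteq\{\exists\,k>b,\ 1\le p\le a_{N,k}:\ B_{k,p}\}$ and $(a_{N,k})_k$ is independent of the $T_{k,p}$, a union bound gives $\mathbb{P}[R_N^{(b)}\ne 0]\le C_1\sum_{k>b}k^{-q}\,\mathbb{E}[a_{N,k}]$. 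For $b<k\le N/2$, each factor of the product above is $\le 1$ when $\theta\ge 1$, while when $\theta<1$ the product is $\le\exp\!\big((1-\theta)\sum_{v=N-k}^{N-1}\tfrac1v\big)\le\exp(1-\theta)$ since $\sum_{v=N-k}^{N-1}\tfrac1v\le\tfrac{k}{N-k}\le 1$; in either case $\mathbb{E}[a_{N,k}]\le g(\theta)/k$ with $g(\theta)$ independent of $N$, so this part of the sum is $\le g(\theta)\,C_1\sum_{k>b}k^{-1-q}$, which tends to $0$ as $b\to\infty$ uniformly in $N$. For $N/2<k\le N$, a permutation of $\{1,\dots,N\}$ has at most one cycle of length $>N/2$, so $\mathbb{E}\big[\sum_{N/2<k\le N}a_{N,k}\big]\le 1$, whereas $\mathbb{P}[B_{k,1}]\le C_1(2/N)^q$ there, so this part is $\le C_1\,2^q\,N^{-q}\to 0$ as $N\to\infty$. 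Hence $\limsup_{N\to\infty}\mathbb{P}[R_N^{(b)}\ne 0]\le g(\theta)\,C_1\sum_{k>b}k^{-1-q}$, which tends to $0$ as $b\to\infty$; this is what remained to be proved.
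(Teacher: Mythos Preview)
Your proof is correct and takes a genuinely different route from the paper's. The paper introduces the Feller coupling: it builds auxiliary counts $b_{N,k}$ and $b_k$ from an i.i.d.\ Bernoulli sequence so that $(b_{N,k})_k\stackrel{d}{=}(a_{N,k})_k$ and $(b_k)_k\stackrel{d}{=}(a_k)_k$, then writes $b_{N,k}=c_{N,k}+$(one boundary term), where $c_{N,k}\nearrow b_k$ almost surely. This turns the problem into an almost sure convergence $\int f\,d\nu'_N\to\int f\,d\nu_\infty$ plus control of a single ``edge'' cycle $\nu''_N$, for which one only needs $\mathbb{P}[|T_{k_0,\cdot}|^{1/k_0}\notin(R_1,R_2)]\to 0$ as $k_0\to\infty$. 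By contrast, you work directly with the distributional identity \eqref{muMN2}, truncate at cycle length $b$, and use the weak convergence $(a_{N,k})_{k\le b}\Rightarrow(a_k)_{k\le b}$ together with independence of the $T_{k,p}$'s to pass to the limit in the truncated piece; the price is that you must control the remainder \emph{uniformly in $N$}, which forces the explicit estimate $\mathbb{E}[a_{N,k}]\le g(\theta)/k$ for $k\le N/2$ and the pigeonhole bound $\sum_{k>N/2}a_{N,k}\le 1$. Your approach is more elementary in that it avoids the Feller coupling entirely and relies only on the classical convergence of cycle counts; the paper's approach is slicker in that the coupling converts everything to a.s.\ convergence and sidesteps any uniform-in-$N$ moment estimate on the $a_{N,k}$. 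One minor remark: your invocation of the ``continuous mapping theorem'' for $X_N^{(b)}\Rightarrow X_\infty^{(b)}$ is best justified by noting that weak convergence of the integer-valued vector $(a_{N,k})_{k\le b}$ is in fact convergence in total variation, so $\mathbb{E}[\phi(X_N^{(b)})]=\sum_n\mathbb{P}[(a_{N,k})=n]\,\mathbb{E}[\phi(g(n,T))]$ converges termwise; this is what your argument uses, and it is correct.
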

\begin{proof}
Let $(\xi_r)_{r \geq 1}$ be a sequence of independent Bernouilli random variables, such that the 
parameter of $\xi_r$ is equal to $\theta/ (\theta + r-1)$ (in particular, $\xi_1 = 1$ almost surely).
We suppose that $(\xi_r)_{r \geq 1}$ is independent of $(T_{k,p})_{k,p \geq 1}$, and for all 
$N, k \geq 1$, we define $b_{N,k}$ as the number of pairs of consecutive ones in the sequence
$(\xi_1,\xi_2,...,\xi_N,1)$ which are separated by a distance of $k$, and $b_k$ as the analog
for the infinite sequence $(\xi_r)_{r \geq 1}$. In other words:
$$b_{N,k} = \mathds{1}_{N+1 -k \geq 1, \xi_{N+1-k} = 1, \, \xi_{N+2-k} = ... = \xi_{N} =0 }
 + \sum_{1 \leq j \leq N - k} 
\mathds{1}_{\xi_j = \xi_{j+k} = 1, \, \xi_{j+1} = ... = \xi_{j+k-1} = 0},$$
and
$$b_k = \sum_{j \geq 1} 
\mathds{1}_{\xi_j = \xi_{j+k} = 1, \, \xi_{j+1} = ... = \xi_{j+k-1} = 0}.$$
By the classical properties of the Feller coupling (see for instance \cite{ABT}), 
$(b_k)_{k \geq 1}$ has the same distribution as $(a_k)_{k \geq 1}$ and 
for all $N \geq 1$, $(b_{N,k})_{k \geq 1}$ has the same distribution as $(a_{N,k})_{k \geq 1}$.
Therefore, in Proposition \ref{feller}, one can replace $\mu(M_N)$ by $\nu_N$ and 
$\mu_{\infty}$ by $\nu_{\infty}$, where:
$$\nu_N = \sum_{k=1}^{\infty} \sum_{p = 1}^{b_{N,k}} \sum_{\omega^k = T_{k,p}} \delta_{\omega},$$
and 
$$\nu_{\infty} :=  \sum_{k=1}^{\infty} \sum_{p=1}^{b_k} \sum_{\omega^k = T_{k,p}} \delta_{\omega}.$$
Let $f$ be a continuous and bounded function from $\mathbb{C}$ to $\mathbb{R}$, equal to zero 
in a neighborhood of the circle $|z|= R$. There exists $A > 0$, $0 < R_1 < R < R_2$, depending
only on $f$, such that $|f(z)| \leq A \mathds{1}_{|z| \notin (R_1,R_2)}$, and then,
$$\int_{\mathbb{C}} |f| d \nu_{\infty} \leq 
A \sum_{k=1}^{\infty} \sum_{p=1}^{b_k} k \mathds{1}_{|T_{k,p}|^{1/k} \notin (R_1,R_2)}.$$
The function $f$ is a.s. integrable with repect to $\nu_{\infty}$ (and then, $\mu_{\infty}$). Indeed,
$$\int_{\mathbb{C}} |f| d \nu_{\infty} < \infty$$
if and only if
$$\sum_{k=1}^{\infty} \sum_{p=1}^{b_k} \mathds{1}_{|T_{k,p}|^{1/k} \notin (R_1,R_2)} <\infty,$$
and the expectation of this quantity is 
$$\theta \sum_{k=1}^{\infty} \frac{\mathbb{P} [|T_{k,1}|^{1/k} \notin (R_1,R_2)]}{k},$$
which is finite since $\log(|Z|)$ is in $\mathcal{U}^q$ for some $q > 0$. 
Let us now introduce the random measure:
$$\nu'_N = \sum_{k=1}^{\infty} \sum_{p= 1}^{c_{N,k}} \sum_{\omega^k = T_{k,p}} \delta_{\omega},$$
where 
\begin{equation}
c_{N,k} = \sum_{1 \leq j \leq N - k} 
\mathds{1}_{\xi_j = \xi_{j+k} = 1, \, \xi_{j+1} = ... = \xi_{j+k-1} = 0}. \label{cN}
\end{equation}
One has $b_{N,k} = c_{N,k}$, except for $k$ equal to the smallest integer $k_0$ such that $\xi_{N+1 - k_0} = 1$, in which case
 $b_{N,k_0} = c_{N,k_0} + 1$. Therefore, 
$$\nu_N = \nu'_N + \nu''_N,$$
where 
$$\nu''_N = \sum_{\omega^{k_0} = T_{k_0, c_{N,k_0} + 1}} \delta_{\omega}.$$
Since $f$ is a.s. integrable with respect to $\nu_{\infty}$, 
and for all $k \geq 1$, $b_k$ is the increasing limit of $c_{N,k}$, one obtains, a.s.:
\begin{equation}
\int_{\mathbb{C}} f \, d \nu'_N \underset{N \rightarrow \infty}{\longrightarrow}
 \int_{\mathbb{C}} f \, d \nu_{\infty}. \label{nu1}
\end{equation}
Moreover, there exist $0 < R_1 < R < R_2$, depending only on $f$, such that for all $k' \geq 1$:
\begin{align*}
\mathbb{P} \left[ \int_{\mathbb{C}} f \, d \nu''_N \neq 0 \right] & \leq \mathbb{P} 
\left[ |T_{k_0, c_{N, k_0} +1 }|^{1/k_0} \notin (R_1,R_2) \right]
\\ &  =  \sum_{k \geq 1} \mathbb{P} [k_0 = k] \mathbb{P} [|T_{k,1}|^{1/k} \notin (R_1,R_2) ] 
\\ & \leq \mathbb{P} [k_0 \leq k'] + \sup_{k'' > k'}  \mathbb{P} \left[|T_{k'', 1}|^{1/k''}
 \notin (R_1,R_2) \right]
\end{align*}
Now, $\mathbb{P} [k_0 \leq k']$ tends to zero when $N$ goes to infinity (the order of magnitude is $1/N$), 
and by taking $N \rightarrow \infty$, and then $k' \rightarrow \infty$, one deduces:
\begin{equation}
\mathbb{P} \left[ \int_{\mathbb{C}} f \, d \nu''_N \neq 0 \right]  \underset{N \rightarrow \infty}
{\longrightarrow} 0. \label{nu2}
\end{equation}
Therefore, by taking \eqref{nu1} and \eqref{nu2} together:
$$\int_{\mathbb{C}} f \, d \nu'_N \underset{N \rightarrow \infty}{\longrightarrow}
 \int_{\mathbb{C}} f \, d \nu_{\infty}$$
in probability, and a fortiori, in distribution. 
\end{proof}
\noindent
\begin{remark}
In the proof of Proposition \ref{feller}, we have used the Feller coupling in order to replace the convergence in law
by a convergence in probability. However, this coupling does not correspond to the coupling used 
by considering the measure $\mathbb{P} (\infty, \theta, \mathcal{L})$. Moreover, if 
$(M_N)_{N \geq 1}$ follows this measure, the number of cycles of a given length in $\sigma_N$ does not converge
when $N$ goes infinity, and the support of these cycles is changing infinitely often. Hence, one cannot expect 
an almost sure convergence (or even a convergence in probability) in Proposition \ref{feller}.
\end{remark}
\noindent
Propositions \ref{empirical} and \ref{feller} apply for a large family of distributions $\mathcal{L}$, however,
 some integrability conditions need to be satisfied. One can ask what happens if these conditions 
do not hold. We are not able to prove a result for all the possible distributions $\mathcal{L}$,
but it is possible to study some important particular cases, if the probability distributions 
involved in our problem can be explicitly computed. Here, the most important distributions we need to 
deal with are the probability measures $\mathcal{L}_k$, $k \geq 1$, defined in the following way:
$\mathcal{L}_k$ is the unique measure on $\mathbb{C}^*$, invariant by multiplication
 by a $k$-th root of unity, and such that its image by the $k$-th power is the multiplicative convolution of $k$ 
copies
 of $\mathcal{L}$. Intuitively, $\mathcal{L}_k$ is the law of a random $k$-th root of the product of $k$ independent random 
variables with law $\mathcal{L}$, chosen uniformly among the $k$ possible roots.  
The measures $\mathcal{L}_k$ are
not easy to compute in general. One case where the computation is simplified  is the case where $\mathcal{L}$ has the radial symmetry. Indeed let $(Y_p)_{p \geq 1}$ be a sequence 
of i.i.d. random variables with the same law as $\log(|Z|)$, where $Z$ follows the distribution $\mathcal{L}$. 
The distribution $\mathcal{L}_k$ is the law of $e^{i \Theta + \frac{1}{k} \sum_{p=1}^k Y_p}$
where $\Theta$ is independent of $(Y_p)_{p \geq 1}$ and uniform on $[0,2 \pi)$. In particular, 
if $\mathcal{L}$ is the law of $e^{i \Theta + \rho S_{\alpha}}$, where $\rho$ is a strictly positive parameter, 
$\Theta$ a uniform variable on $[0,2 \pi)$ and $S_{\alpha}$  ($\alpha \in (0,2]$) an independent 
standard symmetric stable random variable of index $\alpha$, 
then $\mathcal{L}_k$ is the law of $e^{i \Theta + \rho k^{(1-\alpha)/\alpha} S_{\alpha}}$.
Using this explicit description of $\mathcal{L}_k$, we can make a detailled study of the "stable case". 
For $\alpha > 1$ (and in particular for a log-normal modulus, corresponding to $\alpha=2$), Propositions
\ref{empirical} and \ref{feller} directly apply. Therefore, let us suppose $\alpha \leq 1$. For $\alpha =1$, one has the following:
\begin{proposition} \label{Cauchy}
 Let $\rho > 0$, and let $\mathcal{L}$ be the law of $e^{i \Theta + \rho S_{1}}$, where
$\Theta$ is a uniform random variable on $[0,2 \pi)$ and $S_{1}$ an independent
standard symmetric Cauchy random variable. 
For $\theta >0$, let $(M_N)_{N \geq 1}$ be a sequence of random matrices such that $M_N$ follows
the distribution $\mathbb{P} (N,\theta, \mathcal{L})$. Then the distribution of the random probability measure
 $\mu(M_N)/N$ converges to the law of the random measure
$$\bar{\mu}_{\infty} := \sum_{m \geq 1} x_m \mu^{U}_{e^{\rho S_{1}^{(m)}}},$$
where $(x_m)_{m \geq 1}$ is a Poisson-Dirichlet process with parameter $\theta$, $(S_1^{(m)})_{m \geq 1}$
is an independent sequence of i.i.d. standard symmetric Cauchy variables, and for $R > 0$, $\mu^{U}_{R}$
is the uniform measure on the circle of center zero and radius $R$. This convergence has to be understood
as follows: for all continuous and bounded functions $f$ from $\mathbb{C}$ to $\mathbb{R}$,
$$\frac{1}{N} \, \int_{\mathbb{C}} f \, d \mu(M_N) \underset{N \rightarrow \infty}{\longrightarrow}
\int_{\mathbb{C}} f \, d \bar{\mu}_{\infty}$$
in distribution. 
\end{proposition}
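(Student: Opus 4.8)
The plan is to reduce the statement to three ingredients: convergence of the decreasing normalized cycle lengths of $\sigma_N$ to the Poisson--Dirichlet law (Proposition~\ref{lengths}), the $1$-stability of the symmetric Cauchy distribution, and the elementary fact that a regular $l$-gon inscribed in a circle approaches the uniform distribution on that circle (against continuous test functions) as $l\to\infty$. Fix a bounded continuous $f:\mathbb{C}\to\mathbb{R}$. Write $z_j=e^{i\Theta_j+\rho S_{1,j}}$, with $(\Theta_j)$ i.i.d.\ uniform on $[0,2\pi)$ and $(S_{1,j})$ i.i.d.\ standard Cauchy, all independent of $\sigma_N$. Order the cycles of $\sigma_N$ by decreasing length (ties broken by smallest element), the $j$-th one being $C_{N,(j)}$ of length $l_{N,(j)}$, and set $\bar S_{N,j}:=l_{N,(j)}^{-1}\sum_{j'\in C_{N,(j)}}S_{1,j'}$, with the convention that $\bar S_{N,j}$ is an auxiliary independent standard Cauchy variable when $\sigma_N$ has fewer than $j$ cycles. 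By the description of eigenvalues recalled in Section~\ref{sec::1.2}, the $j$-th cycle contributes the $l_{N,(j)}$-th roots of $Z_{N,(j)}$, equally spaced on the circle of radius $|Z_{N,(j)}|^{1/l_{N,(j)}}=e^{\rho\bar S_{N,j}}$, so that
\[ \frac{1}{N}\int_{\mathbb{C}} f\, d\mu(M_N)=\sum_{j\ge 1}\frac{l_{N,(j)}}{N}\,P_{N,j},\qquad P_{N,j}:=\frac{1}{l_{N,(j)}}\sum_{\omega^{l_{N,(j)}}=Z_{N,(j)}}f(\omega), \]
the $j$-th summand vanishing when $\sigma_N$ has fewer than $j$ cycles.

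The crucial observation is that, conditionally on $\sigma_N$, the averages $(\bar S_{N,j})_{j\ge 1}$ are i.i.d.\ standard Cauchy --- this is exactly the $1$-stability of the Cauchy law, equivalently the identity $\mathcal{L}_k=\mathcal{L}$ for every $k$ in this example --- and, since this conditional law does not depend on $\sigma_N$, the sequence $(\bar S_{N,j})_j$ has that law unconditionally and is independent of the cycle lengths $(l_{N,(j)})_j$. I will then compare $\frac{1}{N}\int f\,d\mu(M_N)$ with the truncated surrogate $W_N^J:=\sum_{j=1}^J\frac{l_{N,(j)}}{N}\,Q_{N,j}$, where $Q_{N,j}:=\int_{\mathbb{C}}f\,d\mu^U_{e^{\rho\bar S_{N,j}}}$. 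For fixed $J$, the vector $(l_{N,(j)}/N)_{1\le j\le J}$ converges in law to $(x_j)_{1\le j\le J}$ by Proposition~\ref{lengths}, independently of $(\bar S_{N,j})_{1\le j\le J}$, and $\bigl((a_j),(b_j)\bigr)\mapsto\sum_{j=1}^J a_j\int f\,d\mu^U_{e^{\rho b_j}}$ is continuous (continuity of $r\mapsto\int f\,d\mu^U_r$ follows from dominated convergence); hence $W_N^J$ converges in law to $W^J:=\sum_{j=1}^J x_j\int f\,d\mu^U_{e^{\rho S_1^{(j)}}}$, which satisfies $|W^J-\int f\,d\bar\mu_\infty|\le\|f\|_\infty\sum_{j>J}x_j$.

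It then remains to estimate $\mathbb{E}\bigl|\frac{1}{N}\int f\,d\mu(M_N)-W_N^J\bigr|\le\sum_{j=1}^J\mathbb{E}\bigl[\tfrac{l_{N,(j)}}{N}\,|P_{N,j}-Q_{N,j}|\bigr]+\|f\|_\infty\,\mathbb{E}\bigl[\sum_{j>J}\tfrac{l_{N,(j)}}{N}\bigr]$. The last expectation has $\limsup$ over $N$ equal to $\|f\|_\infty\,\mathbb{E}[\sum_{j>J}x_j]$ (again by convergence in law together with boundedness), which tends to $0$ as $J\to\infty$. For the first sum it suffices to prove, for each fixed $j$, that $P_{N,j}-Q_{N,j}\to 0$ in probability; one then multiplies by the bounded factor $l_{N,(j)}/N$ and uses bounded convergence. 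Here $P_{N,j}-Q_{N,j}$ is precisely the error of an $l_{N,(j)}$-point equispaced Riemann sum for $\phi\mapsto f\bigl(e^{\rho\bar S_{N,j}}e^{i\phi}\bigr)$; since $l_{N,(j)}\to\infty$ in probability (because $l_{N,(j)}/N$ converges in law to $x_j>0$) while the radius $e^{\rho\bar S_{N,j}}$ has a fixed, hence tight, law, one restricts to $\{e^{\rho\bar S_{N,j}}\le R_0\}$ with $R_0$ large and uses the uniform continuity of $f$ on $\{|z|\le R_0\}$ to bound the Riemann-sum error by a quantity tending to $0$ uniformly over radii $\le R_0$. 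Assembling these facts, for every bounded Lipschitz $g:\mathbb{R}\to\mathbb{R}$ one gets $\limsup_N\bigl|\mathbb{E}[g(\tfrac{1}{N}\int f\,d\mu(M_N))]-\mathbb{E}[g(\int f\,d\bar\mu_\infty)]\bigr|\le C\,\mathbb{E}[\sum_{j>J}x_j]$ for every $J$, so this $\limsup$ vanishes, which is the asserted convergence in distribution.

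The one genuine difficulty is that the radii of the polygons are heavy-tailed (Cauchy in the exponent), so the equidistribution of the $l$-th roots on the circle cannot be made uniform over all radii; the argument must therefore interleave the truncation $J\to\infty$, the true limit $N\to\infty$, and a tightness argument for the radii, and it is this bookkeeping --- rather than any single estimate --- that needs care. What makes everything fit together is that, by $1$-stability, the radius of the $j$-th polygon is not merely asymptotically but exactly distributed as $e^{\rho S_1}$, independently across cycles and of the cycle structure, so that no limiting procedure is required inside each individual cycle.
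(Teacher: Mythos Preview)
Your argument is correct. Both you and the paper hinge on the same key observation --- the $1$-stability of the Cauchy law gives $\mathcal{L}_k=\mathcal{L}$ for all $k$, so the radius attached to each cycle is exactly $e^{\rho S_1}$ in law, independent of the cycle structure --- and both reduce the rest to Riemann-sum convergence on each cycle plus a tail control over cycles. The difference is in how that tail is handled. The paper couples all $N$ via a virtual permutation following $\mathbb{P}(\infty,\theta,\mathcal{L})$, orders cycles by smallest element so that $l_{N,m}/N\to y_m$ almost surely with $(y_m)$ GEM$(\theta)$, and invokes the domination $l_{N,m}/N\le s_m$, $\sum_m s_m<\infty$ a.s.\ (the submartingale estimate from the proof of Proposition~\ref{empirical}) to pass to the limit by dominated convergence; this yields almost sure convergence of $\bar\mu_N$ and hence convergence in law in one stroke. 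You instead work purely in distribution, order cycles by decreasing length, truncate at $J$, and control the tail through $\mathbb{E}\bigl[\sum_{j>J}l_{N,(j)}/N\bigr]\to\mathbb{E}\bigl[\sum_{j>J}x_j\bigr]\to 0$, combined with a tightness argument for the radii to make the Riemann-sum error small. Your route is more self-contained (it does not rely on the $s_m$ estimate or the virtual-permutation coupling), at the cost of the extra $J\to\infty$ bookkeeping; the paper's route is shorter because that machinery has already been set up earlier.
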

\begin{proof}
One can suppose that $(M_N)_{N \geq 1}$ follows the distribution $\mathbb{P} (\infty, \theta, \mathcal{L})$.
With the same notation as in the proof of Proposition \ref{empirical}, one sees that $\mu(M_N)/N$ has 
the same distribution as the random measure:
$$\bar{\mu}_N := \sum_{m = 1}^{\infty} \frac{l_{N,m}}{N} \, \left( \frac{\mathds{1}_{l_{N,m}>0} }{l_{N,m}}
\, \sum_{\omega \in \mathbb{U}_{l_{N,m}}} \delta_{\omega L_m} \right),$$
where $(L_m)_{m \geq 1}$ is a sequence of i.i.d. random variables with law $\mathcal{L}$ (recall that 
$\mathcal{L}_k$ is equal to $\mathcal{L}$ for all $k \geq 1$). 
If $f$ is a continuous, bounded function from $\mathbb{C}$ to $\mathbb{R}$, one has:
$$\int_{\mathbb{C}} f \, d \bar{\mu}_{N} = \sum_{m = 1}^{\infty}
  \frac{l_{N,m}}{N} \, \left( \frac{\mathds{1}_{l_{N,m}>0} }{l_{N,m}}
\, \sum_{\omega \in \mathbb{U}_{l_{N,m}}} f(\omega L_m) \right).$$
Now, there exists a GEM process $(y_m)_{m \geq 1}$ of parameter $\theta$ and a sequence of random variables
$(s_m)_{m \geq 1}$, such that almost surely,
$$ \sum_{m =1}^{\infty} s_m < \infty$$
$$\frac{l_{N,m}}{N} \underset{N \rightarrow \infty}{\longrightarrow} y_m,$$ 
for all $m \geq 1$ and 
$$\frac{l_{N,m}}{N} \leq s_m$$
for all $N, m \geq 1$. 
By the convergence of Riemann sums corresponding to the integral of continuous functions on a compact set, one
deduces that almost surely, for all $m \geq 1$:
$$\frac{l_{N,m}}{N} \, \left( \frac{\mathds{1}_{l_{N,m}>0} }{l_{N,m}}
\, \sum_{\omega \in \mathbb{U}_{l_{N,m}}} f(\omega L_m) \right) 
\underset{N \rightarrow \infty}{\longrightarrow} \frac{y_m}{2\pi}  \, \int_{0}^{2 \pi} f(L_m e^{i \lambda})
d \lambda,$$
where the left-hand side is smaller than or equal to $s_m ||f||_{\infty}$, independently of $N$. 
By dominated convergence:
$$\int_{\mathbb{C}} f \, d \bar{\mu}_{N} \underset{N \rightarrow \infty}{\longrightarrow} 
\frac{1}{2\pi} \, \sum_{m \geq 1} y_m  \, \int_{0}^{2 \pi} f(L_m e^{i \lambda}) d \lambda,$$
almost surely. This implies Proposition \ref{Cauchy}.
\end{proof}
\begin{remark}
Almost surely, the random measure $\bar{\mu}_{\infty}$ is strictly positive for any nonempty set of $\mathbb{C}$. 
Therefore, for all continuous functions $f$ from $\mathbb{C}$ to $\mathbb{R}_+$, non-identically zero, and 
for all $A \in \mathbb{R}_+$:
$$\mathbb{P} \left[\int_{\mathbb{C}} f \, d \mu(M_N) \leq A \right] \underset{N \rightarrow \infty}{\longrightarrow}
0.$$
Hence, one cannot expect an analog of Proposition \ref{feller} in the case studied here.
\end{remark}
\noindent
For the case $\alpha < 1$, one intuitively expects that most of the eigenvalues become very large or very small.
The precise statement is the following:
\begin{proposition} \label{alpha<1}
Let $\rho > 0$, and let $\mathcal{L}$ be the law of $e^{i \Theta + \rho S_{\alpha}}$, where
$\Theta$ is a uniform random variable on $[0,2 \pi)$ and $S_{\alpha}$ an independent
standard symmetric stable random variable, with index $\alpha <1$. 
For $\theta >0$, let $(M_N)_{N \geq 1}$ be a sequence of random matrices such that $M_N$ follows
the distribution $\mathbb{P} (N,\theta, \mathcal{L})$. Then, the distibution of the random probability
 $\mu(M_N)/N$ converges to the law of the random measure $G_{\theta} \delta_0$, where 
 $G_{\theta}$ is a beta random variable with parameters $(\theta/2,\theta/2)$, in the following 
sense: for all continuous functions $f$ from $\mathbb{C}$ to $\mathbb{R}$, with compact support,
$$\frac{1}{N} \, \int_{\mathbb{C}} f \, d \mu(M_N) \underset{N \rightarrow \infty}{\longrightarrow}
 G_{\theta} f(0)$$
in distribution.
\end{proposition}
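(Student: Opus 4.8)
The plan is to reduce to the coupling $\mathbb{P}(\infty,\theta,\mathcal{L})$, exactly as at the start of the proof of Proposition~\ref{Cauchy}, and then to exploit the very strong non-concentration of the modulus of the eigenvalues produced by a long cycle when $\alpha<1$. So I would assume $M_N=M(\sigma_N,z_1,\dots,z_N)$ with $(\sigma_N)_{N\geq1}$ an Ewens$(\theta)$ virtual permutation independent of the i.i.d.\ sequence $(z_j)_{j\geq1}$ of law $\mathcal{L}$; let $(C_m)_{m\geq1}$ be the blocks of the associated partition $\Pi$ of $\mathbb{N}^*$ ordered by increasing smallest element, $C_{N,m}=C_m\cap\{1,\dots,N\}$, $l_{N,m}=|C_{N,m}|$. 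The $l_{N,m}$ eigenvalues attached to the $m$-th cycle form a regular $l_{N,m}$-gon on the circle of radius $r_{N,m}:=\exp\!\big(\frac1{l_{N,m}}\sum_{j\in C_{N,m}}\log|z_j|\big)$, so $\frac1N\int_{\mathbb{C}} f\,d\mu(M_N)=\sum_{m}\mathds{1}_{l_{N,m}>0}\frac{l_{N,m}}{N}\cdot\frac1{l_{N,m}}\sum_{\omega^{l_{N,m}}=Z_{N,m}}f(\omega)$ for continuous $f$ with compact support. From the proof of Proposition~\ref{empirical} I will use that a.s.\ $l_{N,m}/N\to y_m$ for each $m$, that $(y_m)_{m\geq1}$ is a $\mathrm{GEM}(\theta)$ sequence with every $y_m>0$ (since $\theta>0$ every block is infinite), and that $l_{N,m}/N\leq s_m$ with $\sum_m s_m<\infty$ a.s.

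The heart of the matter is the behaviour of the radii. Since $\mathcal{L}$ is the law of $e^{i\Theta+\rho S_\alpha}$ we have $\log|z_j|=\rho S_\alpha^{(j)}$ with $(S_\alpha^{(j)})$ i.i.d.\ standard symmetric $\alpha$-stable, so, conditionally on $\sigma_\infty$, $\alpha$-stability gives that $\log r_{N,m}$ has the same law as $\rho\, l_{N,m}^{(1-\alpha)/\alpha}\,S_\alpha$ (consistently with the description of $\mathcal{L}_k$ recalled before the statement). As $\alpha<1$ and $l_{N,m}\to\infty$, the factor $l_{N,m}^{(1-\alpha)/\alpha}\to\infty$; because $S_\alpha$ is symmetric with a density, this yields, for any fixed $\delta\in(0,1)$ and every $m$, that $\mathbb{P}[r_{N,m}\leq\delta\mid\sigma_\infty]\to\frac12$, $\mathbb{P}[r_{N,m}\geq1/\delta\mid\sigma_\infty]\to\frac12$ and $\mathbb{P}[\delta<r_{N,m}<1/\delta\mid\sigma_\infty]\to0$; and for distinct $m$ these events depend on the disjoint families $\{z_j:j\in C_m\}$, hence are conditionally independent given $\sigma_\infty$.

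Next I would pass to the limit. Fixing $\epsilon>0$ and choosing $\delta$ so that $f\equiv0$ outside $\{|z|<1/\delta\}$ and $|f(z)-f(0)|\leq\epsilon$ for $|z|\leq\delta$, the $m$-th term of the sum above is within $\epsilon\,l_{N,m}/N$ of $f(0)\,l_{N,m}/N$ on $\{r_{N,m}\leq\delta\}$ (all the corresponding eigenvalues lie in $\{|z|\leq\delta\}$, so phases are irrelevant), equals $0$ on $\{r_{N,m}\geq1/\delta\}$, and is bounded by $\|f\|_\infty\,l_{N,m}/N$ on the remaining event, whose conditional probability tends to $0$. Using the a.s.\ summable bound $l_{N,m}/N\leq s_m$ to control the tails in $m$ uniformly in $N$, then letting $N\to\infty$ conditionally on $\sigma_\infty$ — with $l_{N,m}/N\to y_m$ and $(\mathds{1}_{r_{N,m}\leq\delta})_m$ converging jointly to independent Bernoulli$(1/2)$ variables $(B_m)$ — then integrating out $\sigma_\infty$ by dominated convergence of characteristic functions, and finally letting $\epsilon\to0$, I obtain the convergence in distribution $\frac1N\int_{\mathbb{C}} f\,d\mu(M_N)\to f(0)\,G_\theta$ with $G_\theta:=\sum_{m\geq1}y_mB_m$, where $(y_m)\sim\mathrm{GEM}(\theta)$ and $(B_m)$ are i.i.d.\ Bernoulli$(1/2)$ independent of $(y_m)$.

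It then remains to identify the law of $G_\theta$. Realising the ranked $\mathrm{PD}(\theta)$ sequence as $(\Delta_j/T)_{j\geq1}$, where $(\Delta_j)$ are the jumps of a $\mathrm{Gamma}(\theta)$ subordinator and $T=\sum_j\Delta_j\sim\mathrm{Gamma}(\theta)$, the $\mathrm{GEM}(\theta)$ sequence is a size-biased reordering of $(\Delta_j/T)$; since the marks are i.i.d., $G_\theta$ has the same law as $\big(\sum_j\Delta_j\tilde B_j\big)/T$ for $(\tilde B_j)$ i.i.d.\ Bernoulli$(1/2)$ attached to the unordered jumps, and by Poisson thinning of the jump measure, $\sum_j\Delta_j\tilde B_j$ and $\sum_j\Delta_j(1-\tilde B_j)$ are independent $\mathrm{Gamma}(\theta/2)$ variables. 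Hence $G_\theta=\Gamma_1/(\Gamma_1+\Gamma_2)$ with $\Gamma_1,\Gamma_2$ i.i.d.\ $\mathrm{Gamma}(\theta/2)$, i.e.\ $G_\theta$ is $\mathrm{Beta}(\theta/2,\theta/2)$-distributed, as claimed. I expect the main obstacle to be the careful bookkeeping in the passage to the limit — interchanging $N\to\infty$, the truncation in $m$, the conditioning on $\sigma_\infty$, and $\epsilon\to0$ — which hinges on the a.s.\ summable domination $l_{N,m}/N\leq s_m$; by contrast the identification of the limiting law is a soft and classical computation about Bernoulli-coloured Poisson--Dirichlet masses.
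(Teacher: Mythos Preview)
Your approach is correct and follows the same overall route as the paper: couple via $\mathbb{P}(\infty,\theta,\mathcal{L})$, use that for $\alpha<1$ the radius $r_{N,m}$ escapes to $0$ or $\infty$ with equal probability, reduce the limit to $f(0)\sum_{m} y_m B_m$ with $(y_m)\sim\mathrm{GEM}(\theta)$ and $(B_m)$ i.i.d.\ Bernoulli$(1/2)$, and then identify this sum as $\mathrm{Beta}(\theta/2,\theta/2)$. Two tactical differences from the paper are worth noting. First, rather than passing to the limit in the conditional law of $\mathds{1}_{r_{N,m}\le\delta}$ as you do, the paper replaces $f$ by $g(z)=f(0)\,\mathds{1}_{|z|\le 1}$ (after showing $\frac1N\int|f-g|\,d\mu(M_N)\to0$ in probability via the annulus mass and the domination $l_{N,m}/N\le s_m$), and then observes that for \emph{every} fixed $N$ the variables $\mathds{1}_{|Z_{N,m}|\le1}$ are already exactly i.i.d.\ Bernoulli$(1/2)$ given the virtual permutation, by symmetry of $S_\alpha$; this yields, for each $N$, the equality in law $\frac1N\int g\,d\mu(M_N)=\sum_m\frac{l_{N,m}}{N}\epsilon_m$ with $N$-independent marks $(\epsilon_m)$, so the passage $N\to\infty$ becomes an almost sure (dominated) convergence and the interchange of limits you flag as the main obstacle disappears entirely. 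Second, for the identification of the law, the paper derives from GEM self-similarity the distributional equation $X=VX+\epsilon(1-V)$ (with $V\sim\mathrm{Beta}(1,\theta)$, $\epsilon\sim\mathrm{Bernoulli}(1/2)$, all independent) and computes moments, whereas your Gamma-subordinator thinning argument is more direct and arguably cleaner.
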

\begin{remark}
The total mass of the limit measure is a.s. strictly smaller than one. Intuitively, this is due 
to the fact that a large part of the total mass of $\mu(M_N)$ is going far away from zero, when $N$ is large. 
This mass is missing in the limit measure in Proposition \ref{alpha<1}, because we consider
 functions $f$ with compact support.
\end{remark}
\begin{proof}
We suppose that $(M_N)_{N \geq 1}$ follows the distribution $\mathbb{P} (\infty, \theta, \mathcal{L})$. 
Let $f$ be a continuous function from $\mathbb{C}$ to $\mathbb{R}$, with compact support, 
and let us choose $R > 1$ such that $f(z) = 0$ for all $z$ such that $|z| > R$. Let us define, for  
all $z \in \mathbb{C}$, $$g(z) := \mathds{1}_{|z| \leq 1} f(0).$$
and for $r \in (0,1)$:
$$\beta(r) := \sup \{ |f(z) - f(0)|, |z| \leq r \},$$
which tends to zero with $r$. One checks that
$$\frac{1}{N} \, \int_{\mathbb{C}} |f-g| \, d\mu(M_N) \leq \beta(r) + 2 ||f||_{\infty} \frac{\mu(M_N) (\{z \in \mathbb{C}, 
r \leq |z| \leq R \})}{N},$$
which implies:
\begin{align*}
\mathbb{P} \left[ \frac{1}{N} \, \int_{\mathbb{C}} |f-g| \, d\mu(M_N) >  \beta(r) + 
(2 ||f||_{\infty} + 1) r \right]
& \leq \mathbb{P} \left[ \frac{\mu(M_N) (\{z \in \mathbb{C}, 
r \leq |z| \leq R \})}{N} \geq r \right] \\ & 
\leq \frac{1}{r} \mathbb{E} \left[ \frac{\mu(M_N) (\{z \in \mathbb{C}, 
r \leq |z| \leq R \})}{N} \right]. 
\end{align*}
\noindent
Now,
$$ \mathbb{E} \left[\frac{\mu(M_N) (\{z \in \mathbb{C}, 
r \leq |z| \leq R \})}{N} \right] = \sum_{m =1}^{\infty} \mathbb{E} \left[ \frac{l_{N,m}}{N}
\, \mathds{1}_{l_{N,m} > 0, |Z_{N,m}|^{(1/l_{N,m})} \in (r,R)} \right]. $$
From the independence of $l_{N,m}$ and $(z_j)_{j \geq 1}$, and from the basic properties 
of stable random variables, one has
$$ \mathbb{P} \left[ l_{N,m} > 0, |Z_{N,m}|^{(1/l_{N,m})} \in (r,R) | l_{N,m} \right]
= \Psi(l_{N,m}),$$
where the function $\Psi$ (which can depend on $\alpha$, $\rho$, $r$ and $R$, but not on $N$) is 
bounded by one and tends 
to zero at infinity.  Hence:
$$\mathbb{E} \left[\frac{\mu(M_N) (\{z \in \mathbb{C}, 
r \leq |z| \leq R \})}{N} \right] \leq \mathbb{E} \left[ \sum_{m =1}^{\infty} 
\frac{l_{N,m} \Psi(l_{N,m})}{N} \right].$$
Now, for all $m \geq 1$, a.s.:
$$\frac{l_{N,m} \Psi(l_{N,m})}{N}  \underset{N \rightarrow \infty}{\longrightarrow} 0$$
and 
$$\frac{l_{N,m} \Psi(l_{N,m})}{N} \leq s_m.$$
By dominated convergence,
$$ \mathbb{E} \left[\frac{\mu(M_N) (\{z \in \mathbb{C}, 
r \leq |z| \leq R \})}{N} \right] \underset{N \rightarrow \infty}{\longrightarrow} 0$$
which implies:
$$\mathbb{P} \left[ \frac{1}{N} \, \int_{\mathbb{C}} |f-g| \, d\mu(M_N) >  \beta(r) + (2||f||_{\infty} +1) r \right]
\underset{N \rightarrow \infty}{\longrightarrow} 0.$$
By letting $r$ go to zero, one deduces that
$$\frac{1}{N} \, \int_{\mathbb{C}} |f-g| \, d\mu(M_N) \underset{N \rightarrow \infty}{\longrightarrow} 0$$
in probability. 
Therefore, it is sufficient to prove the conclusion of Proposition \ref{alpha<1}, with $f$ replaced by $g$. 
Moreover, one can suppose $f(0) = g(0) = 1$. In this case, one has:
$$\frac{1}{N} \, \int_{\mathbb{C}} g  \, d\mu(M_N) 
= \sum_{m \geq 1} \frac{l_{N,m}}{N} \, \mathds{1}_{l_{N,m} > 0, |Z_{N,m}| \leq 1}.$$
Now, by symmetry of the stable variables considered here and by the independence of $(l_{N,m})_{m \geq 1}$ and
$(z_j)_{j \geq 1}$, 
$$\frac{1}{N} \, \int_{\mathbb{C}} g  \, d\mu(M_N) = 
\sum_{m \geq 1} \frac{l_{N,m}}{N} \, \epsilon_m$$
in distribution, where $(\epsilon_m)_{m \geq 1}$ is a sequence of i.i.d Bernoulli variables of parameter
1/2, independent of $(l_{N,m})_{N,m \geq 1}$. 
Now, by dominated convergence (recall that the sum of the variables $s_m$ is a.s. finite),
$$\sum_{m \geq 1} \frac{l_{N,m}}{N} \, \epsilon_m \underset{N \rightarrow \infty}{\longrightarrow}
\sum_{m \geq 1}  y_m \epsilon_m,$$
which implies Proposition \ref{alpha<1} if we check that 
$$X := \sum_{m \geq 1} y_m \epsilon_m$$
is equal to $G_{\theta}$ in distribution (recall 
that $(y_m)_{m \geq 1}$ is a GEM($\theta$) process, independent of 
the sequence $(\epsilon_m)_{m \geq 1}$). This fact can be proved as follows:
by self-similarity of the GEM($\theta$) process, 
$$X = VX + \epsilon (1-V),$$
in distribution, where $V$, $X$, $\epsilon$ are independent, $V$ is a beta variable of parameters $\theta$ and
$1$, and $\epsilon$ is a Bernoulli variable of parameter $1/2$. With this identity, one can compute the 
moments of $X$ by induction, and finally, one can identify its law. 
\end{proof}
\noindent
Again for the stable case with parameter $\alpha < 1$, one has an analog of Proposition \ref{feller}, 
for the eiganvalues which are in a compact set not containing zero. The precise statement is:
\begin{proposition}
Let $\rho > 0$, and let $\mathcal{L}$ be the law of $e^{i \Theta + \rho S_{\alpha}}$, where
$\Theta$ is a uniform random variable on $[0,2 \pi)$ and $S_{\alpha}$ an independent
standard symmetric stable random variable, with index $\alpha <1$. 
For $\theta >0$, let $(M_N)_{N \geq 1}$ be a sequence of random matrices such that $M_N$ follows
the distribution $\mathbb{P} (N,\theta, \mathcal{L})$. 
Then, for all continuous functions $f$ from $\mathbb{C}$ to $\mathbb{R}$, with compact support, and such 
that $f= 0$ on a neighborhood of zero, $f$ is a.s. integrable with respect to $\mu_{\infty}$,
where the random measure $\mu_{\infty}$ is given by \eqref{muinfty}. Moreover, the following
convergence in distribution holds with such test functions:
$$\int_{\mathbb{C}} f \, d \mu(M_N) \underset{N \rightarrow \infty}{\longrightarrow} \int_{\mathbb{C}}
f \, d \mu_{\infty}.$$
\end{proposition}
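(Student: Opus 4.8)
The plan is to follow the proof of Proposition~\ref{feller} almost line for line, through the Feller coupling, the only genuinely new ingredient being the verification that $f$ is a.s.\ integrable with respect to $\mu_\infty$. Since $\log(|Z|)=\rho S_\alpha$ is not even integrable here, neither Proposition~\ref{empirical} nor Proposition~\ref{feller} applies directly, and the $\mathcal{U}^q$ hypothesis has to be replaced by a direct concentration estimate for sums of symmetric stable variables. Concretely, I would reuse verbatim the objects built in the proof of Proposition~\ref{feller}: the independent Bernoulli variables $(\xi_r)_{r\ge1}$ of parameter $\theta/(\theta+r-1)$, independent of the $T_{k,p}$'s (whose law is the multiplicative convolution of $k$ copies of $\mathcal{L}$); the Feller-coupling counts $b_{N,k}$, $b_k$, $c_{N,k}$; and the random measures $\nu_N$, $\nu_\infty$, $\nu'_N$, $\nu''_N$ built from them, which satisfy $\nu_N\overset{d}{=}\mu(M_N)$ and $\nu_\infty\overset{d}{=}\mu_\infty$. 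Because $\mathcal{L}$ is the radially symmetric law whose log-modulus is $\rho S_\alpha$, the stability of $S_\alpha$ gives $|T_{k,p}|^{1/k}\overset{d}{=}\exp\!\big(\rho\,k^{(1-\alpha)/\alpha}\,S_\alpha\big)$. Finally, as $f$ is continuous, compactly supported and vanishes near $0$, I fix $0<R_1<R_2<\infty$ and $A>0$ with $|f|\le A\,\mathds{1}_{R_1\le|z|\le R_2}$; the sole formal change from Proposition~\ref{feller} is that $f$ is now supported \emph{inside} an annulus rather than outside one, so every occurrence of ``$|T_{k,p}|^{1/k}\notin(R_1,R_2)$'' becomes ``$|T_{k,p}|^{1/k}\in[R_1,R_2]$''.

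The key estimate is the following. The density of $S_\alpha$ is bounded (its characteristic function $t\mapsto e^{-|t|^\alpha}$ is integrable), hence
\[
\mathbb{P}\big[\,|T_{k,1}|^{1/k}\in[R_1,R_2]\,\big]=\mathbb{P}\big[\,S_\alpha\in\rho^{-1}k^{-(1-\alpha)/\alpha}\,[\log R_1,\log R_2]\,\big]\le C\,k^{-(1-\alpha)/\alpha},
\]
with $C$ depending only on $f,\rho,\alpha$, and consequently
\[
\theta\sum_{k\ge1}\frac{1}{k}\,\mathbb{P}\big[\,|T_{k,1}|^{1/k}\in[R_1,R_2]\,\big]\le C\theta\sum_{k\ge1}k^{-1/\alpha}<\infty,
\]
since $\alpha<1$ makes $1/\alpha>1$. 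As in Proposition~\ref{feller}, the left-hand side is the expectation of $\sum_k\sum_{p=1}^{b_k}\mathds{1}_{|T_{k,p}|^{1/k}\in[R_1,R_2]}$ (Wald's identity, $\mathbb{E}[b_k]=\theta/k$, the $T_{k,p}$'s being independent of $(b_k)$), so this sum is a.s.\ finite; only finitely many of its terms are nonzero, and this lets me dominate $\int|f|\,d\nu_\infty\le A\sum_k\sum_{p=1}^{b_k}k\,\mathds{1}_{|T_{k,p}|^{1/k}\in[R_1,R_2]}$ by a finite random constant. Hence $f$ is a.s.\ $\nu_\infty$-integrable, and therefore a.s.\ $\mu_\infty$-integrable.

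For the convergence I would copy the end of the proof of Proposition~\ref{feller}. Writing $\nu_N=\nu'_N+\nu''_N$, the increasing convergence $c_{N,k}\uparrow b_k$ and the $\nu_\infty$-integrability of $f$ give $\int f\,d\nu'_N\to\int f\,d\nu_\infty$ a.s.\ by dominated convergence. For the boundary term $\nu''_N$, which is carried by the single $k_0$-cycle ($k_0$ the least integer with $\xi_{N+1-k_0}=1$),
\[
\mathbb{P}\Big[\,\int f\,d\nu''_N\neq0\,\Big]\le\sum_{k\ge1}\mathbb{P}[k_0=k]\,\mathbb{P}\big[|T_{k,1}|^{1/k}\in[R_1,R_2]\big]\le\mathbb{P}[k_0\le k']+\sup_{k>k'}\mathbb{P}\big[|T_{k,1}|^{1/k}\in[R_1,R_2]\big],
\]
and letting $N\to\infty$ (so that $\mathbb{P}[k_0\le k']=O(k'/N)\to0$) and then $k'\to\infty$ (using $\mathbb{P}[|T_{k,1}|^{1/k}\in[R_1,R_2]]\le C k^{-(1-\alpha)/\alpha}\to0$) gives $\int f\,d\nu''_N\to0$ in probability. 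Adding the two pieces, $\int f\,d\nu_N\to\int f\,d\nu_\infty$ in probability, whence $\int f\,d\mu(M_N)\to\int f\,d\mu_\infty$ in distribution, which is the claim.

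The only step not already in Proposition~\ref{feller} is the summability $\sum_k k^{-1}\mathbb{P}[|T_{k,1}|^{1/k}\in[R_1,R_2]]<\infty$, and I expect it to be the main (essentially the only) obstacle. Its meaning is that for $\alpha<1$ the rescaling exponent $(1-\alpha)/\alpha$ is strictly positive, so the eigenvalues coming from $k$-cycles have moduli that escape to $0$ or to $\infty$ fast enough to beat the $\theta/k$ expected number of such cycles; the only extra care needed is the ``finitely many nonzero terms'' observation used to pass from the bare indicator sum to the $k$-weighted one.
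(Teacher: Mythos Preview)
Your proposal is correct and follows essentially the same approach as the paper's proof, which simply refers back to the proof of Proposition~\ref{feller} and notes that the only change is replacing the estimate $\mathbb{P}[|T_{k,1}|^{1/k}\notin(R_1,R_2)]\le Ck^{-q}$ by $\mathbb{P}[|T_{k,1}|^{1/k}\in(r,R)]\le Ck^{-q}$, available from classical properties of symmetric stable variables. You have in fact supplied more detail than the paper does: you make the exponent $q=(1-\alpha)/\alpha$ explicit via the scaling $|T_{k,1}|^{1/k}\overset{d}{=}\exp(\rho\,k^{(1-\alpha)/\alpha}S_\alpha)$ and the boundedness of the density of $S_\alpha$, and you spell out why $\sum_k k^{-1/\alpha}<\infty$ exactly when $\alpha<1$.
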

 \begin{proof}
The proof is similar to the proof of Proposition \ref{feller}. One only has  to change the 
estimate:
$$\mathbb{P} [|T_{k,1}|^{1/k} \notin (R_1,R_2)] \leq C k^{-q}$$
(for some $C, q > 0$), by the estimate:
$$\mathbb{P} [|T_{k,1}|^{1/k} \in (r,R)] \leq C k^{-q},$$
available for all $r$, $R$ such that $R > r > 0$, by the classical properties of symmetric stable 
random variables. 
\end{proof}
\subsection{The average eigenvalues distributions}
Another interesting problem is the study of the expectation $\tilde{\mu}_N$ of the random measure $\mu(M_N)$, where 
 $M_N$ follows the distribution $\mathbb{P} (N, \theta, \mathcal{L})$. The measure $\tilde{\mu}_N$ of 
a Borel set in $\mathbb{C}$ represents the expected number of eigenvalues of $M_N$ (with multiplicity)
contained in this set, and it can be explicitly computed:
\begin{proposition} \label{mutilde}
Let $N \geq 1$, $\theta > 0$, $\mathcal{L}$ a probability measure on $\mathbb{C}^*$. If $M_N$ follows 
the distribution $\mathbb{P} (N, \theta, \mathcal{L})$, then the expectation $\tilde{\mu}_N$ of 
$\mu(M_N)$ can be represented as follows:
$$\tilde{\mu}_N = \theta \sum_{k=1}^N \frac{N(N-1)...(N-k+1)}{(N-1+ \theta)(N-2+\theta)...(N-k+
\theta)} \, \mathcal{L}_k.$$
\end{proposition}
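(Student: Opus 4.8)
The plan is to compute $\tilde\mu_N=\mathbb{E}[\mu(M_N)]$ directly by linearity of expectation, using the explicit description of the spectrum from Section~\ref{sec::1.2}. Writing $M_N=M(\sigma,z_1,\dots,z_N)$ with $\sigma$ having cycle supports $C_1,\dots,C_n$ of lengths $l_1,\dots,l_n$ and $Z_m=\prod_{j\in C_m}z_j$, we have
$$\mu(M_N)=\sum_{m=1}^n\sum_{\omega^{l_m}=Z_m}\delta_\omega=\sum_{k=1}^N\;\sum_{m:\,l_m=k}\;\sum_{\omega^k=Z_m}\delta_\omega.$$
First I would condition on $\sigma$. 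Since the $z_j$ are i.i.d.\ of law $\mathcal{L}$ and independent of $\sigma$, and since the cycle supports are pairwise disjoint, conditionally on $\sigma$ the variables $(Z_m)_{1\le m\le n}$ are independent, with $Z_m$ distributed as the multiplicative convolution $\mathcal{L}^{*l_m}$ of $l_m$ copies of $\mathcal{L}$. Thus the problem reduces to two separate computations: the conditional expected contribution of a single $k$-cycle, and the expected number $\mathbb{E}[a_{N,k}]$ of $k$-cycles of a permutation drawn from the Ewens($\theta$) measure on $\Sigma_N$.

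For the first, I claim that $\mathbb{E}\big[\sum_{\omega^k=Z}\delta_\omega\big]=k\,\mathcal{L}_k$ whenever $Z$ has law $\mathcal{L}^{*k}$. Indeed, conditionally on $Z$, the probability measure $\frac1k\sum_{\omega^k=Z}\delta_\omega$ is the law of a $k$-th root of $Z$ chosen uniformly among the $k$ possibilities; averaging over $Z$ produces a probability measure $\nu$ on $\mathbb{C}^*$. Multiplication by any $k$-th root of unity permutes the set $\{\omega:\omega^k=Z\}$, so $\nu$ is invariant under such multiplications; and since $\omega^k=Z$ for each term, the image of $\nu$ under $z\mapsto z^k$ is exactly the law of $Z$, i.e.\ $\mathcal{L}^{*k}$. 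By the defining uniqueness property of $\mathcal{L}_k$, we get $\nu=\mathcal{L}_k$, hence the claim. Combining with the conditioning above gives $\mathbb{E}[\mu(M_N)\mid\sigma]=\sum_{k=1}^N k\,a_{N,k}(\sigma)\,\mathcal{L}_k$, and therefore $\tilde\mu_N=\sum_{k=1}^N k\,\mathbb{E}[a_{N,k}]\,\mathcal{L}_k$.

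It then remains to compute $\mathbb{E}[a_{N,k}]$. Writing the unnormalized Ewens weight as $w(\sigma)=\theta^{c(\sigma)}/\Theta_N$ with $\Theta_N:=\theta(\theta+1)\cdots(\theta+N-1)$, I would count directly: a $k$-cycle is a choice of $k$-subset $S\subset\{1,\dots,N\}$ together with a cyclic order on $S$, so there are $\binom{N}{k}(k-1)!$ of them; the permutations of $\{1,\dots,N\}$ admitting a given one as a cycle are in bijection with permutations of the remaining $N-k$ points, a bijection that increases the number of cycles by exactly $1$. Using the classical identity $\sum_{\sigma'\in\Sigma_{N-k}}\theta^{c(\sigma')}=\Theta_{N-k}$ (with $\Theta_0=1$), one obtains
$$\mathbb{E}[a_{N,k}]=\binom{N}{k}(k-1)!\;\frac{\theta\,\Theta_{N-k}}{\Theta_N}=\frac{\theta}{k}\cdot\frac{N(N-1)\cdots(N-k+1)}{(\theta+N-1)(\theta+N-2)\cdots(\theta+N-k)},$$
since $\binom{N}{k}(k-1)!=N!/\big(k\,(N-k)!\big)$ and $\Theta_{N-k}/\Theta_N=1/\big((\theta+N-k)\cdots(\theta+N-1)\big)$. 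Substituting $k\,\mathbb{E}[a_{N,k}]$ into $\tilde\mu_N=\sum_{k=1}^N k\,\mathbb{E}[a_{N,k}]\,\mathcal{L}_k$ yields precisely the stated formula.

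All of this is elementary; the only step calling for real care is the identification $\mathbb{E}[\sum_{\omega^k=Z}\delta_\omega]=k\,\mathcal{L}_k$, where one should argue through the uniqueness characterization of $\mathcal{L}_k$ (invariance under $\mathbb{U}_k$ together with the prescribed image under the $k$-th power map) rather than attempt to manipulate $\mathcal{L}_k$ explicitly, these measures being in general not available in closed form. The remaining bookkeeping — the conditional independence of the $Z_m$ given $\sigma$, and the classical Ewens cycle count — is routine.
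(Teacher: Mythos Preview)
Your proof is correct and follows essentially the same route as the paper: both arguments condition on the permutation, observe that each $k$-cycle contributes $k\,\mathcal{L}_k$ to the expected measure, arrive at $\tilde\mu_N=\sum_{k=1}^N k\,\mathbb{E}[a_{N,k}]\,\mathcal{L}_k$, and then insert the known value of $\mathbb{E}[a_{N,k}]$. The only differences are cosmetic: the paper states the identity $\mathbb{E}\big[\sum_{\omega^k=T}\delta_\omega\big]=k\,\mathcal{L}_k$ without the justification you supply via the uniqueness characterization of $\mathcal{L}_k$, and it cites \cite{ABT} for $\mathbb{E}[a_{N,k}]$ rather than deriving it combinatorially as you do.
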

\begin{proof}
Let $f$ be a Borel function from $\mathbb{C}$ to $\mathbb{R}_+$. By taking the same notation as
in \eqref{muMN2}, one has:
$$\int_{\mathbb{C}} f \, d \tilde{\mu}_N = \mathbb{E} \left[ 
\sum_{k=1}^{\infty} \sum_{p = 1}^{a_{N,k}} \sum_{\omega^k = T_{k,p}} f(\omega) \right]$$
Now, for all $k, p \geq 1$:
$$\mathbb{E} \left[ \sum_{\omega^k = T_{k,p}} f(\omega) \right] = k \int_{\mathbb{C}} f \, d\mathcal{L}_k$$
Moreover, $(T_{k,p})_{k, p \geq 1}$ is independent of $(a_{N,k})_{k \geq 1}$. One deduces that 
\begin{align*}
 & \; \mathbb{E} \left[ 
\sum_{k=1}^{\infty} \sum_{p = 1}^{a_{N,k}} \sum_{\omega^k = T_{k,p}} f(\omega) | (a_{N,k})_{k \geq 1} 
 \right] \\ & = \sum_{k=1}^{\infty} k \, a_{N,k} \int_{\mathbb{C}} f \, d\mathcal{L}_k.
\end{align*}
Therefore
$$\tilde{\mu}_N = \sum_{k=1}^{\infty} k \, \mathbb{E} [a_{N,k}] \, \mathcal{L}_k.$$
By doing explicit computations of $\mathbb{E} [a_{N,k}]$ (see e.g. \cite{ABT}), one deduces Proposition \ref{mutilde}.
\end{proof}
\begin{remark}
When the distribution of the $z_j$'s is the Dirac mass at $1$, then $\mathcal{L}_k$ is the uniform measure on the $k$-th roots of unity.
\end{remark}
One has a similar result for the limit random measure $\mu_{\infty}$ of $\mu(M_N)$:
\begin{proposition} \label{mutildeinfty}
Let $\theta > 0$, $\mathcal{L}$ a probability measure on $\mathbb{C}^*$, $(M_N)_{N \geq 1}$
a sequence of random matrices such that $M_N$ follows the distribution $\mathbb{P} (N, \theta, 
\mathcal{L})$. Then the expectation of the random measure $\mu_{\infty}$, given by 
 \eqref{muinfty} (it
can be considered, in some sense, as the limit of $\mu(M_N)$ for $N \rightarrow \infty$), is the measure:
 $$\tilde{\mu}_{\infty} = \theta \sum_{k=1}^{\infty} \mathcal{L}_k.$$
\end{proposition}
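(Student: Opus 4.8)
The plan is to test both sides against an arbitrary non-negative Borel function $f$ on $\mathbb{C}$ and to check that $\int_{\mathbb{C}} f\, d\tilde{\mu}_{\infty} = \theta \sum_{k=1}^{\infty} \int_{\mathbb{C}} f\, d\mathcal{L}_k$; since two Borel measures on $\mathbb{C}$ that assign the same integral to every non-negative Borel function coincide (apply this to indicator functions), such an identity is exactly the assertion. It is important to restrict to non-negative $f$, because $\mu_{\infty}$ has infinite total mass (and so does $\theta\sum_k \mathcal{L}_k$): all the interchanges of sums and expectations below will then be legitimate by Tonelli's theorem, and the countable additivity of $\theta\sum_k \mathcal{L}_k$ as a $[0,\infty]$-valued set function is automatic.

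First I would record, directly from the definition of $\mathcal{L}_k$, the elementary identity
$$\mathbb{E}\left[\sum_{\omega^k = T_{k,p}} f(\omega)\right] = k \int_{\mathbb{C}} f\, d\mathcal{L}_k,$$
valid for all $k,p \geq 1$: indeed $\mathcal{L}_k$ is, by definition, the law of a uniformly chosen $k$-th root of a random variable whose distribution is the $k$-fold multiplicative convolution of $\mathcal{L}$, i.e.\ the law of $T_{k,p}$, so that $\int_{\mathbb{C}} f\, d\mathcal{L}_k = \mathbb{E}\bigl[\tfrac{1}{k}\sum_{\omega^k = T_{k,p}} f(\omega)\bigr]$. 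Then, mimicking the proof of Proposition \ref{mutilde}, I would condition on the family $(a_k)_{k \geq 1}$: since $(T_{k,p})_{k,p \geq 1}$ is independent of $(a_k)_{k \geq 1}$, Tonelli's theorem gives
$$\mathbb{E}\left[\sum_{k=1}^{\infty}\sum_{p=1}^{a_k}\sum_{\omega^k = T_{k,p}} f(\omega) \,\Bigm|\, (a_k)_{k \geq 1}\right] = \sum_{k=1}^{\infty} k\, a_k \int_{\mathbb{C}} f\, d\mathcal{L}_k,$$
and taking expectations while using that $a_k$ is Poisson with $\mathbb{E}[a_k] = \theta/k$ makes the factors $k$ cancel, yielding $\int_{\mathbb{C}} f\, d\tilde{\mu}_{\infty} = \theta\sum_{k=1}^{\infty}\int_{\mathbb{C}} f\, d\mathcal{L}_k$, as desired.

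There is no genuine obstacle here: the statement is the term-by-term limit of Proposition \ref{mutilde}, with the combinatorial coefficient $\mathbb{E}[a_{N,k}] = \theta\, N(N-1)\cdots(N-k+1)/[(N-1+\theta)\cdots(N-k+\theta)]$ replaced by its limit $\theta/k$. The only point requiring a moment's attention is the bookkeeping with infinite measures, which is handled exactly as above by working with non-negative test functions and invoking Tonelli rather than Fubini.
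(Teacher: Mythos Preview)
Your proof is correct and follows exactly the approach indicated by the paper, which simply says the argument is ``exactly similar to the proof of Proposition \ref{mutilde}'': you condition on $(a_k)_{k\geq 1}$, use the identity $\mathbb{E}[\sum_{\omega^k=T_{k,p}} f(\omega)]=k\int f\,d\mathcal{L}_k$, and then take expectations with $\mathbb{E}[a_k]=\theta/k$. Your explicit invocation of Tonelli to justify the interchanges with the infinite measure is a welcome addition that the paper leaves implicit.
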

\begin{proof}
The proof of Proposition \ref{mutildeinfty} is exactly similar to the proof of Proposition \ref{mutilde}.
\end{proof}
Now, we have a sequence of finite measures $\tilde{\mu}_{N}$, defined as the expectation
of $\mu(M_N)$ and explicitly described, and an infinite measure $\tilde{\mu}_{\infty}$, defined as the
 expectation of $\mu_{\infty}$. Moreover, we know that, for a large class of probability 
laws $\mathcal{L}$, the random probability measure $\mu(M_N)/N$ converges weakly in probability 
to the uniform measure on a circle, and in a sense which can be made precise, 
$\mu(M_N)$ tends to $\mu_{\infty}$. Hence, we can expect analog convergences for  the 
sequence of measures $(\tilde{\mu}_N)_{N \geq 1}$. One indeed has the following result:
\begin{proposition} \label{convmutilde}
Let $\theta > 0$, $\mathcal{L}$ a probability measure on $\mathbb{C}^*$, $(M_N)_{N \geq 1}$
a sequence of random matrices such that $M_N$ follows the distribution $\mathbb{P} (N, \theta, 
\mathcal{L})$. We suppose that if $Z$ follows the distribution $\mathcal{L}$,
then $\log (|Z|)$ is integrable, and we define $R := \exp \left( \mathbb{E}[\log (|Z|)] \right)$.
Then the probability measure $\tilde{\mu}_N/N$ (which represents the probability distribution
of a random eigenvalue of $M_N$, chosen uniformly among the $N$ possible eigenvalues), converges
weakly to the uniform distribution on the circle $\{z = R\}$. Moreover, if $\log (|Z|)$ is in 
$\mathcal{U}^2$, then for $0 < R_1 < R < R_2$, the restriction of $\tilde{\mu}_N$ to the set $\{|z| \notin (R_1,R_2) \}$
converges weakly to the corresponding restriction of $\tilde{\mu}_{\infty}$, which is a finite measure. 
\end{proposition}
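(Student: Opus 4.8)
The plan is to start from the explicit formula of Proposition \ref{mutilde}, writing
$$\tilde{\mu}_N = \theta \sum_{k=1}^N w_{N,k}\, \mathcal{L}_k, \qquad w_{N,k} := \prod_{i=0}^{k-1} \frac{N-i}{N-1-i+\theta},$$
so that $\tilde{\mu}_N/N = \sum_{k=1}^N p_{N,k}\, \mathcal{L}_k$ is a convex combination of the probability measures $\mathcal{L}_k$, with weights $p_{N,k} = \theta w_{N,k}/N \ge 0$ summing to $1$ (the identity $\theta\sum_{k\le N} w_{N,k} = N$ is just the total-mass statement $\tilde\mu_N(\mathbb{C}) = \mathbb{E}[\mu(M_N)(\mathbb{C})] = N$). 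Both assertions then reduce to two ingredients: (i) the weak limit of $\mathcal{L}_k$ as $k\to\infty$, and (ii) the behaviour of the weights $w_{N,k}$ as $N\to\infty$; in particular the elementary fact that for each fixed $k$ one has $w_{N,k}\to 1$ (each factor tends to $1$).

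For (i), recall that $\mathcal{L}_k$ is the law of a uniformly chosen $k$-th root of $Z_1\cdots Z_k$ with $Z_j$ i.i.d. of law $\mathcal{L}$. Conditionally on $(Z_j)_{1\le j\le k}$, its modulus is $\rho_k := \exp\big(\tfrac1k\sum_{j=1}^k\log|Z_j|\big)$, which converges a.s. to $R = \exp(\mathbb{E}[\log|Z|])$ by the strong law of large numbers, while its argument is uniform on $k$ equally spaced points of $\mathbb{U}$. Hence for bounded continuous $f$, $\mathbb{E}[f \mid (Z_j)_{j\le k}]$ is a Riemann sum over a mesh of size $2\pi/k$ converging a.s. to $\tfrac1{2\pi}\int_0^{2\pi} f(Re^{i\lambda})\,d\lambda$ — the same mechanism as in the proof of Proposition \ref{empirical} — and bounded convergence gives $\int f\, d\mathcal{L}_k \to \tfrac1{2\pi}\int_0^{2\pi} f(Re^{i\lambda})\,d\lambda$, i.e. $\mathcal{L}_k$ converges weakly to the uniform measure $\mu^U_R$ on $\{|z|=R\}$, using only integrability of $\log|Z|$. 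When in addition $\log|Z|\in\mathcal{U}^2$, I would also record that for every $0<R_1<R<R_2$,
$$\mathcal{L}_k\big(\{|z|\notin(R_1,R_2)\}\big) = \mathbb{P}\Big[\tfrac1k{\textstyle\sum_{j=1}^k}\log|Z_j|\notin(\log R_1,\log R_2)\Big] \le \frac{C}{k^2},$$
since $\log R\in(\log R_1,\log R_2)$.

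The first assertion then follows by combining $\mathcal{L}_k\Rightarrow\mu^U_R$ with the fact that $\sum_{k\le K}p_{N,k}\to 0$ as $N\to\infty$ for every fixed $K$ (because $w_{N,k}\to 1$ for fixed $k$): the mass of the weights escapes to infinity. A three-$\varepsilon$ argument — write $\int f\,d(\tilde\mu_N/N) - \int f\,d\mu^U_R = \sum_k p_{N,k}\big(\int f\,d\mathcal{L}_k - \int f\,d\mu^U_R\big)$, pick $K$ with $|\int f\,d\mathcal{L}_k - \int f\,d\mu^U_R|<\varepsilon$ for $k>K$, and bound the head by $2\|f\|_\infty\sum_{k\le K}p_{N,k}$ — yields weak convergence of $\tilde\mu_N/N$ to $\mu^U_R$.

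For the second assertion I would work with bounded continuous $f$ vanishing on a neighbourhood of $\{|z|=R\}$ (this is what weak convergence of the restriction to $\{|z|\notin(R_1,R_2)\}$ amounts to, in the spirit of Proposition \ref{feller}), so that $|\int f\,d\mathcal{L}_k|\le\|f\|_\infty\,\mathcal{L}_k(\{|z|\notin(R_1,R_2)\})\le C\|f\|_\infty/k^2$; in particular $\theta\sum_{k\ge1}\int f\,d\mathcal{L}_k$ converges absolutely and the restriction of $\tilde\mu_\infty$ (Proposition \ref{mutildeinfty}) to $\{|z|\notin(R_1,R_2)\}$ is finite. Writing
$$\theta\sum_{k=1}^N w_{N,k}\int f\,d\mathcal{L}_k - \theta\sum_{k=1}^\infty\int f\,d\mathcal{L}_k = \theta\sum_{k=1}^N (w_{N,k}-1)\int f\,d\mathcal{L}_k - \theta\sum_{k>N}\int f\,d\mathcal{L}_k,$$
the last sum is a tail of a convergent series, and the first I would treat by dominated convergence in $k$ (using $w_{N,k}\to1$ for each fixed $k$), so that only a domination of $w_{N,k}\,|\int f\,d\mathcal{L}_k|$ by a summable sequence uniform in $N$ is needed. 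For $\theta\ge1$ one has $w_{N,k}\le1$ and $C\|f\|_\infty/k^2$ works directly. The main obstacle is the range $0<\theta<1$, where $w_{N,k}$ grows as $k$ approaches $N$ (up to $w_{N,N}\sim\Gamma(\theta)N^{1-\theta}$); here I would prove $w_{N,k}\le C_\theta\,(N/(N-k+1))^{1-\theta}$ from $\log w_{N,k}\le(1-\theta)\sum_{0\le i<k}(N-1-i+\theta)^{-1}$, and check that $\sum_{k=1}^N w_{N,k}/k^2$ is bounded uniformly in $N$ and that $\limsup_N\sum_{k=K}^N w_{N,k}/k^2\to0$ as $K\to\infty$ (split at $k=N/2$: for $k\le N/2$ the extra factor is $\le2$, leaving $2\sum_{k\ge K}k^{-2}$; for $k>N/2$ use $k^{-2}\le 4/N^2$ together with $\sum_{N/2<k\le N}(N/(N-k+1))^{1-\theta}\lesssim N^{1-\theta}\sum_{l\le N/2}l^{\theta-1}\lesssim N/\theta$). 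A final split at level $K$, letting $N\to\infty$ and then $K\to\infty$, completes the argument.
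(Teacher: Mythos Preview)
Your argument is correct, but it takes a somewhat different route from the paper's, and in one place works harder than necessary.

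For the first assertion the paper does not touch the explicit formula at all: it simply invokes Corollary~\ref{empirical2}, which already gives $\tfrac1N\int f\,d\mu(M_N)\to \tfrac1{2\pi}\int_0^{2\pi} f(Re^{i\lambda})\,d\lambda$ in probability, and then takes expectations (the integrand being bounded). Your approach instead re-proves this at the level of expectations, by showing $\mathcal{L}_k\Rightarrow\mu^U_R$ and that the weights $p_{N,k}$ escape to infinity. Both are fine; yours is more self-contained, the paper's is shorter because the hard work was already done in Proposition~\ref{empirical}.

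For the second assertion both you and the paper argue via the explicit formula and dominated convergence, using the $\mathcal{U}^2$ hypothesis to get $\mathcal L_k(\{|z|\notin(R_1,R_2)\})\le C/k^2$. The difference is in the domination for $0<\theta<1$. You bound $w_{N,k}\le C_\theta\,(N/(N-k+1))^{1-\theta}$, which depends on $N$, and then need the split at $k=N/2$ plus a tail estimate. The paper uses a cleaner observation: for $\theta<1$ each factor $(N-i)/(N-1-i+\theta)$ is $>1$ and \emph{decreasing in $N$}, hence $w_{N,k}$ is maximised at $N=k$, giving
\[
\sup_{N\ge k} w_{N,k}=w_{k,k}=\frac{k!\,\Gamma(\theta)}{\Gamma(k+\theta)}\le C_\theta\, k^{1-\theta}.
\]
This is an $N$-free bound, and $k^{1-\theta}\cdot k^{-2}=k^{-1-\theta}$ is summable, so classical dominated convergence applies directly without any splitting. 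Your detour works, but this monotonicity-in-$N$ remark is worth knowing.
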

\begin{proof}
Let $f$ be a continuous and bounded function from $\mathbb{C}$ to $\mathbb{R}$. By Proposition
\ref{empirical} (more precisely, by Corollary \ref{empirical2}):
$$\frac{1}{N} \int_{\mathbb{C}} f \, d\mu(M_N) \underset{N \rightarrow \infty}{\longrightarrow} 
\frac{1}{2 \pi} \int_0^{2 \pi} f(Re^{i \lambda}) d \lambda$$
in probability. Since $f$ is uniformly bounded, one deduces the first part of Proposition \ref{convmutilde}
by taking the expectation. Let us now prove the second part. We now suppose that 
$f$ is a bounded and continuous function from $\mathbb{C}$ to $\mathbb{R}$, equal to zero in 
a neighborhood of the circle $\{|z| = R\}$. By Proposition \ref{mutilde}, it is sufficient to prove that
$$\sum_{k=1}^{\infty} |t_{N,k, \theta} \mathds{1}_{N \geq k} - 1| \int_{\mathbb{C}} |f| \, d\mathcal{L}_k \underset{N \rightarrow \infty}
{\longrightarrow} 0,$$
where 
$$t_{N,k, \theta} = \frac{N(N-1)...(N-k+1)}{(N-1 + \theta)(N-2+ \theta)...(N-k+ \theta)}.$$
Each term of the sum converges to zero when $N$ goes to infinity. Hence by dominated convergence we are done
if we prove that:
\begin{equation}
\sum_{k=1}^{\infty} v_{k, \theta} \int_{\mathbb{C}} |f| \, d\mathcal{L}_k < \infty, \label{finite}
\end{equation}
where $$v_{k, \theta} = 1 + \sup\{ t_{N,k, \theta}, N \geq k \}.$$
Now, for $\theta \geq 1$, $v_{k, \theta} = 2$, and for $\theta < 1$:
$$v_{k, \theta} = 1 + \frac{k!(\theta-1)!}{(k-1+\theta)!},$$
which implies that $v_{k, \theta}$ is dominated by $k^{(1- \theta)_+}$ for fixed $\theta$.
On the other hand, the fact that $\log(|Z|)$ is in $\mathcal{U}^2$ implies that the integral of 
$|f|$ with respect to $\mathcal{L}_k$ decreases with $k$ at least as fast as $1/k^2$. 
This implies \eqref{finite}.
\end{proof}
\noindent
As above, it is interesting to see what happens if $\mathcal{L}$ is the distribution of
$e^{i \Theta + \rho S_{\alpha}}$, where $\rho > 0$, 
$\Theta$ is a uniform random variable on $[0,2 \pi)$ and $S_{\alpha}$ is an independent
standard symmetric stable random variable, with index $\alpha \in (0,2]$.
For the log-normal case $\alpha = 2$, Proposition \ref{convmutilde} applies directly. 
For the case $\alpha \in (1,2)$, one can apply the first part of Proposition \ref{convmutilde}, 
which gives the convergence of the average empirical measure $\tilde{\mu}_N/N$, but the second 
part cannot apply. Indeed, by using the classical tail estimates of stable random variables
 one checks that for all nonempty open sets $A \subset \mathbb{C}$:
$$\tilde{\mu}_{\infty} (A) = \theta \sum_{k=1}^{\infty} \mathcal{L}_k (A) = \infty.$$
For $\alpha = 1$, one has $\mathcal{L}_k = \mathcal{L}$ for all $k \geq 1$, and then:
$$\tilde{\mu}_N = N \mathcal{L}$$
for all $N \geq 1$. The most interesting case is $\alpha < 1$.
\begin{proposition} \label{tildealpha<1}
Let $\rho > 0$, and let $\mathcal{L}$ be the law of $e^{i \Theta + \rho S_{\alpha}}$, where
$\Theta$ is a uniform random variable on $[0,2 \pi)$ and $S_{\alpha}$ an independent
standard symmetric stable random variable, with index $\alpha <1$. 
For $\theta >0$, let $(M_N)_{N \geq 1}$ be a sequence of random matrices such that $M_N$ follows
the distribution $\mathbb{P} (N,\theta, \mathcal{L})$. 
Then the probability measure $\tilde{\mu}_N/N$ (which represents the probability distribution
of a random eigenvalue of $M_N$, chosen uniformly among the $N$ possible eigenvalues), converges
vaguely to half of the Dirac measure at zero. Moreover, for all $r, R$ such that $0 < r < R$, the 
 restriction of $\tilde{\mu}_{\infty}$ to the set $\{|z| \in (r,R) \}$ is infinite if $\alpha \geq 1/2$, 
finite if $\alpha < 1/2$, and for $\alpha < 1/(2 \vee (3-\theta))$ (in particular for $\alpha \leq 1/3$), it
 is the weak limit of the 
corresponding restriction of $\tilde{\mu}_N$ when $N$ goes to infinity. 
\end{proposition}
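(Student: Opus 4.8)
The plan is to build on Propositions~\ref{mutilde}, \ref{mutildeinfty} and \ref{alpha<1}; the only genuinely new ingredient is a sharp estimate of the mass $\mathcal{L}_k$ puts on a fixed annulus, combined with the bound $v_{k,\theta}=O(k^{(1-\theta)_+})$ already obtained in the proof of Proposition~\ref{convmutilde}. For the vague convergence of $\tilde\mu_N/N$, take $f:\mathbb{C}\to\mathbb{R}$ continuous with compact support. Then $\frac1N\int_\mathbb{C} f\,d\mu(M_N)$ is bounded in modulus by $\|f\|_\infty$ (since $\mu(M_N)$ has total mass $N$) and converges in distribution, by Proposition~\ref{alpha<1}, to $G_\theta f(0)$ with $G_\theta\sim\mathrm{Beta}(\theta/2,\theta/2)$; boundedness promotes this to convergence of expectations, giving
$$\frac1N\int_\mathbb{C} f\,d\tilde\mu_N=\mathbb{E}\left[\frac1N\int_\mathbb{C} f\,d\mu(M_N)\right]\underset{N\to\infty}{\longrightarrow}\mathbb{E}[G_\theta]\,f(0)=\tfrac12 f(0),$$
which is the asserted vague convergence to $\tfrac12\delta_0$. (A direct argument is also available: one checks $\int f\,d\mathcal{L}_k\to\tfrac12 f(0)$ as $k\to\infty$, because for large $k$ the modulus $e^{\rho k^{(1-\alpha)/\alpha}S_\alpha}$ is, with probability tending to $\tfrac12$, arbitrarily small, and, with probability tending to $\tfrac12$, arbitrarily large, and then notes that in $\tilde\mu_N/N=\frac\theta N\sum_{k\le N}t_{N,k,\theta}\mathcal{L}_k$ the weights $\frac\theta N t_{N,k,\theta}$, which are nonnegative and sum to $1$, escape to large $k$.)

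For the dichotomy, I would start from $\tilde\mu_\infty=\theta\sum_{k\ge1}\mathcal{L}_k$ (Proposition~\ref{mutildeinfty}) and the explicit description $\mathcal{L}_k=\mathrm{law}\bigl(e^{i\Theta+\rho k^{(1-\alpha)/\alpha}S_\alpha}\bigr)$ recalled before the statement, which gives, for $0<r<R$,
$$\mathcal{L}_k(\{r<|z|<R\})=\mathbb{P}\left[\frac{\log r}{\rho\,k^{(1-\alpha)/\alpha}}<S_\alpha<\frac{\log R}{\rho\,k^{(1-\alpha)/\alpha}}\right].$$
As $k\to\infty$ both endpoints tend to $0$, so, since the symmetric $\alpha$-stable density $p_\alpha$ is continuous and strictly positive at the origin, this probability is equivalent to $p_\alpha(0)\,\rho^{-1}\log(R/r)\,k^{-(1-\alpha)/\alpha}$. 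Hence $\tilde\mu_\infty(\{r<|z|<R\})=\theta\sum_k\mathcal{L}_k(\{r<|z|<R\})$ is finite iff $(1-\alpha)/\alpha>1$, i.e. iff $\alpha<1/2$, and infinite for $\alpha\ge1/2$, as claimed.

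For the weak convergence of the restrictions, assume $\alpha<1/(2\vee(3-\theta))$, set $\mathcal{A}=\{r<|z|<R\}$, and for a bounded continuous $g:\mathbb{C}\to\mathbb{R}$ write, using Proposition~\ref{mutilde},
$$\int_\mathbb{C} g\,d\bigl(\tilde\mu_N|_\mathcal{A}\bigr)=\theta\sum_{k=1}^N t_{N,k,\theta}\int_\mathcal{A} g\,d\mathcal{L}_k.$$
Setting the $k$-th term to $0$ for $k>N$, it converges to $\theta\int_\mathcal{A} g\,d\mathcal{L}_k$ because $t_{N,k,\theta}\mathds{1}_{N\ge k}\to 1$, and it is dominated by $\theta\,v_{k,\theta}\|g\|_\infty\,\mathcal{L}_k(\mathcal{A})$ with $v_{k,\theta}$ as in the proof of Proposition~\ref{convmutilde}. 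Since $v_{k,\theta}=O(k^{(1-\theta)_+})$ and, by the previous paragraph, $\mathcal{L}_k(\mathcal{A})=O(k^{-(1-\alpha)/\alpha})$, the series $\sum_k v_{k,\theta}\mathcal{L}_k(\mathcal{A})$ converges exactly when $(1-\theta)_+-(1-\alpha)/\alpha<-1$, i.e. when $\alpha<1/\bigl(2+(1-\theta)_+\bigr)$, and one checks $2+(1-\theta)_+=2\vee(3-\theta)$. Dominated convergence then gives $\int g\,d(\tilde\mu_N|_\mathcal{A})\to\theta\sum_{k\ge1}\int_\mathcal{A} g\,d\mathcal{L}_k=\int g\,d(\tilde\mu_\infty|_\mathcal{A})$, which is the desired weak convergence (the limit being a finite measure, since $\alpha<1/2$); and as $2\vee(3-\theta)<3$ for every $\theta>0$, the range contains $\alpha\le 1/3$.

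The main obstacle is the annulus estimate $\mathcal{L}_k(\mathcal{A})$ of order $k^{-(1-\alpha)/\alpha}$: it rests on the continuity and strict positivity of the symmetric stable density at $0$, together with a uniform bound near $0$ to justify the dominated-convergence step; everything else is bookkeeping, the one identity worth isolating being $2+(1-\theta)_+=2\vee(3-\theta)$, which aligns the exponent thresholds in the three cases $\alpha\ge 1/2$, $1/(2\vee(3-\theta))\le\alpha<1/2$, and $\alpha<1/(2\vee(3-\theta))$.
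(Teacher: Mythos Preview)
Your proof is correct and follows essentially the same route as the paper: the vague convergence of $\tilde{\mu}_N/N$ is obtained from Proposition~\ref{alpha<1} by bounded convergence, the finiteness dichotomy from the estimate $\mathcal{L}_k(\{r<|z|<R\})\asymp k^{1-1/\alpha}$, and the weak convergence of the restrictions by the same dominated-convergence argument as in Proposition~\ref{convmutilde}, using $v_{k,\theta}=O(k^{(1-\theta)_+})$. Your write-up is in fact more explicit than the paper's on two points---the precise asymptotic of $\mathcal{L}_k(\mathcal{A})$ via the stable density at the origin, and the identity $2+(1-\theta)_+=2\vee(3-\theta)$ that matches the exponent threshold---but these are elaborations of the same argument rather than a different approach.
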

\begin{proof}
Let $f$ be a continuous function from $\mathbb{C}$ to $\mathbb{R}$, with compact support. 
By Proposition \ref{alpha<1}, one has the convergence in distribution:
$$\frac{1}{N} \, \int_{\mathbb{C}} f \, d \mu(M_N) \underset{N \rightarrow \infty}{\longrightarrow}
 G_{\theta} f(0).$$
Since $f$ is uniformly bounded, one can take the expectation and obtain:
$$\frac{1}{N} \, \int_{\mathbb{C}} f \, d \tilde{\mu}_N \underset{N \rightarrow \infty}{\longrightarrow}
 \frac{1}{2} \, f(0),$$
which gives the first part of Proposition \ref{tildealpha<1}. Now, one checks that for all $r, R$ such that 
$0<r<R$, $\mathcal{L}_k ( \{ z \in \mathbb{C}, |z| \in (r,R) \})$ decreases as $k^{1 - 1/ \alpha}$ when $k$ goes 
to infinity, which gives the condition for the finiteness of $\tilde{\mu}_{\infty}$ restricted 
to $\{z \in \mathbb{C}, |z| \in (r,R)\}$. The proof 
of the convergence of the restriction of $\tilde{\mu}_N$ to this set toward the restriction of
 $\tilde{\mu}_{\infty}$ is similar to the proof of the second part of Proposition \ref{convmutilde}. One 
only needs to check that:
$$\sum_{k =1}^{\infty} v_{k, \theta} \mathcal{L}_k ( \{ z \in \mathbb{C}, |z| \in (r,R) \}) < \infty,$$
which is true, since $v_{k, \theta}$ is dominated by $k^{(1-\theta)_+}$ and 
$ \mathcal{L}_k ( \{ z \in \mathbb{C}, |z| \in (r,R) \})$ is dominated by $k^{1 - 1/\alpha}$, 
for $1/\alpha > 2 \vee (3-\theta)$.
\end{proof}
\subsection{The $q$-correlation}
The measures $\tilde{\mu}_{N}$ and $\tilde{\mu}_{\infty}$ give the average repartition of 
the eigenvalues (and its limit for large dimension) of a matrix following the
distribution $\mathbb{P} (N, \theta, \mathcal{L})$. It is interesting to generalize this study
to the $q$-correlation of eigenvalues for all strictly positive integers $q$, i.e. the distribution
 of the possible sequences of $q$ eigenvalues. More precisely, if $M_N$ follows the distribution 
$\mathbb{P}(N, \theta, \mathcal{L})$, let us consider, for all $q \geq 1$, the random
measure on $\mathbb{C}^q$:
$$\mu^{[q]}(M_N) = \sum_{j_1 \neq j_2 \neq ... \neq j_q} \delta_{(\omega_{j_1},..., \omega_{j_q})},$$
where $(\omega_j)_{1 \leq j \leq N}$ is a sequence containing all the eigenvalues of $M_N$, with multiplicity. 
The "$q$-correlation measure" is defined as the average of $\mu^{[q]}(M_N)$: it is a finite measure
 $\tilde{\mu}_N^{[q]}$ on $\mathbb{C}^q$, with total 
mass $N!/(N-q)!$. One checks that $\mu^{[q]}(M_N)$ has (by taking the same notation as in \eqref{muMN2})
the same distribution as the random measure
$$\nu_N  := \sum_{(w_{k,p})_{k \geq 1, 1 \leq p \leq a_{N,k}}  \in W\left(q, (a_{N,k})_{k \geq 1}\right)}
\sum_{\left(E_{k,p} \in S_{k,w_{k,p}}(T_{k,p}) \right)_{k \geq 1, 1 \leq p \leq a_{N,k}}} \Delta \left(
(E_{k,p})_{k \geq 1, 1 \leq p \leq a_{N,k}} \right),$$
where $W(q, (a_{N,k})_{k \geq 1})$ is the set of families $(w_{k,p})_{k \geq 1, 1 \leq p \leq a_{N,k}}$
of nonnegative integers, with total sum $q$, $S_{k,w_{k,p}}(T_{k,p})$ is the family of subsets 
of $k$-th roots of $T_{k,p}$, with cardinality $w_{k,p}$, and $ \Delta \left(
(E_{k,p})_{k \geq 1, 1 \leq p \leq a_{N,k}} \right)$ is defined by:
$$\Delta \left(
(E_{k,p})_{k \geq 1, 1 \leq p \leq a_{N,k}} \right) = \sum_{\sigma \in \Sigma_q}
\delta_{(\omega_\sigma(j))_{1 \leq j \leq q}}$$
for a sequence $(\omega_j)_{1 \leq j \leq q}$ containing all the elements of the sets $E_{k,p}$, each
 element appearing a number of times equal to the number of sets $E_{k,p}$ containing it. 
Let us now take the conditional expectation with respect to $(a_{N,k})_{k \geq 1}$. One obtains the following
random measure:
$$\mathbb{E} [\nu_N \, | \, (a_{N,k})_{k \geq 1} ] = 
\sum_{(w_{k,p})_{k \geq 1, 1 \leq p \leq a_{N,k}}  \in W\left(q, (a_{N,k})_{k \geq 1}\right)}
\mathcal{L}^{[q]} \left( (w_{k,p})_{k \geq 1, 1 \leq p \leq a_{N,k}} \right),$$
for
$$\mathcal{L}^{[q]} \left( (w_{k,p})_{k \geq 1, 1 \leq p \leq a_{N,k}} \right)
 = \frac{1}{\prod_{k \geq 1, 1 \leq p \leq a_{N,k}} w_{k,p}!}  
\, \sum_{\sigma \in \Sigma_q} \sigma . \tilde{\mathcal{L}}^{[q]} \left( (w_{k,p})_{k \geq 1, 1 \leq p
 \leq a_{N,k}} \right),$$
where $$\sigma . \tilde{\mathcal{L}}^{[q]} \left( (w_{k,p})_{k \geq 1, 1 \leq p
 \leq a_{N,k}} \right)$$ is the image, by the permutation $\sigma$ of the coordinates, of 
$$\tilde{\mathcal{L}}^{[q]} \left( (w_{k,p})_{k \geq 1, 1 \leq p
 \leq a_{N,k}} \right) = \bigotimes_{k \geq 1, 1 \leq p \leq a_{N,k}} \mathcal{L}_k^{[w_{k,p}]},
$$
for 
$$\mathcal{L}_k^{[w_{k,p}] } = \sum_{z_1 \neq ... \neq z_{w_{k,p}} \in \mathbb{U}_k} 
(z_1,z_2,...,z_{w_{k,p}}). \mathcal{L}_k,$$
where $(z_1,z_2,...,z_{w_{k,p}}). \mathcal{L}_k$ is the image of $\mathcal{L}_k$ by the
 application $z \mapsto (zz_1,zz_2,...,zz_{w_{k,p}})$. By defining $\lambda_{k,r}$ as 
the number of indices $p$ such that $w_{k,p} = r$, one deduces:
$$ \mathbb{E} [\nu_N \, | \, (a_{N,k})_{k \geq 1} ] = 
\sum_{(\lambda_{k,r})_{k \geq 1, r \geq 0} \in L\left(q, (a_{N,k})_{k \geq 1}\right)} \frac{
\prod_{k \geq 1} a_{N,k}!}{\prod_{k \geq 1, r \geq 0} \left[\lambda_{k,r}! (r!)^{\lambda_{k,r}} \right]}
 \sum_{\sigma \in \Sigma_q}  \sigma. \left[\bigotimes_{k \geq 1,r \geq 0}
 \left( \mathcal{L}_k^{[r]} \right)^{\otimes \lambda_{k,r}} \right]
$$
where $L\left(q, (a_{N,k})_{k \geq 1}\right)$ is the set of families $(\lambda_{k,r})_{k \geq 1,r \geq 0}$ 
of nonnegative integers such that
$$\sum_{k \geq 1, r \geq 0} r \lambda_{k,r} = q$$
and for all $k \geq 1$:
$$\sum_{r \geq 0} \lambda_{k,r} = a_{N,k}.$$
Note that for all $k \geq 1$, the measure $\mathcal{L}_k^{[0]}$ is trivial (it is the 
unique probability measure on a space with one element), hence,  one can remove it in a tensor product.
One deduces:
\begin{align*}
\mathbb{E} [\nu_N \, | \, (a_{N,k})_{k \geq 1} ] & = 
\sum_{(\lambda_{k,r})_{k, r \geq 1} \in \tilde{L}(q)} \prod_{k \geq 1} \frac{
 a_{N,k}!}{ \left( a_{N,k} - \sum_{r \geq 1}
 \lambda_{k,r} \right)! \prod_{r \geq 1} \left[\lambda_{k,r}! (r!)^{\lambda_{k,r}} \right]} \;... \\
& ... \; \sum_{\sigma \in \Sigma_q}  \sigma. \left[\bigotimes_{k \geq 1,r \geq 1}
 \left( \mathcal{L}_k^{[r]} \right)^{\otimes \lambda_{k,r}} \right]
\end{align*}
\noindent
where $\tilde{L}(q)$ is the set of families $(\lambda_{k,r})_{k,r \geq 1}$ of nonnegative integers such
that
$$\sum_{k \geq 1, r \geq 1} r \lambda_{k,r} = q$$
and where the inverse of the factorial of a strictly negative integer is considered to be equal to zero. 
Then:
$$\tilde{\mu}_N^{[q]} = \sum_{(\lambda_{k,r})_{k, r \geq 1} \in \tilde{L}(q)}
u^{\left[\sum_{r \geq 1} \lambda_{k,r} \right]_{k \geq 1}}_{N, \theta} 
 \frac{1}
{\prod_{k, r \geq 1} \left[\lambda_{k,r}! (r!)^{\lambda_{k,r}} \right]} \;
 \sum_{\sigma \in \Sigma_q}  \sigma. \left[\bigotimes_{k \geq 1,r \geq 1}
 \left( \mathcal{L}_k^{[r]} \right)^{\otimes \lambda_{k,r}} \right],$$
where, for all sequences of nonnegative integers $(\lambda_k)_{k \geq 1}$
for which the set of $k$ such that $\lambda_k > 0$ is finite:
 $$u^{[\lambda_k]_{k \geq 1}}_{N, \theta} = \mathbb{E} \left[ \prod_{k \geq 1}
\frac{a_{N,k}!}{(a_{N,k} - \lambda_k)!} \right]$$ 
(recall that this quantity depends on $\theta$, as the law of $(a_{N,k})_{k \geq 1}$). 
 By elementary combinatorial arguments, one can prove that 
$$u^{[\lambda_k]_{k \geq 1}}_{N, \theta} = \frac{\prod_{j = 1}^{\sum_{k \geq 0} \lambda_k} 
(N+1-j)} { \prod_{k \geq 1} k^{\lambda_k} } \, . \mathcal{P},$$
where $\mathcal{P}$ is the probability that in a random permutation following the Ewens($\theta$) distribution,
the integers from $1 + \sum_{j < k} \lambda_j$ and $\sum_{j \leq k} \lambda_j$ lie
in different $k$-cycles, for all $k \geq 1$. By the Feller coupling, one deduces that
$$u^{[\lambda_k]_{k \geq 1}}_{N, \theta} = \mathds{1}_{\sum_{k \geq 1} k \lambda_k \leq N}
\left( \prod_{j=1}^{\sum_{k \geq 1} k \lambda_k}  \, \frac{N+1-j}{N+\theta-j}  \right) \,
 \frac{\theta^{\sum_{k \geq 1} \lambda_k} }{\prod_{k \geq 1} k^{\lambda_k} }.$$
Then by denoting $\tilde{L} (N,q)$ the set of families $(\lambda_{k,r})_{k,r \geq 1}$ of nonnegative
 integers such that  
$$\sum_{k \geq 1, r \geq 1} r \lambda_{k,r} = q$$
and 
$$\sum_{k \geq 1, r \geq 1} k \lambda_{k,r} \leq N,$$
one obtains the following result:
\begin{proposition} \label{q28}
Let $\theta > 0$ and let $\mathcal{L}$ be a probability distribution on $\mathbb{C}^*$. 
If one takes the notation above, the $q$-correlation measure
 $\tilde{\mu}_N^{[q]}$  associated with the eigenvalues of a random matrix following
the distribution $\mathbb{P} (N, \theta, \mathcal{L})$ is given by the formula:
\begin{align*}
\tilde{\mu}_N^{[q]} = \sum_{(\lambda_{k,r})_{k, r \geq 1} \in \tilde{L}(N,q)} & 
 \left( \prod_{j=1}^{\sum_{k,r \geq 1} k \lambda_{k,r} } \, \frac{N+1-j}{N+\theta-j}  \right)  \, \times \\ & ...
\prod_{k, r \geq 1} \left[\frac{1}{\lambda_{k,r}!} \left( \frac{\theta}{r!k} \right)^{\lambda_{k,r}} \right]  \;
 \sum_{\sigma \in \Sigma_q}  \sigma. \left[\bigotimes_{k \geq 1,r \geq 1}
 \left( \mathcal{L}_k^{[r]} \right)^{\otimes \lambda_{k,r}} \right]
\end{align*}
\end{proposition}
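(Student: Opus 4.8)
The plan is to unfold $\tilde{\mu}_N^{[q]} = \mathbb{E}\big[\mu^{[q]}(M_N)\big]$ along the decomposition of the eigenvalues into cycles, conditioning \emph{first} on the cycle type of $\sigma_N$ — equivalently on the vector of cycle counts $(a_{N,k})_{k\ge1}$ — and only afterwards averaging over that cycle type. The description of the eigenvalues from Section~\ref{sec::1.2} shows that the $k$ eigenvalues attached to the $p$-th $k$-cycle are the $k$-th roots of $T_{k,p}$, whose law (phases included) is the push-forward of $\mathcal{L}_k$ to the $k$ roots; this is exactly the content of the equality in law $\mu^{[q]}(M_N)\stackrel{d}{=}\nu_N$ recorded before the statement. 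So the only genuinely new ingredient needed is an explicit evaluation of the joint descending factorial moments $u^{[\lambda_k]_{k\ge1}}_{N,\theta}=\mathbb{E}\big[\prod_{k\ge1}a_{N,k}!/(a_{N,k}-\lambda_k)!\big]$ of the cycle counts under the Ewens($\theta$) measure.

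First I would carry out the conditional expectation with respect to $(a_{N,k})_{k\ge1}$. Choosing $q$ ordered, pairwise distinct eigenvalues amounts to choosing, in each cycle $(k,p)$, an unordered subset of $w_{k,p}$ of its $k$ roots with $\sum_{k,p}w_{k,p}=q$, together with a global linear order on the chosen eigenvalues. Since, given the moduli, the phases inside a cycle are uniform and the cycles are independent, the conditional law of the resulting unordered configuration factorizes as $\bigotimes_{k,p}\mathcal{L}_k^{[w_{k,p}]}$; summing over the $\Sigma_q$-action that restores the order, and dividing by $\prod_{k,p}w_{k,p}!$ to pass from ordered to unordered within-cycle selections, yields $\mathbb{E}[\nu_N\mid(a_{N,k})]$ in the form displayed above. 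I would then reparametrize by the multiplicities $\lambda_{k,r}:=\#\{p:w_{k,p}=r\}$: the sum over admissible $(w_{k,p})$ becomes a sum over $(\lambda_{k,r})_{k,r\ge1}\in\tilde{L}(q)$, weighted by the multinomial factor $\prod_k a_{N,k}!/\big[(a_{N,k}-\sum_{r\ge1}\lambda_{k,r})!\,\prod_{r\ge1}\lambda_{k,r}!\big]$, with $\prod_{k,p}w_{k,p}!$ collapsing to $\prod_{k,r}(r!)^{\lambda_{k,r}}$ and $\bigotimes_{k,p}\mathcal{L}_k^{[w_{k,p}]}$ to $\bigotimes_{k,r}\big(\mathcal{L}_k^{[r]}\big)^{\otimes\lambda_{k,r}}$ (the measure $\mathcal{L}_k^{[0]}$ being trivial and dropping out).

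The last step, which I expect to be the main obstacle, is to take the outer expectation over $(a_{N,k})_{k\ge1}$, i.e.\ to evaluate $u^{[\lambda_k]_{k\ge1}}_{N,\theta}$ with $\lambda_k=\sum_{r\ge1}\lambda_{k,r}$. The quantity $a_{N,k}!/(a_{N,k}-\lambda_k)!$ counts ordered $\lambda_k$-tuples of distinct $k$-cycles of $\sigma_N$, so by linearity $u^{[\lambda_k]}_{N,\theta}$ is the number of ways to pick disjoint ordered families of $k$-element cyclically ordered blocks inside $\{1,\dots,N\}$ — namely $\prod_{j=1}^{\sum_{k}k\lambda_k}(N+1-j)$ divided by $\prod_k k^{\lambda_k}$ — multiplied by the probability that any one such fixed family is exactly realized as a set of cycles of $\sigma_N$. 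Using the Feller coupling (the $\xi_j$ of parameter $\theta/(\theta+j-1)$, with cycles read off as spacings between consecutive $1$'s) this probability is computed as a product over the prescribed blocks, each contributing a factor of order $\theta/k$ together with the normalizing ratio, giving
$$u^{[\lambda_k]_{k\ge1}}_{N,\theta}=\mathds{1}_{\sum_{k\ge1}k\lambda_k\le N}\Big(\prod_{j=1}^{\sum_{k\ge1}k\lambda_k}\frac{N+1-j}{N+\theta-j}\Big)\,\frac{\theta^{\sum_{k\ge1}\lambda_k}}{\prod_{k\ge1}k^{\lambda_k}}.$$
Substituting this into the reparametrized conditional expectation and regrouping the factorials — $\frac1{\prod_{k,r}[\lambda_{k,r}!(r!)^{\lambda_{k,r}}]}\cdot\frac{\theta^{\sum\lambda_{k,r}}}{\prod k^{\sum_r\lambda_{k,r}}}=\prod_{k,r}\frac1{\lambda_{k,r}!}\big(\tfrac{\theta}{r!\,k}\big)^{\lambda_{k,r}}$ — produces the asserted formula, the indicator being precisely what cuts the index set down from $\tilde{L}(q)$ to $\tilde{L}(N,q)$. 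The care required is entirely in this combinatorial bookkeeping: matching the $1/(\lambda_{k,r}!(r!)^{\lambda_{k,r}})$ symmetry weights against the multinomial coefficient from the reparametrization and against the factorial-moment count, and justifying the Feller-coupling evaluation of $u^{[\lambda_k]}_{N,\theta}$; the manipulations with the tensor products of the $\mathcal{L}_k^{[r]}$'s are then purely formal.
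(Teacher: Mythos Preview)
Your proposal is correct and follows essentially the same route as the paper: condition on the cycle counts $(a_{N,k})_{k\ge1}$, decompose the ordered $q$-tuple of eigenvalues according to the profile $(w_{k,p})$, reparametrize by the multiplicities $\lambda_{k,r}$, and then evaluate the descending factorial moments $u^{[\lambda_k]}_{N,\theta}$ via the combinatorial count of ordered disjoint cycle families together with the Feller coupling. Even the final regrouping of constants into $\prod_{k,r}\frac{1}{\lambda_{k,r}!}\big(\tfrac{\theta}{r!\,k}\big)^{\lambda_{k,r}}$ and the reduction of the index set from $\tilde{L}(q)$ to $\tilde{L}(N,q)$ via the indicator $\mathds{1}_{\sum k\lambda_k\le N}$ match the paper's derivation, which is carried out in the paragraphs immediately preceding the proposition.
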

\begin{remark}
One can check that Proposition \ref{mutilde} is a particular case of Proposition \ref{q28} for $q=1$. Note that for $N, q \geq 2$, the $q$-correlation measure $\tilde{\mu}_N^{[q]}$ is not absolutely continuous with respect to 
Lebesgue measure. This means that the law of the point process of the eigenvalues cannot be described 
with correlation functions, as for determinantal processes.
\end{remark}
\noindent
In a similar way, we can define the $q$-correlation measure $\tilde{\mu}_{\infty}^{[q]}$, 
associated with the random point measure $\mu_{\infty}$ defined by \eqref{muinfty}. We obtain
the following:
\begin{proposition}
Let $\theta > 0$ and $\mathcal{L}$ be a probability distribution on $\mathbb{C}^*$. 
Then, the $q$-correlation measure
 $\tilde{\mu}_{\infty}^{[q]}$  associated with the point measure $\mu_{\infty}$ defined by \eqref{muinfty}
is given by:
$$
\tilde{\mu}_{\infty}^{[q]} = \sum_{(\lambda_{k,r})_{k, r \geq 1} \in \tilde{L}(q)} 
 \prod_{k, r \geq 1} \left[\frac{1}{\lambda_{k,r}!} \left( \frac{\theta}{r!k} \right)^{\lambda_{k,r}} \right]  \;
 \sum_{\sigma \in \Sigma_q}  \sigma. \left[\bigotimes_{k \geq 1,r \geq 1}
 \left( \mathcal{L}_k^{[r]} \right)^{\otimes \lambda_{k,r}} \right].$$
\end{proposition}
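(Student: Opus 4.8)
The plan is to repeat, essentially verbatim, the combinatorial derivation that precedes Proposition \ref{q28}, the only genuinely new input being the computation of the relevant factorial moments of a family of independent Poisson variables in place of those of the cycle counts of an Ewens permutation. First I would define the random measure $\mu^{[q]}_{\infty}$ on $\mathbb{C}^q$ exactly as in the finite case, namely
$$\mu^{[q]}_{\infty} = \sum_{j_1 \neq j_2 \neq \dots \neq j_q} \delta_{(\omega_{j_1},\dots,\omega_{j_q})},$$
where $(\omega_j)_{j \geq 1}$ enumerates the atoms of $\mu_{\infty}$ counted with multiplicity, and set $\tilde{\mu}^{[q]}_{\infty} := \mathbb{E}[\mu^{[q]}_{\infty}]$ (a possibly infinite measure; already $\tilde{\mu}^{[1]}_{\infty} = \tilde{\mu}_{\infty} = \theta \sum_{k} \mathcal{L}_k$ has infinite total mass). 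All quantities below being nonnegative, the asserted identity is to be read in $[0,\infty]$ when tested against nonnegative Borel functions on $\mathbb{C}^q$, so no convergence issue arises and Tonelli applies throughout.

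Writing $\mu_{\infty}$ in the form \eqref{muinfty}, as a superposition over pairs $(k,p)$ with $1 \leq p \leq a_k$ of the sets of $k$-th roots of $T_{k,p}$ (the $a_k$ independent Poisson of parameter $\theta/k$, the $T_{k,p}$ independent with law the $k$-fold multiplicative convolution of $\mathcal{L}$, the two families independent), I would split the chosen ordered $q$-tuple of distinct eigenvalues according to which blocks $(k,p)$ they come from, record $w_{k,p}$ for the number of them taken from the $k$-th roots of $T_{k,p}$, symmetrise over $\sigma \in \Sigma_q$, and regroup by letting $\lambda_{k,r}$ be the number of indices $p$ with $w_{k,p} = r$. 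Conditioning on $(a_k)_{k \geq 1}$ then reproduces verbatim the finite-$N$ expression with $a_{N,k}$ replaced by $a_k$, namely
$$\mathbb{E}\bigl[\mu^{[q]}_{\infty} \,\big|\, (a_k)_{k\geq 1}\bigr] = \sum_{(\lambda_{k,r})_{k,r\geq 1}\in\tilde{L}(q)} \left( \prod_{k\geq 1} \frac{a_k!}{\bigl(a_k - \sum_{r\geq 1}\lambda_{k,r}\bigr)!\,\prod_{r\geq 1}\bigl[\lambda_{k,r}!\,(r!)^{\lambda_{k,r}}\bigr]} \right) \sum_{\sigma\in\Sigma_q}\sigma.\left[\bigotimes_{k,r\geq 1}\bigl(\mathcal{L}_k^{[r]}\bigr)^{\otimes\lambda_{k,r}}\right],$$
with the usual convention that the reciprocal of a negative factorial is zero.

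It remains to take the expectation over $(a_k)_{k\geq 1}$, which enters only through $u^{[\lambda_k]_{k\geq1}}_{\infty,\theta} := \mathbb{E}\bigl[\prod_{k\geq1} a_k!/(a_k-\lambda_k)!\bigr]$ with $\lambda_k := \sum_{r\geq1}\lambda_{k,r}$. This is the one place where a new computation is needed, and it is elementary: by independence the expectation factorises, and the $\lambda_k$-th descending factorial moment of a Poisson$(\theta/k)$ variable equals $(\theta/k)^{\lambda_k}$, so $u^{[\lambda_k]_{k\geq1}}_{\infty,\theta} = \prod_{k\geq1}(\theta/k)^{\lambda_k}$. Substituting, using $\prod_k(\theta/k)^{\lambda_k} = \prod_{k,r}(\theta/k)^{\lambda_{k,r}}$, and absorbing the factors $1/\bigl(\lambda_{k,r}!\,(r!)^{\lambda_{k,r}}\bigr)$ yields exactly
$$\tilde{\mu}^{[q]}_{\infty} = \sum_{(\lambda_{k,r})_{k,r\geq1}\in\tilde{L}(q)} \prod_{k,r\geq1}\left[\frac{1}{\lambda_{k,r}!}\left(\frac{\theta}{r!\,k}\right)^{\lambda_{k,r}}\right] \sum_{\sigma\in\Sigma_q}\sigma.\left[\bigotimes_{k,r\geq1}\bigl(\mathcal{L}_k^{[r]}\bigr)^{\otimes\lambda_{k,r}}\right].$$
Equivalently, one may deduce the formula term by term from Proposition \ref{q28} by letting $N \to \infty$, since $u^{[\lambda_k]}_{N,\theta} \to u^{[\lambda_k]}_{\infty,\theta}$ (the prefactor $\prod_j \frac{N+1-j}{N+\theta-j} \to 1$ and the constraint $\sum_k k\lambda_k \leq N$ defining $\tilde{L}(N,q)$ becomes vacuous, so $\tilde{L}(N,q) \uparrow \tilde{L}(q)$), combined with monotone convergence against nonnegative test functions. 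The only obstacle here is organisational: one must carry the block-by-block bookkeeping of the symmetrisation over $\Sigma_q$ and of the multiplicities $\lambda_{k,r}$ through without error, exactly as in the finite-dimensional case, and keep in mind that $\tilde{L}(q)$ is now an infinite index set, so that $\tilde{\mu}^{[q]}_{\infty}$ is in general an infinite (but unambiguously defined, as a sum of positive measures) measure.
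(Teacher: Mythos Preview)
Your proposal is correct and follows exactly the approach the paper intends: the paper states this proposition without a separate proof, writing only ``In a similar way, we can define the $q$-correlation measure $\tilde{\mu}_{\infty}^{[q]}$ \ldots\ We obtain the following'', so the implicit argument is precisely to rerun the derivation leading to Proposition~\ref{q28} with the Poisson counts $(a_k)_{k\geq 1}$ in place of $(a_{N,k})_{k\geq 1}$. Your identification of the one new step, namely $\mathbb{E}\bigl[\prod_k a_k!/(a_k-\lambda_k)!\bigr]=\prod_k(\theta/k)^{\lambda_k}$ via the factorial moments of independent Poisson variables, is exactly what replaces the Feller-coupling computation of $u^{[\lambda_k]}_{N,\theta}$.
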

\noindent
Since we have an explicit expression for the correlations measures, we can expect some limit theorems 
when the dimension $N$ goes to infinity. In fact, we have the following proposition:
\begin{proposition} \label{convmutildeq}
Let $\theta > 0$, $q \geq 1$ integer, $\mathcal{L}$ a probability measure on $\mathbb{C}^*$, $(M_N)_{N \geq 1}$
a sequence of random matrices such that $M_N$ follows the distribution $\mathbb{P} (N, \theta, 
\mathcal{L})$. We suppose that if $Z$ follows the distribution $\mathcal{L}$,
then $\log (|Z|)$ is integrable, and we define $R := \exp \left( \mathbb{E}[\log (|Z|)] \right)$.
Then the probability measure $\frac{1}{N(N-1)...(N-q+1)} \, \tilde{\mu}_N^{[q]}$ (which represents 
the probability distribution
of a random sequence of $q$ eigenvalues of $M_N$, chosen uniformly among the $N(N-1)...(N-q+1)$ possible
 sequences), converges
weakly to the $q$-th power of the uniform distribution on the circle $\{z = R\}$. Moreover, if $\log (|Z|)$ is in 
$\mathcal{U}^{q+1}$, then for $0 < R_1 < R < R_2$, the restriction of $\tilde{\mu}^{[q]}_N$ to the set
$\{(z_1,...,z_q) \in \mathbb{C}^q, \forall r \leq q, |z_r| \notin (R_1,R_2)\}$, 
converges weakly to the corresponding restriction of $\tilde{\mu}^{[q]}_{\infty}$. 
\end{proposition}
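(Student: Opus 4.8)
The plan is to establish the two assertions by quite different routes: the convergence of the normalised $q$-correlation measure to $(\mu^{U}_{R})^{\otimes q}$ by a soft argument resting on Corollary \ref{empirical2}, and the convergence of the restricted measures by applying dominated convergence directly to the explicit formula of Proposition \ref{q28} (and to the companion formula for $\tilde{\mu}_{\infty}^{[q]}$ stated just above it).

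For the first part, since $\tilde{\mu}_N^{[q]}/(N(N-1)\cdots(N-q+1))$ and $(\mu^{U}_R)^{\otimes q}$ are probability measures it suffices to test against an arbitrary bounded continuous $f\colon\mathbb{C}^q\to\mathbb{R}$. Let $(\omega_j)_{1\le j\le N}$ list the eigenvalues of $M_N$ with multiplicity, so that $\mu(M_N)=\sum_j\delta_{\omega_j}$ has total mass $N$ and $\mu^{[q]}(M_N)=\sum_{j_1\neq\cdots\neq j_q}\delta_{(\omega_{j_1},\dots,\omega_{j_q})}$. Since the weak topology on the probability measures on $\mathbb{C}$ is Polish and metrisable, Corollary \ref{empirical2} is equivalent to $\mu(M_N)/N\to\mu^{U}_R$ in probability; as $\nu\mapsto\nu^{\otimes q}$ is continuous from $\mathcal{P}(\mathbb{C})$ to $\mathcal{P}(\mathbb{C}^q)$, this gives $\bigl(\mu(M_N)/N\bigr)^{\otimes q}\to(\mu^{U}_R)^{\otimes q}$ in probability, hence $\int f\,d\bigl(\mu(M_N)/N\bigr)^{\otimes q}\to\int f\,d(\mu^{U}_R)^{\otimes q}$ in probability and (the integrands being bounded by $\|f\|_\infty$) also in expectation. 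Now $\mu(M_N)^{\otimes q}$ and $\mu^{[q]}(M_N)$ differ only by the diagonal part $\sum_{(j_1,\dots,j_q)\text{ not all distinct}}\delta_{(\omega_{j_1},\dots,\omega_{j_q})}$, whose total mass is $N^q-N(N-1)\cdots(N-q+1)=O(N^{q-1})$, and $\tilde{\mu}_N^{[q]}=\mathbb{E}[\mu^{[q]}(M_N)]$; therefore
$$\frac{1}{N(N-1)\cdots(N-q+1)}\int f\,d\tilde{\mu}_N^{[q]}=\frac{N^q}{N(N-1)\cdots(N-q+1)}\,\mathbb{E}\left[\int f\,d\bigl(\mu(M_N)/N\bigr)^{\otimes q}\right]+O(1/N),$$
and letting $N\to\infty$ the right-hand side converges to $\int f\,d(\mu^{U}_R)^{\otimes q}$, which is the first assertion.

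For the second part, set $B:=\{z\in\mathbb{C}:|z|\notin(R_1,R_2)\}$ and $A:=B^q$, so that $\tilde{\mu}_N^{[q]}|_A$ and $\tilde{\mu}_{\infty}^{[q]}|_A$ are obtained by restricting to $A$ each summand of the respective formulas. Fixing $(\lambda_{k,r})\in\tilde{L}(q)$ and writing $m:=\sum_{k,r}k\lambda_{k,r}$, in the $N$-th summand only the scalar prefactor $\mathds{1}_{m\le N}\prod_{j=1}^{m}\frac{N+1-j}{N+\theta-j}$ depends on $N$ and it tends to $1$, so each summand converges in total variation to the corresponding summand of $\tilde{\mu}_{\infty}^{[q]}$; I then need an $N$-uniform, $\tilde{L}(q)$-summable majorant for the total mass of the $N$-th summand restricted to $A$. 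The key observation is that every coordinate of $(z_1,\dots,z_r)\cdot\mathcal{L}_k$ has modulus distributed like $|T_{k,1}|^{1/k}$, so $\mathcal{L}_k^{[r]}$ restricted to $B^r$ has total mass $(k!/(k-r)!)\,\mathbb{P}[|T_{k,1}|^{1/k}\notin(R_1,R_2)]$, whence the total mass of the $N$-th summand restricted to $A$ equals
$$q!\,\mathds{1}_{m\le N}\left(\prod_{j=1}^{m}\frac{N+1-j}{N+\theta-j}\right)\prod_{k,r\ge1}\frac{1}{\lambda_{k,r}!}\left(\frac{\theta}{r}\binom{k-1}{r-1}\,\mathbb{P}\bigl[|T_{k,1}|^{1/k}\notin(R_1,R_2)\bigr]\right)^{\lambda_{k,r}}.$$
Writing $\bar{L}_k:=\tfrac1k\sum_{j=1}^k\log|Z_j|$ and using $\log R=\mathbb{E}[\log|Z|]$, with $\delta:=\min(\log(R/R_1),\log(R_2/R))>0$ the hypothesis $\log|Z|\in\mathcal{U}^{q+1}$ gives $\mathbb{P}[|T_{k,1}|^{1/k}\notin(R_1,R_2)]=\mathbb{P}[|\bar{L}_k-\log R|\ge\delta]\le Ck^{-(q+1)}$, while $\sup_{N\ge m}\prod_{j=1}^{m}\frac{N+1-j}{N+\theta-j}\le C_\theta\,m^{(1-\theta)_+}$ (it grows only like $m^{1-\theta}$ as $N\downarrow m$ when $\theta<1$, and is $\le 1$ when $\theta\ge1$). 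Thus the displayed mass is dominated by $C'\,m^{(1-\theta)_+}\prod_{k,r}\frac{a_{k,r}^{\lambda_{k,r}}}{\lambda_{k,r}!}$ with $a_{k,r}\le C''k^{r-q-2}$, and convergence of the series over $\tilde{L}(q)$ follows from the generating-function identity $\sum_{(\lambda)\in\tilde{L}(q)}\prod_{k,r}\frac{a_{k,r}^{\lambda_{k,r}}}{\lambda_{k,r}!}=[x^q]\exp\bigl(\sum_{r\ge1}x^r\sum_k a_{k,r}\bigr)$, finite because $\sum_k a_{k,r}<\infty$ for $r\le q$; when $\theta<1$ the extra factor $m^{1-\theta}\le\sum_{k,r}k^{1-\theta}\lambda_{k,r}$ is absorbed after one differentiation of this generating function, reducing matters to $\sum_{k,r}k^{1-\theta}a_{k,r}<\infty$, still valid because $1-\theta+r-q-2<-1$ for $r\le q$. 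Dominated convergence then gives $\tilde{\mu}_N^{[q]}|_A\to\tilde{\mu}_{\infty}^{[q]}|_A$ in total variation, a fortiori weakly, and incidentally shows $\tilde{\mu}_{\infty}^{[q]}|_A$ to be finite.

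The step I expect to be the main obstacle is the uniform-in-$N$ control of the Ewens prefactor $\prod_{j=1}^{m}\frac{N+1-j}{N+\theta-j}$ in the range $\theta<1$: one has to check that the power of $k$ it contributes, together with the binomial factor $\binom{k-1}{r-1}\sim k^{r-1}/(r-1)!$ coming from the number of $r$-element subsets of the $k$-th roots of unity, is still beaten by the decay $k^{-(q+1)}$ furnished by the $\mathcal{U}^{q+1}$ hypothesis; this is precisely why the exponent $q+1$, and not merely $q$, is required in the statement.
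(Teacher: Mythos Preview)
Your proof is correct and follows essentially the same strategy as the paper: for the first part you compare $\mu^{[q]}(M_N)$ with $\mu(M_N)^{\otimes q}$ and invoke Corollary~\ref{empirical2} (the paper does this via product test functions rather than the continuity of $\nu\mapsto\nu^{\otimes q}$, but the content is identical), and for the second part you apply dominated convergence to the explicit formula of Proposition~\ref{q28}, bounding the Ewens prefactor by $m^{(1-\theta)_+}$ and using the $\mathcal{U}^{q+1}$ hypothesis for the $k^{-(q+1)}$ decay. The only cosmetic difference is in the final summability bookkeeping: the paper distributes $m^{(1-\theta)_+}$ multiplicatively as $\prod_{k,r}k^{\lambda_{k,r}(1-\theta)_+}$ and reduces to a crude product bound, whereas you bound it additively by $\sum_{k,r}k^{1-\theta}\lambda_{k,r}$ and use a generating-function identity; both routes give the needed convergence and your observation that the critical exponent check is $1-\theta+r-q-2<-1$ for $r\le q$ is exactly the paper's.
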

\begin{proof}
Recall that $\tilde{\mu}_N^{[q]}$ is the average of the random measure:
$$\mu^{[q]}(M_N) = \sum_{j_1 \neq j_2 \neq ... \neq j_q} \delta_{(\omega_{j_1},..., \omega_{j_q})},$$
where $(\omega_j)_{1 \leq j \leq N}$ is a sequence containing all the eigenvalues of $M_N$, with multiplicity.
Let $f_1,...,f_q$ be bounded, continuous functions from $\mathbb{C}$ to $\mathbb{R}_+$. One has:
$$I := \int_{\mathbb{C}^q} f_1(z_1)...f_q(z_q) \, d \tilde{\mu}_N^{[q]} (z_1,...,z_q)
=  \mathbb{E} \left[\sum_{j_1 \neq j_2 \neq ... \neq j_q} f_1(\omega_{j_1})...f_q(\omega_{j_q}) \right],$$
which implies, 
\begin{equation}
 J - ||f_1||_{\infty}... ||f_q||_{\infty} [N^q - N(N-1)...(N-q+1)] \leq I \leq J, \label{IJ}
\end{equation}
for 
$$J := \mathbb{E} \left[ \prod_{r=1}^q \left( \sum_{j=1}^N  f_r(\omega_j) \right) \right]$$
or equivalently
$$J = \mathbb{E} \left[ \prod_{r=1}^q  \left( \int_{\mathbb{C}} f_r \, d \mu(M_N) \right) \right].$$
Now, by Proposition \ref{empirical} and Corollary \ref{empirical2}, 
$$\left( \frac{1}{N} \int_{\mathbb{C}} f_r  \, d \mu(M_N) \right)_{1 \leq r \leq q} 
\underset{N \rightarrow \infty}{\longrightarrow} \left( \frac{1}{2\pi} \, \int_{0}^{2 \pi} f_r(Re^{i \lambda}) d
\lambda \right)_{1 \leq r \leq q}$$
in probability. By applying the bounded, continuous function from $\mathbb{R}^q$ to $\mathbb{R}$:
$$ (x_1,...,x_q) \mapsto (|x_1|...|x_q|) \wedge ( ||f_1||_{\infty}... ||f_q||_{\infty}),$$
one deduces that
$$ \frac{J}{N^q} \, \underset{N \rightarrow \infty}{\longrightarrow} \,
\frac{1}{(2 \pi)^q} \, \int_{[0,2 \pi]^q} f_1(Re^{i \lambda_1})...f_q(Re^{i \lambda_q}) d\lambda_1...d\lambda_q.$$
By using the inequalities \eqref{IJ}, one
obtains that
$$ \frac{I}{N(N-1)...(N-q+1)} \, \underset{N \rightarrow \infty}{\longrightarrow} \,
\frac{1}{(2 \pi)^q} \, \int_{[0,2 \pi]^q} f_1(Re^{i \lambda_1})...f_q(Re^{i \lambda_q}) d\lambda_1...d\lambda_q,$$
which gives the first part of Proposition \ref{convmutildeq}.
In order to prove the second part, let us suppose that $0<R_1<R<R_2$ and that $f$ is a bounded, 
continuous function from $\mathbb{C}^q$ to $\mathbb{R}$, vanishing if one of the coordinates has a modulus
between $R_1$ and $R_2$. It is sufficient to prove:
\begin{align*}
& \sum_{(\lambda_{k,r})_{k, r \geq 1} \in \tilde{L}(q)} \left|\mathds{1}_{N \geq \sum_{k, r \geq 1} k \lambda_{k,r}}
 \; t_{N,\sum_{k, r \geq 1} k \lambda_{k,r}, \theta} - 1 \right| \, \times \\ &  \qquad...
 \prod_{k, r \geq 1} \left[\frac{1}{\lambda_{k,r}!} \left( \frac{\theta}{r!k} \right)^{\lambda_{k,r}} \right]  \;
 \sum_{\sigma \in \Sigma_q}  \int_{\mathbb{C}^q} f \, d \left( \sigma. \left[\bigotimes_{k \geq 1,r \geq 1}
 \left( \mathcal{L}_k^{[r]} \right)^{\otimes \lambda_{k,r}} \right] \right) \underset{N \rightarrow 
\infty}{\longrightarrow} 0,
\end{align*}
\noindent
with the notation of the proof of Propositions \ref{convmutilde} and \ref{q28}. By dominated convergence, we are done if we
check:
\begin{align*}
& \sum_{(\lambda_{k,r})_{k, r \geq 1} \in \tilde{L}(q)} v_{\sum_{k, r \geq 1} k \lambda_{k,r}, \theta}
\, \times \\ &  \qquad...
 \prod_{k, r \geq 1} \left[\frac{1}{\lambda_{k,r}!} \left( \frac{\theta}{r!k} \right)^{\lambda_{k,r}} \right]  \;
 \sum_{\sigma \in \Sigma_q}  \int_{\mathbb{C}^q} f \, d \left( \sigma. \left[\bigotimes_{k \geq 1,r \geq 1}
 \left( \mathcal{L}_k^{[r]} \right)^{\otimes \lambda_{k,r}} \right] \right) < \infty.
\end{align*}
\noindent
Now, for $(\lambda_{k,r})_{k, r \geq 1} \in \tilde{L}(q)$, 
  by the estimates obtained in the proof of Proposition \ref{convmutilde}:
\begin{align*}
v_{\sum_{k, r \geq 1} k \lambda_{k,r}, \theta} & \leq C(\theta)
 \left(1 + \sum_{k, r \geq 1} k \lambda_{k,r}\right)^{(1- 
\theta)_+} \\ &  \leq 2 q C(\theta) \left[ 1 \, \vee \, \sup \{k \geq 1, \exists r \geq 1, \lambda_{k,r} > 0 \}
\right]^{(1-\theta)_+} \\ & \leq 2 q C(\theta)  \prod_{k, r \geq 1} k^{\lambda_{k,r}(1-\theta)_+}, 
\end{align*}
\noindent
where $C(\theta)> 0 $ depends only on $\theta$. 
By assumption, if $Z$ is a random variable which follows the distribution $\mathcal{L}$, then $\log (Z) \in \mathcal{U}^{q+1}$, which implies that
$$\mathcal{L}_k \{z \in \mathbb{C}, 
|z| \notin (R_1,R_2)\}$$
decreases at least as fast as $1/k^{q+1}$ when $k$ goes to infinity, with a constant depending only on $\mathcal{L}$,
$R_1$ and $R_2$. Since $f$ vanishes when a coordinate has modulus in $(R_1,R_2)$, one deduces:
$$ \int_{\mathbb{C}^q} f \, d \left[\bigotimes_{k \geq 1,r \geq 1}
 \left( \mathcal{L}_k^{[r]} \right)^{\otimes \lambda_{k,r}} \right]
\leq C \, \prod_{k \geq 1, r \geq 1} \left( \frac{k(k-1)...(k-r+1)}{k^{q+1}} \right)^{\lambda_{k,r}}$$
where $C$ can depend on $\mathcal{L}$, $R_1$, $R_2$, $f$, $q$. Hence, one only needs to check that:
$$\sum_{(\lambda_{k,r})_{k, r \geq 1} \in \tilde{L}(q)} 
 \prod_{k, r \geq 1} \left[\frac{1}{\lambda_{k,r}!} \left( \frac{\theta C_k^r}{k^{\beta}} \right)^{\lambda_{k,r}}
 \right]   < \infty,$$
where $\beta > q+1$ and where by convention, the binomial coefficient $C_k^r$ is equal to zero for $k < r$.
Now, for $(\lambda_{k,r})_{k, r \geq 1} \in \tilde{L}(q)$:
  $$C_k^r \leq k^r$$ and 
$$\theta^{\sum_{k,r \geq 1} \lambda_{k,r}} \leq (1+\theta)^q.$$  
Hence it is sufficient to have
$$\sum_{(\lambda_{k,r})_{k, r \geq 1} \in \tilde{L}(q)} 
  \prod_{k, r \geq 1} k^{ \lambda_{k,r} (r- \beta)} < \infty.$$
Since
$$\sum_{k, r \geq 1} r \lambda_{k,r} = q,$$
one  necessarily has $\lambda_{k,r} = 0$ for $r > q$. Hence, in any case, $$k^{\lambda_{k,r} (r-\beta)}
\leq k^{\lambda_{k,r} \gamma}$$
for $\gamma = q - \beta < -1$, and Proposition
\ref{convmutildeq} is proved if
$$\sum_{(\lambda_{k,r})_{k, r \geq 1} \in \tilde{L}(q)} 
  \prod_{k, r \geq 1} k^{  \gamma \lambda_{k,r}} < \infty.$$
This last estimate is easy to check, by replacing $\tilde{L} (q)$ by the (larger) set of 
families $(\lambda_{k,r})_{k, r \geq 1}$ of nonnegative integers such that $\lambda_{k,r} = 0$ for
$r > q$ and $\lambda_{k,r} \leq q$ in any case. 
\end{proof}
\section{The point process of eigenangles and its scaling limit in the unitary case}

In this section we do a more precise study of the point process of eigenvalues, in the unitary case. In particular,
we obtain a scaling limit for the eigenangles if they are properly renormalized. This limit 
is dominated by the large cycles of permutations, hence, we can expect almost sure convergence results 
if we consider virtual permutations. More precisely, let $(M_N)_{N \geq 1}$ be a 
sequence of random matrices following the distribution $\mathbb{P} (\infty, \theta, \mathcal{L})$, 
where $\theta > 0$ and $\mathcal{L}$ is a probability distribution on the unit circle. 
Recall that the measure $\mu(M_N)$, representing the point process of eigenvalues can be written as:
$$\mu(M_N) = \sum_{m=1}^{\infty} \, \mathds{1}_{l_{N,m} > 0} \sum_{\omega^{l_{N,m}} = Z_{N,m}}
 \delta_{\omega}. $$
Here $l_{N,m}$ is the cardinality of the intersection $C_{N,m}$ of $C_m$ 
and $\{1,...,N\}$, and $$Z_{N,m} = \prod_{j \in C_{N,m}} z_j,$$
where $(C_m)_{m \geq 1}$ is the partition of $\mathbb{N}^*$ (ordered by increasing 
smallest element) given by the cycle structure of a virtual 
permutation following the Ewens($\theta$) measure, and $(z_j)_{j \geq 1}$ is an independent sequence of 
i.i.d. variables with law $\mathcal{L}$. Since all the eigenvalues are on the unit circle, 
it can be more practical to consider the eigenangles. The corresponding point process can be described as follows:
one takes the point $x$ if and only if $e^{ix}$ is an eigenvalue of $M_N$, with a multiplicity equal to the multiplicity 
of the eigenvalue. Note that this process is $2 \pi$-periodic, and the corresponding random measure $\tau(M_N)$ 
can be written as:
$$\tau(M_N) = \sum_{m=1}^{\infty} \, \mathds{1}_{l_{N,m} > 0} \sum_{x \equiv \Theta_{N,m} ( \operatorname{mod.}
\, 2 \pi/ l_{N,m} )} \delta_x,$$
where
$$\Theta_{N,m} = \frac{1}{ i l_{N,m}} \, \sum_{j \in C_{N,m}} \log(z_j)$$
is real and well-defined modulo $2 \pi/l_{N,m}$, for $l_{N,m} > 0$. 
Note that the construction of the measure $\tau(M_N)$ implies immediately that
 $\tau(M_N)( [0, 2 \pi)) = N$, in other words, the average
spacing of two consecutive points of the corresponding point process is equal to $2 \pi/N$. If 
we want to expect a convergence for $N$ going to infinity, we need to rescale $\tau(M_N)$ in order 
to have a constant average spacing, say, one. That is why we introduce the rescaled measure $\tau_N(M_N)$,
defined as the image of $\tau(M_N)$ by the multiplication by $N/2\pi$: the corresponding point process
contains the points $x$ such that $e^{2 i \pi x/N}$ is an eigenvalue of $M_N$. One checks that
$$\tau_N(M_N) = \sum_{m=1}^{\infty} \, \mathds{1}_{y_{N,m} > 0}  \sum_{k \in \mathbb{Z}}
 \delta_{(\gamma_{N,m} + k)/y_{N,m}},$$
where $y_{N,m} := l_{N,m}/N$ is the $m$-th renormalized cycle length of the permutation $\sigma_N$ associated
with $M_N$, and:
$$\gamma_{N,m} := \frac{1} {2 i \pi} \, \sum_{j \in C_{N,m}} \log(z_j)$$
is well-defined modulo $1$.
Now the general results on virtual permutations (see \cite{tsi1} or Section \ref{sec::2}) imply that for all $m \geq 1$, $y_{N,m}$
converges a.s. to a random variable $y_m$, where $(y_m)_{m \geq 1}$ is a GEM process of parameter 
$\theta$. Then, if $\gamma_{N,m}$ is supposed to be equal to zero, one can expect that the measure 
$\tau_N(M_N)$ converges to
$$\tau_{\infty} ((M_N)_{N \geq 1}) := \sum_{m=1}^{\infty} \, \sum_{k \in \mathbb{Z}} \delta_{k/y_m}.$$
Of course, one needs to be careful since $\tau_{\infty} ((M_N)_{N \geq 1})$ has an infinite Dirac mass
at zero. Moreover, the condition $\gamma_{N,m} = 0$ is satisfied if and only if $\mathcal{L}$ is $\delta_1$, the Dirac measure 
at  one. That is why we state the following result:
\begin{proposition} \label{conv1}
Let $(M_N)_{N \geq 1}$ be a sequence of random matrices, which follows the distribution $\mathbb{P} (\infty, 
\theta, \delta_1)$, for $\theta > 0$. Then, with the notation above, the random measure
 $\tau_{N} (M_N)$ converges a.s. to the random measure $\tau_{\infty} ((M_N)_{N \geq 1})$, in the following sense:
for all continuous functions $f$ from $\mathbb{R}$ to $\mathbb{R}_+$, with compact support,
$$\int_{\mathbb{R}} f \, d \tau_{N} (M_N)  \underset{N \rightarrow \infty}{\longrightarrow} \int_{\mathbb{R}}
 f \, d \tau_{\infty} ((M_N)_{N \geq 1})$$
almost surely.
\end{proposition}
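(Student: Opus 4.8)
The plan is to exploit the completely explicit description of $\tau_N(M_N)$ obtained just above the statement. Since $\mathcal{L}=\delta_1$ we have $z_j=1$ almost surely, hence $\gamma_{N,m}=0$ for all $N,m$ and
$$\tau_N(M_N)=\sum_{m\geq1}\mathds{1}_{y_{N,m}>0}\sum_{k\in\mathbb{Z}}\delta_{k/y_{N,m}},\qquad y_{N,m}:=l_{N,m}/N.$$
Two facts will do all the work. First, by the results on virtual permutations recalled in Section \ref{sec::2} (Proposition \ref{lengths}), for every fixed $m$ one has $y_{N,m}\to y_m$ almost surely, where $(y_m)_{m\geq1}$ is a GEM$(\theta)$ process; in particular $y_m>0$ for every $m$ and $\sum_{m\geq1}y_m=1$, almost surely. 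Second, $\sum_{m\geq1}y_{N,m}=1$ for every $N$, since the cycle lengths of $\sigma_N$ sum to $N$; and $y_{N,m}>0$ as soon as $N\geq\min C_m$, so the indicator equals $1$ for all large $N$. I would then split into two cases according to the value of $f(0)$.

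If $f(0)>0$, I would observe that both members of the asserted limit are $+\infty$, so the statement reduces to $\int_{\mathbb{R}}f\,d\tau_N(M_N)\to+\infty$ almost surely. Keeping only the $k=0$ terms gives $\int_{\mathbb{R}}f\,d\tau_N(M_N)\geq f(0)\,c(\sigma_N)$, where $c(\sigma_N)$ is the number of cycles of $\sigma_N$; in the construction of Section \ref{sec::2} one has $c(\sigma_N)=1+\sum_{j=2}^N\mathds{1}_{M_j=j}$ with the $M_j$ independent and $\mathbb{P}[M_j=j]=\theta/(\theta+j-1)$, so $\sum_j\mathbb{P}[M_j=j]=\infty$ and the second Borel--Cantelli lemma yields $c(\sigma_N)\to\infty$ almost surely. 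On the other hand $\int_{\mathbb{R}}f\,d\tau_\infty\geq f(0)\cdot\#\{m:y_m>0\}=+\infty$, which settles this case.

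If $f(0)=0$, I would fix $R$ with $f$ supported in $[-R,R]$ and argue in two steps. For each fixed $m$: once $N$ is large enough that $0<y_m/2<y_{N,m}$, both sums $\sum_k f(k/y_{N,m})$ and $\sum_k f(k/y_m)$ are supported on the finite set $\{|k|\leq 2Ry_m\}$, and each term converges by continuity of $f$, so $\mathds{1}_{y_{N,m}>0}\sum_k f(k/y_{N,m})\to\sum_k f(k/y_m)$ almost surely. For the tail, since $f(0)=0$ only the $k\neq0$ terms remain and
$$\sum_{m>M}\mathds{1}_{y_{N,m}>0}\sum_{k\neq0}\left|f(k/y_{N,m})\right|\;\leq\;2R\,||f||_{\infty}\sum_{m>M}y_{N,m}\;=\;2R\,||f||_{\infty}\left(1-\sum_{m\leq M}y_{N,m}\right),$$
whose right-hand side converges, as $N\to\infty$, to $2R\,||f||_{\infty}\big(1-\sum_{m\leq M}y_m\big)$, which in turn tends to $0$ as $M\to\infty$; the same bound controls $\sum_{m>M}\sum_{k\neq0}|f(k/y_m)|$. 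Writing $\int f\,d\tau_N(M_N)-\int f\,d\tau_\infty$ as the sum over $m\leq M$ of the differences handled above plus the two tails, and letting $N\to\infty$ and then $M\to\infty$, gives the claim.

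The only real difficulty — and the place where the hypotheses (virtual permutations, $0<\theta<\infty$, $\mathcal{L}=\delta_1$) are genuinely used — is the uniform-in-$N$ smallness of the small-cycle contribution in the case $f(0)=0$. It rests on the exact mass identity $\sum_m y_{N,m}=1$ together with its almost sure limit $\sum_m y_m=1$ (no mass escapes, a property of GEM$(\theta)$ for finite $\theta$), and on isolating the $k=0$ terms, whose total number equals the number of cycles and therefore diverges; this last point is precisely why the case $f(0)>0$ must be treated apart and why the limiting measure legitimately carries an infinite atom at the origin.
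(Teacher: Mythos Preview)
Your proof is correct. The case $f(0)>0$ is handled exactly as in the paper. In the case $f(0)=0$ you take a genuinely different route for the tail control: the paper introduces $s_m:=\sup_{N\geq1}y_{N,m}$, invokes the a.s.\ bound $\sum_{m\geq1}s_m<\infty$ proved earlier via a submartingale/Doob argument (equation \eqref{2}), and then observes that the whole sum is carried by the \emph{fixed} finite set $S=\{(m,k):0<|k|\leq A s_m\}$, so termwise convergence is enough. You instead use only the exact normalisation $\sum_m y_{N,m}=1$ together with $\sum_m y_m=1$, bounding the tail by $2R\|f\|_\infty\bigl(1-\sum_{m\leq M}y_{N,m}\bigr)$ and letting $N\to\infty$, then $M\to\infty$. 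Your argument is more elementary (it avoids the $s_m$ machinery entirely), while the paper's has the virtue that once the finite index set $S$ is in hand, no double limit is needed. One cosmetic point: when you restrict to the finite set $\{|k|\leq 2Ry_m\}$ for fixed $m$, you should also use $y_{N,m}<2y_m$ for $N$ large (not just $y_{N,m}>y_m/2$), but this is immediate from $y_{N,m}\to y_m$.
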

\begin{remark}
In Proposition \ref{conv1}, the matrix $M_N$ is simply a permutation matrix, associated to a permutation 
$\sigma_N \in \Sigma_N$. Moreover, the positivity of $f$ is needed in order to deal with the infinite Dirac mass at 
zero of $\tau_{\infty} ((M_N)_{N \geq 1})$.
\end{remark}
\begin{proof}
If $f(0) > 0$, the integral of $f$ with respect to $\tau_{N} (M_N)$ is greater than or equal to 
$f(0)$ times the number of $m$ such that $y_{N,m} > 0$, i.e. the number of cycles of $\sigma_N$. 
Since this number tends a.s. to infinity with $N$, we are done. Then, we can suppose $f(0)=0$, which implies:
$$\int_{\mathbb{R}} f \, d \tau_{N} (M_N) =  \sum_{m=1}^{\infty} 
 \sum_{k \in \mathbb{Z} \backslash \{0\}}  f(k/y_{N,m}) $$
and 
$$\int_{\mathbb{R}} f \, d \tau_{\infty} ((M_N)_{N \geq 1}) =  \sum_{m=1}^{\infty} 
 \sum_{k \in \mathbb{Z} \backslash \{0\}}   f(k/y_{m}) $$
where we take the convention $f(k/y_{N,m})= 0$ for $y_{N,m} =0$. Now, if the support 
of $f$ is included in $[-A,A]$ and if $s_m$ is the supremum
of $y_{N,m}$ for all $N \geq 1$, it is clear that $f(k/y_{N,m}) = f(k/y_m) = 0$ for all $k, m$ such that
$|k| > A s_m$. One deduces that
$$\int_{\mathbb{R}} f \, d \tau_{N} (M_N) =  \sum_{(m,k) \in S}  f(k/y_{N,m}) $$
and 
$$\int_{\mathbb{R}} f \, d \tau_{\infty} ((M_N)_{N \geq 1}) =  \sum_{(m,k) \in S}  f(k/y_{m}) $$
where $S$ is the set (independent of $N$) of couples of integers $(m,k)$ such that $m \geq 1$ and $0 
< |k| \leq A s_m$. Now 
$$\sum_{m \geq 1} s_m < \infty$$
almost surely (see \eqref{2}), and then $S$ is a.s. a finite set. Since $f(k/y_{N,m})$ tends a.s. to 
$f(k/y_m)$ for all $k,m$ (recall that $f$ is continuous), we are done. 
\end{proof}
\noindent
The a.s. weak convergence given in Proposition \ref{conv1} cannot be directly generalized if the
 law $\mathcal{L}$ is not the Dirac measure at one, because $\gamma_{N,m}$ changes in a non-negligible way
for each $N \in C_m$. However, one can expect a weaker convergence. 
A natural candidate for the corresponding limit distribution would be the law of a random measure 
 defined by:
\begin{equation} 
 \sum_{m=1}^{\infty}  \sum_{k \in \mathbb{Z}} \delta_{(k + \chi_m)/y_m}, \label{chi}
\end{equation}
where we recall that $(y_m)_{m \geq 1}$ is a GEM($\theta$) process, and where $(\chi_m)_{m \geq 1}$
 is an independent i.i.d. sequence in $\mathbb{R} / \mathbb{Z}$. 
 The distribution
of $\chi_1$
needs to be, in a sense which has to be made precise, close to the distribution of:
$$\frac{1}{2 i \pi} \sum_{j=1}^K \log(z_j),$$
where $K$ is a large integer (recall that $(z_j)_{j \geq 1}$ is an i.i.d. sequence of random variables
with law $\mathcal{L}$). From the following result, we deduce a good candidate: 
\begin{lemma} \label{L*}
Let $\mathcal{L}$ be a probability measure on $\mathbb{U}$, and for $k \geq 1$, let $\mathcal{L}^{*k}$ be the multiplicative  
convolution of $k$ copies of $\mathcal{L}$. Moreover, let $r(\mathcal{L})$ be the infimum of the
integers $r$ such that $\mathcal{L}$ is carried by the set $\mathbb{U}_r$ of $r$-th roots of unity. Then for all 
sequences $(d_k)_{k \geq 1}$ of strictly positive integers, tending to infinity with $k$, the 
probability measure:
$$\frac{1} {d_k} \, \sum_{p=k}^{k+d_k-1} \mathcal{L}^{*p}$$
converges weakly to $\mathcal{L}^*$, where $\mathcal{L}^*$ is the uniform distribution on $\mathbb{U}_r$ if
 $r(\mathcal{L}) < \infty$, and the uniform distribution on $\mathbb{U}$ if $r(\mathcal{L}) = \infty$. 
\end{lemma}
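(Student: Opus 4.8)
The plan is to reduce everything to a computation with Fourier coefficients on the compact abelian group $\mathbb{U}$. For a probability measure $\nu$ on $\mathbb{U}$ and $n \in \mathbb{Z}$, set $\hat{\nu}(n) := \int_{\mathbb{U}} z^n \, d\nu(z)$. First I would record the elementary reduction: since the linear span of the characters $z \mapsto z^n$ is dense in $C(\mathbb{U})$ by Stone--Weierstrass, and since any family of probability measures on the compact space $\mathbb{U}$ is automatically tight, a sequence of probability measures $\nu_k$ on $\mathbb{U}$ converges weakly to a probability measure $\nu$ as soon as $\hat{\nu_k}(n)$ converges for every $n$, and then $\hat{\nu}(n) = \lim_k \hat{\nu_k}(n)$ (this identity uniquely pins down $\nu$). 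Thus it suffices to control the Fourier coefficients of $\nu_k := \frac{1}{d_k} \sum_{p=k}^{k+d_k-1} \mathcal{L}^{*p}$.

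Since $\widehat{\mathcal{L}^{*p}}(n) = \hat{\mathcal{L}}(n)^p$, one has $\hat{\nu_k}(n) = \frac{1}{d_k} \sum_{p=k}^{k+d_k-1} w^p$ with $w := \hat{\mathcal{L}}(n)$, $|w| \le 1$. If $w = 1$ this equals $1$ for every $k$; if $w \ne 1$, the geometric sum identity gives $|\hat{\nu_k}(n)| = \frac{1}{d_k} \bigl| \frac{w^k - w^{k+d_k}}{1-w} \bigr| \le \frac{2}{d_k |1-w|}$, which tends to $0$ because $d_k \to \infty$. Hence $\hat{\nu_k}(n) \to c_n := \mathds{1}_{\hat{\mathcal{L}}(n) = 1}$ for every $n$, so by the reduction step $\nu_k$ converges weakly to the unique probability measure $\nu$ on $\mathbb{U}$ whose Fourier coefficients are $(c_n)_{n \in \mathbb{Z}}$. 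Note that the dynamics of the convolution powers have collapsed entirely here — this Ces\`aro-averaging trick is the crux of the statement.

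The last step is to identify $\nu$ with $\mathcal{L}^*$. I would observe that $\hat{\mathcal{L}}(n) = 1$ means $\int_{\mathbb{U}} (1 - z^n) \, d\mathcal{L} = 0$, and since $\operatorname{Re}(1 - z^n) \ge 0$ this forces $z^n = 1$ for $\mathcal{L}$-almost every $z$. Consequently $H := \{ n \in \mathbb{Z} : \hat{\mathcal{L}}(n) = 1 \}$ is precisely $\{ n : z^n = 1 \ \mathcal{L}\text{-a.s.} \}$, which is a subgroup of $\mathbb{Z}$; write $H = r_0 \mathbb{Z}$. If $r_0 \ge 1$, then $\mathcal{L}$ is carried by $\mathbb{U}_{r_0}$ and $r_0$ is the least positive integer with this property, so $r_0 = r(\mathcal{L})$; and since the uniform measure on $\mathbb{U}_{r_0}$ has Fourier coefficients $\frac{1}{r_0} \sum_{j=0}^{r_0 - 1} e^{2\pi i j n / r_0} = \mathds{1}_{r_0 \mid n} = c_n$, it coincides with $\nu$. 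If $r_0 = 0$, then no finite $r$ satisfies $z^r = 1$ $\mathcal{L}$-a.s., i.e. $r(\mathcal{L}) = \infty$, while $c_n = \mathds{1}_{n = 0}$ are the Fourier coefficients of the uniform measure on $\mathbb{U}$, which is therefore $\nu$. In both cases $\nu = \mathcal{L}^*$, as claimed.

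The only point that is not completely routine is the reduction lemma in the first paragraph (pointwise convergence of all Fourier coefficients implies weak convergence to a probability measure); on the compact group $\mathbb{U}$ this is a soft consequence of automatic tightness together with density of trigonometric polynomials, so I do not expect any real obstacle. The remaining ingredients — the geometric-sum bound and the classification of subgroups of $\mathbb{Z}$ — are elementary.
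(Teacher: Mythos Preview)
Your proof is correct and follows essentially the same route as the paper: compute the Fourier coefficients of the averaged measures, observe that $\frac{1}{d_k}\sum_{p=k}^{k+d_k-1}\hat{\mathcal L}(n)^p$ equals $1$ when $\hat{\mathcal L}(n)=1$ and tends to $0$ otherwise, and identify the limiting set of coefficients with those of $\mathcal L^*$. You are simply more explicit than the paper about the reduction step (Stone--Weierstrass plus tightness) and about why $\{n:\hat{\mathcal L}(n)=1\}$ is a subgroup of $\mathbb Z$ with generator $r(\mathcal L)$, both of which the paper leaves implicit.
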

\begin{proof}
Let us define, for all $q \in \mathbb{Z}$, and for all probability measures $\mathcal{M}$ on $\mathbb{U}$:
$$\widehat{\mathcal{M}} (q) := \int_{\mathbb{U}} z^q \, d \mathcal{M}.$$
Moreover, let us set, for $k \geq 1$:
$$\mathcal{M}_k := \frac{1} {d_k} \, \sum_{p=k}^{k+d_k-1} \mathcal{L}^{*p}$$
Then, one has:
$$\widehat{\mathcal{M}_k} (q) = \frac{1}{d_k} \, \sum_{p=k}^{k + d_k - 1} \, [\widehat{\mathcal{L}}(q)]^p,$$
which is equal to one if $\widehat{\mathcal{L}}(q) = 1$ and which tends to zero when $k \rightarrow \infty$, 
if $\widehat{\mathcal{L}}(q) \neq 1$. Now, $\widehat{\mathcal{L}}(q) = 1$ if and only if $q$ is divisible by $r
(\mathcal{L}) $, for $r(\mathcal{L})  < \infty$, 
and if and only if $q = 0$, for $r(\mathcal{L}) = \infty$. 
Hence:
$$ \widehat{\mathcal{L}^*} (q) = \mathds{1}_{\widehat{\mathcal{L}}(q) = 0},$$
and finally,
$$\widehat{\mathcal{M}_k} (q) \underset{k \rightarrow \infty}{\longrightarrow} \hat{\mathcal{L}^*} (q).$$
\end{proof}
\noindent
Because of Lemma \ref{L*}, the law $D(\mathcal{L})$ of $\chi_1$, in equation \eqref{chi}, is chosen as follows:
if $r(\mathcal{L}) < \infty$, it is the uniform distribution on the classes $\{0,1/r,...,(r-1)/r\}$ modulo 1, 
and if $r(\mathcal{L}) = \infty$, it is the uniform measure on $\mathbb{R} / \mathbb{Z}$. 
\begin{remark}
If $r(\mathcal{L})$ is finite, it is the smallest integer $r \geq 1$ such that $M_N \in \mathcal{H}_r(N)$ a.s., 
for $M_N$ following the distribution $\mathcal{P} (N, \theta, \mathcal{L})$. 
\end{remark}
\noindent
We are now able to state our convergence result for any distribution $\mathcal{L}$ on $\mathbb{U}$:
\begin{proposition} \label{conv2}
Let $(M_N)_{N \geq 1}$ be a sequence of random matrices, such that for all $N \geq 1$, 
$M_N$ follows the distribution $\mathbb{P} (N, 
\theta, \mathcal{L})$, where $\theta > 0$ and where $\mathcal{L}$ is a probability measure on $\mathbb{U}$. 
We suppose that $\mathcal{L}$ satisfies one of the two following conditions:
\begin{itemize}
\item the measure is carried by $\mathbb{U}_r$ for some integer $r \geq 1$. 
\item there exists $v > 1$ such that for $\epsilon > 0$ small enough, and for all
arcs $\mathcal{A}$ in $\mathbb{U}$ of size $\epsilon$, $\mathcal{L} (\mathcal{A}) \leq |\log(\epsilon)|^{-v}$.
\end{itemize}
\noindent
Let $\tau_{\infty} (\theta, \mathcal{L})$ be the random measure defined by:
$$ \tau_{\infty} (\theta, \mathcal{L}) := 
  \sum_{m=1}^{\infty}  \sum_{k \in \mathbb{Z}} \delta_{(k + \chi_m)/x_m},$$
where $(x_m)_{m \geq 1}$ is a Poisson-Dirichlet process of parameter $\theta$, and 
$(\chi_m)_{m \geq 1}$ is an independent sequence of i.i.d. random variables on $\mathbb{R} / \mathbb{Z}$, 
following the distribution $D(\mathcal{L})$ defined above. 
Then, 
with the previous notation, the distribution of the random measure
 $\tau_{N} (M_N)$ converges to the distribution of  the random measure $\tau_{\infty} (\theta, \mathcal{L})$, 
in the following sense:
for all continuous functions $f$ from $\mathbb{R}$ to $\mathbb{R}_+$, with compact support,
\begin{itemize}
\item if $f(0) > 0$ and $r(\mathcal{L}) < \infty$, then 
$$\int_{\mathbb{R}} f \, d \tau_{\infty} (\theta, \mathcal{L}) = \infty$$
a.s., and for all $A > 0$:
$$\mathbb{P} \left[ \int_{\mathbb{R}} f \, d \tau_{N} (M_N) \leq A \right] \longrightarrow 0$$ 
when $N \rightarrow \infty$;
\item if $f(0)= 0$ or $r(\mathcal{L}) = \infty$, then:
$$\int_{\mathbb{R}} f \, d \tau_{\infty} (\theta, \mathcal{L}) < \infty$$
a.s., and 
$$\int_{\mathbb{R}} f \, d \tau_{N} (M_N) \underset{N \rightarrow \infty}{\longrightarrow} 
 \int_{\mathbb{R}} f \, d \tau_{\infty} (\theta, \mathcal{L}) $$
in distribution.
\end{itemize}
\end{proposition}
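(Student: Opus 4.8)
The plan is to realise the whole sequence $(M_N)_{N\geq 1}$ under the coupling $\mathbb{P}(\infty,\theta,\mathcal{L})$ (legitimate, since the statement concerns convergence in distribution and the $N$-th projection of $\mathbb{P}(\infty,\theta,\mathcal{L})$ is $\mathbb{P}(N,\theta,\mathcal{L})$), and to fix a continuous $f\geq 0$ with support in some $[-A,A]$. With the notation of Section \ref{sec::2} write $y_{N,m}=l_{N,m}/N$ and $\gamma_{N,m}=\frac{1}{2i\pi}\sum_{j\in C_{N,m}}\log z_j\in\mathbb{R}/\mathbb{Z}$; recall from the proof of Proposition \ref{empirical} that almost surely $y_{N,m}\to y_m$ for each $m$ with $(y_m)_{m\geq1}$ a GEM$(\theta)$ process, that $y_{N,m}\leq s_m$ with $\sum_m s_m<\infty$ a.s.\ (see \eqref{2}), and that $\mathbb{E}[s_m^2]$ decays exponentially in $m$. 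Since $\int_{\mathbb{R}}f\,d\tau_N(M_N)=\sum_{m}\sum_{k:\,|k+\gamma_{N,m}|\leq Ay_{N,m}}f\big((\gamma_{N,m}+k)/y_{N,m}\big)$, each cycle contributes only boundedly many terms, so the whole problem reduces to the limiting behaviour of the phases $\gamma_{N,m}$.

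The basic input is a joint version of Lemma \ref{L*}. For $m$ fixed, $l_{N,m}\to\infty$ a.s.\ (each $C_m$ is a.s.\ infinite), and conditionally on the partition $\Pi$ the variables $\gamma_{N,m}$ are independent, with conditional characteristic function $q\mapsto\widehat{\mathcal{L}}(q)^{\,l_{N,m}}$ on $\mathbb{R}/\mathbb{Z}$. If $\mathcal{L}$ satisfies the arc bound then $|\widehat{\mathcal{L}}(q)|<1$ for every $q\neq0$ (otherwise $\mathcal{L}$ would have an atom of mass $\geq 1/|q|$, contradicting the bound), so this tends to $\mathds{1}_{q=0}$; if $\mathcal{L}$ is carried by $\mathbb{U}_r$ the same computation on $\mathbb{Z}/r\mathbb{Z}$ applies, using in addition the equidistribution of $l_{N,m}$ modulo $r$. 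In either case, for every fixed $M$, conditioning on $\Pi$ (which carries $(y_{N,m})_{m\leq M}$) and using bounded convergence shows that $\big((y_{N,m},\gamma_{N,m})\big)_{1\leq m\leq M}$ converges in distribution to $\big((y_m,\chi_m)\big)_{1\leq m\leq M}$, with $(\chi_m)$ i.i.d.\ of law $D(\mathcal{L})$ and independent of $(y_m)$. The map $\big((y_m),(\gamma_m)\big)\mapsto\sum_{m\leq M}\sum_k f\big((\gamma_m+k)/y_m\big)$ is continuous on $\{y_m>0\ \forall m\}$, a full-measure set for the limit, so by the continuous mapping theorem the truncated sum converges in distribution to $\sum_{m\leq M}\sum_k f\big((\chi_m+k)/y_m\big)$; letting $M\to\infty$ the latter converges a.s.\ to $\int_{\mathbb{R}}f\,d\big(\sum_m\sum_k\delta_{(\chi_m+k)/y_m}\big)$, which is distributed as $\int_{\mathbb{R}}f\,d\tau_\infty(\theta,\mathcal{L})$ since reordering $(y_m)$ into decreasing order (PD$(\theta)$) leaves the measure unchanged when the $\chi_m$ are i.i.d.

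It remains to show the tail $\sum_{m>M}\sum_k f\big((\gamma_{N,m}+k)/y_{N,m}\big)$ is small uniformly in $N$ for $M$ large; with the a.s.\ summability of the corresponding tail of $\tau_\infty(\theta,\mathcal{L})$ this yields the second bullet by a $3\varepsilon$ argument. If $\mathcal{L}$ is carried by $\mathbb{U}_r$ and $f(0)=0$ this is immediate: $\gamma_{N,m}\in\frac{1}{r}\mathbb{Z}/\mathbb{Z}$ forces every nonzero contributing term to have $|(\gamma_{N,m}+k)/y_{N,m}|\geq 1/(ry_{N,m})$, so only the $\leq rA$ cycles with $y_{N,m}\geq 1/(rA)$ contribute and there is no tail. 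If $\mathcal{L}$ satisfies the arc bound, then conditioning on all but one $z_j$ gives $\mathbb{P}[\gamma_{N,m}\in I\mid l_{N,m}]\leq\sup_{|J|=|I|}\mathcal{L}(J)\leq|\log(c|I|)|^{-v}$ for small arcs $I$; a contributing $k$ needs $\gamma_{N,m}$ in an arc of length $\leq 2Ay_{N,m}\leq 2As_m$ near $0$, whence $\mathbb{E}\big[\sum_k f((\gamma_{N,m}+k)/y_{N,m})\big]\leq C\,\mathbb{E}\big[\min(1,|\log(2cAs_m)|^{-v})\big]$. Splitting on $\{s_m\leq e^{-am}\}$ and its complement, using $\mathbb{P}[s_m>e^{-am}]\leq e^{2am}\mathbb{E}[s_m^2]$ together with the exponential decay of $\mathbb{E}[s_m^2]$ and choosing $a$ small, the $m$-th term is $O(m^{-v})$ plus an exponentially small quantity; since $v>1$, $\sum_{m>M}$ of this tends to $0$, and Markov's inequality gives the required uniformity in $N$. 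The same estimate with $y_m$ in place of $s_m$ gives $\int_{\mathbb{R}}f\,d\tau_\infty(\theta,\mathcal{L})<\infty$ a.s.\ (equivalently, $\mathbb{E}\big[\int f\,d\tau_\infty(\theta,\mathcal{L})\big]=\int_{\mathbb{R}}f\cdot\mathbb{E}\big[\sum_m x_m\big]=\int_{\mathbb{R}}f$).

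For the first bullet ($f(0)>0$, $r(\mathcal{L})<\infty$), $\int f\,d\tau_\infty(\theta,\mathcal{L})\geq f(0)\,\#\{m:\chi_m=0\}=\infty$ a.s.\ since the $\chi_m$ are i.i.d.\ with $\mathbb{P}[\chi_m=0]=1/r$; and $\int f\,d\tau_N(M_N)\geq f(0)\,\#\{m:\gamma_{N,m}=0\}$, where $\gamma_{N,m}=0$ corresponds to vanishing of the $\mathbb{Z}/r\mathbb{Z}$-random walk driven by the $z_j$, whose step law generates $\mathbb{Z}/r\mathbb{Z}$ by minimality of $r$, so along a suitable residue class of lengths the vanishing probability is bounded below; as the number of cycles of $\sigma_N$ with length in that class and $\geq L$ grows like a constant times $\log N$ (under the Feller coupling it is a sum of independent Bernoulli variables), conditioning on the cycle structure exhibits $\#\{m:\gamma_{N,m}=0\}$ as dominating a sum of independent Bernoulli$(\geq c)$ variables over a number of terms tending to infinity, hence $\to\infty$ in probability, giving $\mathbb{P}[\int f\,d\tau_N(M_N)\leq A']\to0$. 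The main obstacle I expect is the uniform-in-$N$ tail estimate above in the case $r(\mathcal{L})=\infty$: one must play the logarithmic anti-concentration of $\mathcal{L}$ on small arcs against the exponential decay of $\mathbb{E}[s_m^2]$, and it is precisely here that $v>1$ is used. A secondary delicate point is the equidistribution of $\gamma_{N,m}$ in the nearly-degenerate cases (e.g.\ $\mathcal{L}=\delta_{-1}$, or $\mathcal{L}=\delta_\zeta$ with $\zeta$ a primitive $r$-th root of unity), where the randomness of the limit must be extracted from the fluctuations of the cycle length $l_{N,m}$ modulo $r$ rather than from the $z_j$'s — a local-limit-type statement for the virtual permutation.
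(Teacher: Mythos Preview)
Your overall architecture matches the paper's: couple via $\mathbb{P}(\infty,\theta,\mathcal{L})$, truncate in $m$, prove joint convergence of $((y_{N,m},\gamma_{N,m}))_{m\leq M}$ to $((y_m,\chi_m))_{m\leq M}$, and control the tail $\sum_{m>M}$ uniformly in $N$ using the arc bound against the exponential decay of $\mathbb{E}[s_m]$. Your tail argument is essentially the paper's (the paper splits on $\{y_{N,m}\geq \rho^m/A\}$ and uses the same exponent $v>1$), and your treatment of the first bullet via the Feller coupling and the divergence of $\sum_p \mathcal{L}^{*p}(\{1\})/p$ is also close.

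The genuine gap is precisely the point you flag as ``secondary'': it is in fact the main technical obstacle. Your conditioning-on-$\Pi$ argument for the joint convergence works cleanly in the continuous case (arc bound), and is actually simpler than the paper's route there, since conditionally on $\Pi$ the $\gamma_{N,m}$ have law $\mathcal{L}^{*l_{N,m}}$ and $l_{N,m}\to\infty$ forces $\widehat{\mathcal{L}}(q)^{l_{N,m}}\to 0$. But in the discrete case it can fail outright: if $\mathcal{L}=\delta_{\zeta}$ with $\zeta$ a primitive $r$-th root of unity, then conditionally on $\Pi$ the variable $\gamma_{N,m}$ is \emph{deterministic}, equal to $l_{N,m}/r$ mod $1$, and $l_{N,m}$ mod $r$ does not converge as $N\to\infty$. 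So the limiting randomness of $\chi_m$ cannot be extracted from the $z_j$'s at all; it must come from the fluctuation of $l_{N,m}$ in a window of width $\gg r$ around $Ny_m$. You assert the needed ``equidistribution of $l_{N,m}$ modulo $r$'', but this is a local limit statement for the cycle lengths under Ewens$(\theta)$, joint with $y_{N,m}\to y_m$, and it has to be proved. The paper does this via two lemmas: Lemma~\ref{Psi} shows that the conditional law of $l_{N,m}$ given $q_{N,m}=\lfloor\sqrt{l_{N,m}}\rfloor$ is, up to a multiplicative error $1\pm\Psi_{r,\theta}(N)\to 1$, uniform on $\{q_{N,m}^2,\dots,(q_{N,m}+1)^2-1\}$; then Lemma~\ref{ygamma} feeds this into the Ces\`aro average $\frac{1}{2q+1}\sum_{l=q^2}^{q^2+2q}\mathcal{L}^{*l}$, which converges to $\mathcal{L}^*$ by Lemma~\ref{L*}. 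This pair of lemmas is exactly what your sketch is missing; without them, the discrete case (including the benchmark case $\mathcal{L}=\delta_1$ viewed inside $\mathbb{U}_r$) is not covered.
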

\noindent
\begin{remark}
In Proposition \ref{conv2}, the Poisson-Dirichlet distribution can be replaced by a GEM distribution, 
since it does not change the law of the random measure $\tau_{\infty} (\theta, \mathcal{L})$.
\end{remark}
\begin{proof}
 Without changing the laws of the random measures $\tau_{N} (M_N)$ and 
 $\tau_{\infty} (\theta, \mathcal{L})$,  
  one can suppose that $(M_N)_{N \geq 1}$ follows
the distribution $\mathbb{P}(\infty, \theta, \mathcal{L})$ and that, with the notation above:
$$\tau_{\infty} (\theta, \mathcal{L}) =
  \sum_{m=1}^{\infty}  \sum_{k \in \mathbb{Z}} \delta_{(k + \chi_m)/y_m},$$
where $(\chi_m)_{m \geq 1}$ is supposed to be independent of $(y_m)_{m \geq 1}$. 
Recall that 
$$\tau_N(M_N) = \sum_{m=1}^{\infty} \, \mathds{1}_{y_{N,m} > 0}  \sum_{k \in \mathbb{Z}} 
 \delta_{(k + \gamma_{N,m})/y_{N,m}},$$
where, conditionally on $(y_{N,m})_{m \geq 1}$,
 $(\gamma_{N,m})_{m \geq 1}$ is a sequence of independent random variables on $\mathbb{R} / \mathbb{Z}$, 
and for all $m \geq 1$, $\gamma_{N,m}$ has the same law as $\frac{\log Z}{2 i \pi}$, where $Z$ follows
the distribution $\mathcal{L}^{*Ny_{N,m}} = \mathcal{L}^{*l_{N,m}}$. 
Let us now suppose $f(0)> 0$ and $r(\mathcal{L}) < \infty$. Since $\chi_m$ is a.s. equal to zero 
for infinitely many $m$ (because $D(\mathcal{L}) (\{0\}) > 0$), $\tau_{\infty} (\theta, \mathcal{L})$
has an infinite Dirac mass at zero, and then a.s.:
$$\int_{\mathbb{R}} f \, d \tau_{\infty} (\theta, \mathcal{L}) = \infty.$$
 Moreover
$$\int_{\mathbb{R}} f \, d \tau_{N} (M_N)  \geq f(0) \, \sum_{m=1}^{\infty} \mathds{1}_{y_{N,m} > 0,
\gamma_{N,m} = 0}.$$
Now, if $(Z_{p,q})_{p, q \geq 1}$ is a family of independent random variables, independent
of $\sigma_N$ (the permutation associated with $M_N$), and  such that for all $p,q \geq 1$, $Z_{p,q}$ has
distribution $\mathcal{L}^{*p}$, then
$$\sum_{m=1}^{\infty} \mathds{1}_{y_{N,m} > 0,
\gamma_{N,m} = 0} = \sum_{p=1}^{\infty} \sum_{q=1}^{a_{N,p}} \mathds{1}_{Z_{p,q} = 1}$$
in distribution, where $a_{N,p}$ denotes the number of $m$ such that $l_{N,m} = p$ (i.e. the number of 
$p$-cycles in $\sigma_N$). Therefore:
$$\int_{\mathbb{R}} f \, d \tau_{N} (M_N)  \succeq  f(0) \sum_{p=1}^{\infty}
 \sum_{q=1}^{c_{N,p}} \mathds{1}_{Z_{p,q} = 1},$$
where $\succeq$ denotes the stochastic domination, and where $(c_{N,p})_{p \geq 1}$, defined
 by the Feller coupling as in \eqref{cN}, is supposed to be independent of $(Z_{p,q})_{p, q \geq 1}$.
Since for all $p \geq 1$, $c_{N,p}$ increases a.s. to $b_p$, Proposition \ref{conv2} is proved 
for $f(0)> 0$ and $r(\mathcal{L}) < \infty$, if we show that a.s.,
$$\sum_{p=1}^{\infty} \sum_{q=1}^{b_p} \mathds{1}_{Z_{p,q} = 1} = \infty.$$
and a fortiori, if we prove that
$$\sum_{p=1}^{\infty} \mathds{1}_{ \exists q \leq b_p, Z_{p,q} = 1}  = \infty.$$
Since all the variables $(b_p)_{p \geq 1}$ and $(Z_{p,q})_{p, q \geq 1}$ are independent, by the Borel-Cantelli
lemma it is sufficient to have
$$\sum_{p=1}^{\infty} \mathbb{P} [ \exists q \leq b_p, Z_{p,q} = 1]  = \infty.$$
Now, if for $p \geq 1$, we define
 $$P(p) = \mathcal{L}^{*p} (\{1\}),$$
we have:
$$\mathbb{P} [\exists q \leq b_p, Z_{p,q} =1  \, | \, b_p] = 1 - [1 - P(p)]^{b_p},$$
and, since $b_p$ is a Poisson random variable of parameter $\theta/p$,
$$\mathbb{P} [\exists q \leq b_p, Z_{p,q} =1 ] = 1 - e^{- \theta P(p)/p},$$
and then we are done if we prove:
\begin{equation}
\sum_{p \geq 1} P(p)/p = \infty. \label{xxxxx}
\end{equation}
Now, by Lemma \ref{L*}:
$$\frac{1} {2^k} \, \sum_{p=2^k}^{2^{k+1}-1} \mathcal{L}^{*p}$$
converges weakly to the uniform distribution on $\mathbb{U}_{r(\mathcal{L})}$, for $k \rightarrow \infty$. 
 Since all the measures involved here are supported by
 the finite set $\mathbb{U}_{r(\mathcal{L})}$, one deduces that
$$\frac{1}{2^k} \, \sum_{p=2^k}^{2^{k+1}-1} P(p) \underset{k \rightarrow \infty}{\longrightarrow}
1/ r(\mathcal{L}),$$
which implies \eqref{xxxxx}. We can now suppose $f(0)= 0$ or $r(\mathcal{L}) = \infty$.
In order to prove Proposition \ref{conv2} in this case, we need the following result:
\begin{lemma} \label{Psi}
Let $\theta > 0$, $r \in \mathbb{N}^*$, and let $(l_{N,m})_{m \geq 1}$ be the sequence of cycle lengths (ordered 
by increasing smallest elements, and completed by zeros) of a random permutation in $\Sigma_N$, following the Ewens 
measure with parameter $\theta$. Then, there exists a function $\Psi_{r, \theta}$ from $\mathbb{N}^*$ 
to $\mathbb{R}^+$, tending to zero at infinity, and such that for all strictly positive integers 
 $(l_m)_{1 \leq m \leq r}$, $(l'_m)_{1 \leq m \leq r}$, $N$,  satisfying:
$$\sum_{m=1}^{r} l_m < N- N^{2/3},$$
$$\sum_{m=1}^{r} l'_m < N - N^{2/3},$$
$$\forall m \in \{1,...,r\}, \, \exists q \in \mathbb{N}^*, \, q^2 \leq l_m, l'_m < (q+1)^2,$$
one has
$$\frac{\mathbb{P} [\forall m \in \{1,...,r\}, l_{N,m} = l_m]}
{\mathbb{P} [\forall m \in \{1,...,r\}, l_{N,m} = l'_m]} \in \left( 1 - \Psi_{r, \theta} (N), 
1 + \Psi_{r, \theta} (N) \right).$$
\end{lemma}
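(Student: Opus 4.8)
The plan is to reduce the ratio to a product of $r$ explicit factors and then estimate each factor by interpolation. \emph{Exact formula.} First I would record the elementary count that, under the Ewens($\theta$) measure on $\Sigma_N$, the probability that the cycle through $1$ has a prescribed support $B$ of size $l$ equals $(l-1)!\,\theta\,(\theta)_{N-l}/(\theta)_N$, where $(\theta)_j:=\theta(\theta+1)\cdots(\theta+j-1)$; summing over the $\binom{N-1}{l-1}$ choices of $B$ gives
\[
\mathbb{P}[l_{N,1}=l]=f(l,N),\qquad f(l,M):=\frac{\theta}{\theta+M-l}\prod_{j=1}^{l-1}\frac{M-j}{\theta+M-j}.
\]
Conditionally on that support, the restriction of $\sigma$ to the complement is again Ewens($\theta$) on a set of size $N-l$, and the conditional law depends only on $l$; iterating $r$ times,
\[
\mathbb{P}\bigl[\forall m\le r,\ l_{N,m}=l_m\bigr]=\prod_{m=1}^{r}f(l_m,N_m),\qquad N_m:=N-\textstyle\sum_{i<m}l_i,
\]
and similarly with $l'_m$ and $N'_m:=N-\sum_{i<m}l'_i$. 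Thus it suffices to bound $\bigl|\log f(l_m,N_m)-\log f(l'_m,N'_m)\bigr|$ for each $m$, which I would do by passing through $\log f(l_m,N'_m)$.

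\emph{Two identities for $f$.} From $\log f(l,M)=\log\theta-\log(\theta+M-l)+\sum_{j=1}^{l-1}\log\frac{M-j}{\theta+M-j}$ one obtains, by telescoping, $\log f(l,M)-\log f(l-1,M)=\log\frac{M-l+1}{\theta+M-l}$, and, treating $M$ as a real variable and bounding $\sum_{j=1}^{l-1}(M-j)^{-2}\le\int_{M-l}^{M-1}x^{-2}\,dx\le(M-l)^{-1}$, the estimate $\bigl|\partial_M\log f(l,M)\bigr|\le(1+\theta)/(M-l)$. Hence the ``$M$-part'' $\log f(l_m,N_m)-\log f(l_m,N'_m)$ is at most $|N_m-N'_m|\,(1+\theta)/(M-l_m)$ over the intermediate range of $M$, while the ``$l$-part'' $\log f(l_m,N'_m)-\log f(l'_m,N'_m)$ is a sum of $|l_m-l'_m|$ terms of the form $\log\frac{u+1}{u+\theta}$ with $u=N'_m-k$, each of size $O\bigl(|1-\theta|/u\bigr)$.

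\emph{Using the hypotheses.} Two quantitative inputs finish the job: (i) $l_m,l'_m\in[q^2,(q+1)^2)$ forces $|l_m-l'_m|\le 2q\le 2\sqrt N$, hence $|N_m-N'_m|\le 2(r-1)\sqrt N$; and (ii) the buffers $\sum_{i\le r}l_i<N-N^{2/3}$ and $\sum_{i\le r}l'_i<N-N^{2/3}$ give $N_m-l_m>N^{2/3}$ and $N'_m-l'_m>N^{2/3}$. Since the perturbations in (i) are of order $\sqrt N\ll N^{2/3}$, a short check shows that throughout the interpolation the arguments $M-l$ and $u$ appearing above stay above $\tfrac12 N^{2/3}$ once $N$ is large. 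Substituting, both parts are $\le C(\theta)\sqrt N/N^{2/3}=C(\theta)N^{-1/6}$, so $\bigl|\log f(l_m,N_m)-\log f(l'_m,N'_m)\bigr|\le C(\theta)N^{-1/6}$; summing over the $r$ indices, $\bigl|\log(\text{ratio})\bigr|\le C(r,\theta)N^{-1/6}$. One may therefore take $\Psi_{r,\theta}(N):=e^{C(r,\theta)N^{-1/6}}-1$, which tends to $0$ and depends only on $r$ and $\theta$.

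The only genuinely delicate point is the bookkeeping in the last step: one must make sure that the $O(\sqrt N)$ shifts — coming both from replacing $N_m$ by $N'_m$ and from replacing $l_m$ by $l'_m$ — never bring $M-l$ (or an intermediate $M-k$) down to a scale at which the bounds of the preceding step deteriorate. This is precisely why the hypothesis carries an $N^{2/3}$, rather than a $\sqrt N$, buffer; granting that, the remaining estimates are routine manipulations of products of the form $\prod(M-j)/(\theta+M-j)$ and need no further ideas.
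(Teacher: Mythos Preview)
Your proof is correct and follows essentially the same approach as the paper: both derive the explicit product formula for $\mathbb{P}[\forall m\le r,\ l_{N,m}=l_m]$, use the hypotheses to obtain $|l_m-l'_m|\le 2\sqrt{N}$ and $N_m-l_m>N^{2/3}$, and conclude that $|\log(\text{ratio})|=O(N^{-1/6})$. The only organisational difference is that the paper first collapses the product to $\bigl(\prod_{p=1}^{L_r}\frac{N-p}{N-p+\theta}\bigr)\bigl(\prod_{m=1}^r\frac{\theta}{N-L_m}\bigr)$ and then bounds the ratio directly as two short products, whereas you bound each factor $f(l_m,N_m)/f(l'_m,N'_m)$ separately via an interpolation in $M$ and in $l$; the resulting estimates and the explicit $\Psi_{r,\theta}(N)$ are of the same form.
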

\begin{proof}
For $0 \leq m \leq r$, let us define:
$$L_m = \sum_{p = 1}^m l_p,$$
$$L'_m = \sum_{p = 1}^m l'_p,$$
By using the Feller coupling, one obtains the following expression:
\begin{align*}
\mathbb{P} [\forall m \in \{1,...,r\}, l_{N,m} = l_m]
& = \prod_{m=1}^r \, \left[ \frac{\theta}{N - L_m + \theta} \, 
\prod_{p = L_{m-1}+1}^{L_m -1} \, \frac{N-p}{N-p+ \theta} \right] \\
& = \left(\prod_{p=1}^{L_r} \, \frac{N-p}{N-p+\theta} \right) \left( \prod_{m=1}^r \, \frac{\theta}{N-L_m} \right).
\end{align*}
\noindent
Hence, if $L_r \geq L'_r$
$$\frac{\mathbb{P} [\forall m \in \{1,...,r\}, l_{N,m} = l_m]}
{\mathbb{P} [\forall m \in \{1,...,r\}, l_{N,m} = l'_m]} 
= \left(\prod_{p=L'_r + 1}^{L_r} \, \frac{N-p}{N-p+\theta} \right) \left( \prod_{m=1}^r \, \frac{N-L'_m}{N-L_m}
\right).$$
For $1 \leq m \leq r$, there exists an integer $q \geq 1$ such that $q^2 \leq l_m, l'_m < (q+1)^2$, and
 $q \leq \sqrt{N}$, since $l_m, l'_m < N$.
Hence, $|l_m - l'_m| \leq 2q \leq 2 \sqrt{N}$, and $|L_m - L'_m| \leq 2r \sqrt{N}$. 
Moreover, $N- L_m, N-L'_m > N^{2/3}$ by assumption. Therefore, one has the majorization:
$$\left| \log \left(  \frac{\mathbb{P} [\forall m \in \{1,...,r\}, l_{N,m} = l_m]}
{\mathbb{P} [\forall m \in \{1,...,r\}, l_{N,m} = l'_m]} 
\right) \right|  \leq  2r \sqrt{N} \, \log \left( 1 + \frac{\theta}{N^{2/3}} \right) + r \log \left(1+ \frac{
2 r \sqrt{N}}{N^{2/3}} \right).$$
This implies Lemma \ref{Psi} for:
$$\Psi_{r, \theta} (N) = \left(1 + \frac{\theta}{N^{2/3}} \right)^{2r \sqrt{N}} \left(1 + \frac{2 r \sqrt{N}}
{N^{2/3}} \right)^r + \frac{1}{N} - 1.$$
\end{proof}
A consequence of Lemma \ref{Psi} is the following:
\begin{lemma} \label{ygamma}
Let $\theta > 0$ and let $\mathcal{L}$ be a probability law on the unit circle. 
Let $(l_{N,m})_{m \geq 1}$ be the sequence of cycle lengths (ordered 
by increasing smallest elements, and completed by zeros) of a random permutation in $\Sigma_N$, following Ewens 
measure with parameter $\theta > 0$. Let $(\gamma_{N,m})_{m \geq 1}$ be a sequence of random variables 
on $\mathbb{R} / \mathbb{Z}$ such that conditionally on $(l_{N,m})_{m \geq 1}$, 
 $(\gamma_{N,m})_{m \geq 1}$ are independent
and for all $m \geq 1$, $\gamma_{N,m}$ has the same law as $\frac{\log Z}{2 i \pi}$, where $Z$ follows
the multiplicative convolution $\mathcal{L}^{*l_{N,m}}$ of $l_{N,m}$ copies of $\mathcal{L}$.
Then, for all $r \geq 1$, the family $(l_{N,m}/N, \gamma_{N,m})_{m \leq r}$ converges in distribution 
to $(y_m, \chi_{m})_{m \leq r}$, where $(y_m)_{m \geq 1}$ is a GEM process of parameter $\theta$, and
$(\chi_m)_{m \geq 1}$ is an independent sequence of i.i.d. random variables, with law $D(\mathcal{L})$, defined
above.
\end{lemma}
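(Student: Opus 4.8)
The plan is to prove the stated convergence in distribution by testing against functions of the form $(x_1,\dots,x_r,u_1,\dots,u_r)\mapsto g(x_1,\dots,x_r)\prod_{m=1}^r f_m(u_m)$, with $g$ bounded continuous on $\mathbb{R}_+^r$ and $f_1,\dots,f_r$ continuous on $\mathbb{R}/\mathbb{Z}$ (such functions are convergence-determining here, the limiting law being carried by a compact set), and to reduce, via the conditional independence of the $\gamma_{N,m}$, to an averaging statement for the convolution powers $\mathcal{L}^{*p}$. Concretely, set $\tilde f_m(z):=f_m\big(\tfrac{\log z}{2i\pi}\bmod 1\big)\in C(\mathbb{U})$, $\phi_m(p):=\int_{\mathbb{U}}\tilde f_m\,d\mathcal{L}^{*p}$, and $c_m:=\int_{\mathbb{U}}\tilde f_m\,d\mathcal{L}^*=\int_{\mathbb{R}/\mathbb{Z}}f_m\,dD(\mathcal{L})$. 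Conditioning on $(l_{N,m})_{m\ge1}$ and using that, given the lengths, $\gamma_{N,m}$ has the law of $\tfrac{\log Z}{2i\pi}$ with $Z\sim\mathcal{L}^{*l_{N,m}}$, the quantity to study becomes $E_N:=\mathbb{E}\big[g\big((l_{N,m}/N)_{m\le r}\big)\prod_{m=1}^r\phi_m(l_{N,m})\big]$, and the goal is $E_N\to\mathbb{E}\big[g\big((y_m)_{m\le r}\big)\big]\prod_{m=1}^r c_m$, which is exactly $\mathbb{E}\big[g((y_m))\prod_m f_m(\chi_m)\big]$ for $(\chi_m)$ i.i.d.\ of law $D(\mathcal{L})$ and independent of $(y_m)$.

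The key device is to group the cycle lengths by the blocks appearing in Lemma \ref{Psi}: $B_q:=\{q^2,\dots,(q+1)^2-1\}$ for $q\ge1$, of cardinality $2q+1$, so every $l\ge1$ lies in $B_{q(l)}$ with $q(l):=\lfloor\sqrt l\rfloor$. Since $(l_{N,m}/N)_{m\le r}$ converges in law to a GEM($\theta$) vector, whose entries are a.s.\ positive, $\mathbb{P}[\,l_{N,m}\ge L\ \forall m\le r\,]\to1$ for every fixed $L$; and since a GEM($\theta$) partial sum $\sum_{m\le r}y_m$ is a.s.\ $<1$, one also gets $\mathbb{P}\big[\sum_{m\le r}l_{N,m}\ge N-2N^{2/3}\big]\to0$. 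Hence, restricting the expectation defining $E_N$ to the event $G_{N,L}$ that all $l_{N,m}\ge L$ and $\sum_{m\le r}(q(l_{N,m})+1)^2\le N-N^{2/3}$ costs only $o_N(1)$ (all integrands are bounded by $1$), and on $G_{N,L}$ one may condition on the block vector $\vec q=(q(l_{N,1}),\dots,q(l_{N,r}))$. The length hypotheses of Lemma \ref{Psi} hold on $G_{N,L}$, so given $\vec q$ the conditional law of $(l_{N,m})_{m\le r}$ agrees with the uniform law on $\prod_m B_{q_m}$ up to a multiplicative factor in $(1-\Psi_{r,\theta}(N),1+\Psi_{r,\theta}(N))$; moreover $l_{N,m}/N$ oscillates by at most $2/\sqrt N$ inside $B_{q_m}$, so uniform continuity of $g$ on the compact simplex $\{x\in[0,1]^r:\sum x_m\le1\}$ lets one replace $g\big((l_{N,m}/N)_{m\le r}\big)$ by $g\big((q_m^2/N)_{m\le r}\big)$ up to a vanishing error. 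This yields, up to errors tending to $0$,
\[
E_N\;\approx\;\sum_{\vec q}\mathbb{P}[\vec q(l_{N,\cdot})=\vec q]\;g\big((q_m^2/N)_{m\le r}\big)\prod_{m=1}^r\Big(\tfrac{1}{2q_m+1}\textstyle\sum_{p\in B_{q_m}}\phi_m(p)\Big).
\]

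Now each inner average equals $\int_{\mathbb{U}}\tilde f_m\,d\big(\tfrac{1}{2q+1}\sum_{p=q^2}^{(q+1)^2-1}\mathcal{L}^{*p}\big)$, which by Lemma \ref{L*} (applied with the admissible window lengths $d_k=2\lfloor\sqrt k\rfloor+1\to\infty$, specialised to $k=q^2$) converges to $c_m$ as $q\to\infty$; hence for $L$ large it differs from $c_m$ by at most some $\eta_L\to0$, uniformly over $q_m\ge\lfloor\sqrt L\rfloor$ and $m\le r$. Substituting, and noting that $\sum_{\vec q}\mathbb{P}[\vec q(l_{N,\cdot})=\vec q]\,g\big((q_m^2/N)_{m\le r}\big)=\mathbb{E}\big[g\big((l_{N,m}/N)_{m\le r}\big)\big]+o_N(1)\to\mathbb{E}\big[g\big((y_m)_{m\le r}\big)\big]$ (again by uniform continuity of $g$, the truncation estimates, and Proposition \ref{lengths}), one gets $\limsup_N\big|E_N-\mathbb{E}[g((y_m))]\prod_m c_m\big|=O(\eta_L)$ for every $L$; letting $L\to\infty$ finishes the proof. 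The main obstacle is precisely the balance forced in the middle step: the convolution powers $\mathcal{L}^{*p}$ genuinely oscillate in $p$ and have no individual limit, so convergence can come only from averaging over a window that is wide enough for Lemma \ref{L*} to act yet narrow relative to $N$, so that Lemma \ref{Psi} makes $l_{N,m}$ essentially uniform on it while $l_{N,m}/N$ barely moves — and then the three error terms ($\Psi_{r,\theta}(N)$, the modulus of continuity of $g$, and $\eta_L$) must be dispatched in the right order, first $N\to\infty$ and only afterwards $L\to\infty$.
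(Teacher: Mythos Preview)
Your proposal is correct and follows essentially the same route as the paper: both arguments group the cycle lengths into the square blocks $B_q=\{q^2,\dots,(q+1)^2-1\}$, use Lemma~\ref{Psi} to replace the conditional law of $(l_{N,m})_{m\le r}$ given the block indices by the uniform law on $\prod_m B_{q_m}$, invoke Lemma~\ref{L*} on the resulting block averages of $\mathcal{L}^{*p}$, and finish by swapping $l_{N,m}/N$ for $q_{N,m}^2/N$ via uniform continuity. The only cosmetic differences are that the paper hard-wires the lower cutoff as $q_{N,m}\ge N^{1/3}$ (so the limits in $N$ and in the cutoff are taken simultaneously) whereas you introduce a separate parameter $L$ and send $L\to\infty$ after $N\to\infty$, and the paper tests against a single continuous $g$ on $(\mathbb{R}/\mathbb{Z})^r$ rather than a product $\prod_m f_m$; neither changes the substance.
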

\begin{proof}
Let $q_{N,m}$ be the integer part of the square root of $l_{N,m}$. For all continuous functions
$g$ from $(\mathbb{R} / \mathbb{Z})^r$ to $\mathbb{R}_+$, one has:
$$\mathbb{E} [g((\gamma_{N,m})_{m \leq r} ) | (q_{N,m})_{m \leq r}] 
 =\int_{\mathbb{U}^r} g(\log z_1/2 i \pi, \dots, \log z_r/2 i \pi) \, d \mathcal{N}_{(q_{N,m})_{m \leq r}}(z_1,\dots,z_r),$$
where $\mathcal{N}_{(q_{N,m})_{m \leq r}} $ is defined as the conditional expectation of a random measure, more precisely:
$$\mathcal{N}_{(q_{N,m})_{m \leq r}} = \mathbb{E} \left[ \left. \bigotimes_{m = 1}^r \mathcal{L}^{*l_{N,m}} 
 \, \right| \, (q_{N,m})_{m \leq r} \right].$$
Let us suppose that $q_{N,m} \geq N^{1/3}$ for all $m \leq r$ and:
$$N - \sum_{m =1}^r (q_{N,m} + 1)^2 > N^{2/3}.$$
By Lemma \ref{Psi}, the conditional law of $(l_{N,m})_{m \leq r}$ given $(q_{N,m})_{m \leq r}$ is 
a linear combination of Dirac measures on the $r$-uples $(l_m)_{m \leq r}$ of integers such that 
$q_{N,m}^2 \leq l_m < (q_{N,m}+1)^2$, and the quotient between two coefficients of this combination lies on 
the interval $(1- \Psi_{r, \theta} (N), 1 + \Psi_{r, \theta} (N))$.
One deduces that
$$\left(1 - \Psi_{r, \theta} (N) \right) \, \bigotimes_{m=1}^r
\mathcal{L}'_{q_{N,m}} \leq \mathcal{N}_{(q_{N,m})_{m \leq r}} 
\leq \left(1 + \Psi_{r, \theta} (N) \right) \, \bigotimes_{m=1}^r
\mathcal{L}'_{q_{N,m}} $$
where, for $q \geq 1$:
$$\mathcal{L}'_{q} = \frac{1}{2q+1} \, \sum_{l=q^2}^{q^2 + 2q} \, \mathcal{L}^{*l}.$$
Hence:
$$\mathbb{E} [g((\gamma_{N,m})_{m \leq r} ) | (q_{N,m})_{m \leq r}] 
 = C \, \int_{\mathbb{U}^r} g(\log z_1 / 2i \pi, \dots, \log z_r/2 i \pi) \, \prod_{m=1}^r \mathcal{L}'_{q_{N,m}} (dz_m) $$
for
$$1 - \Psi_{r, \theta} (N) \leq C \leq 1 + \Psi_{r, \theta} (N).$$
Now, by Lemma \ref{L*}, $\mathcal{L}'_{q}$ converges to $\mathcal{L}^*$ when $q$ goes to infinity, hence,
there exists a function $K_g$ from $\mathbb{R}_+$ to $\mathbb{R}_+$, tending to zero at infinity, and such
that for all $q \geq q_0 \geq 1$, $q$ integer:
\begin{align*}
& \left| 
\int_{\mathbb{U}^r} g(\log z_1 / 2 i \pi, \dots, \log z_r/ 2 i \pi) \, \prod_{m=1}^r \mathcal{L}'_{q_{N,m}} (dz_m) \right. \\ 
-  & \left. \int_{\mathbb{U}^r} g(\log z_1 / 2 i \pi, \dots, \log z_r/ 2 i \pi) \, \prod_{m=1}^r \mathcal{L}^* (dz_m) \right| 
\leq K_g(q_0).
\end{align*}
Therefore, if $N$ is large enough, if $q_{N,m} \geq N^{1/3}$ for all $m \leq r$ and if
$$N - \sum_{m =1}^r (q_{N_m} + 1)^2 > N^{2/3},$$
then:
$$\mathbb{E} [g((\gamma_{N,m})_{m \leq r} ) | (q_{N,m})_{m \leq r}] 
 \in [A_1,B_1],$$
for 
$$A_1 = \left(1 - \Psi_{r, \theta} (N)\right) \,   \left[- K_g(N^{1/3}) + 
 \int_{\mathbb{U}^r} g(\log z_1 / 2 i \pi, \dots, \log z_r/ 2 i \pi) \, \prod_{m=1}^r D(\mathcal{L}) (dz_m) \right] $$
and 
$$B_1 = \left(1 + \Psi_{r, \theta} (N)\right) \,   \left[ K_g(N^{1/3}) + 
\int_{\mathbb{U}^r} g(\log z_1 / 2 i \pi, \dots, \log z_r/ 2 i \pi) \, \prod_{m=1}^r D(\mathcal{L}) (dz_m)\right]. $$
Now, let $f$ be a continuous function from $[0,1]^r$ to $\mathbb{R}_+$.
One has:
$$\mathbb{E} [f((q_{N,m}^2/N)_{m \leq r}) g((\gamma_{N,m})_{m \leq r} ) ]
\in [A_2,B_2],$$
for
$$A_2 = A_1 \mathbb{E} [f((q_{N,m}^2/N)_{m \leq r}) \mathds{1}_{\mathcal{E}} ]$$
and
$$B_2 = B_1 \mathbb{E} [f((q_{N,m}^2/N)_{m \leq r}) \mathds{1}_{\mathcal{E}} ]
+ ||f||_{\infty} ||g||_{\infty} \mathbb{P} [\mathcal{E}^c],$$
where $$\mathcal{E} := \{ \forall m \leq r, q_{N,m} \geq 
 N^{1/3}, N - \sum_{m =1}^r (q_{N,m} + 1)^2 > N^{2/3} \}$$
and $\mathcal{E}^c$ is the complement of the event $\mathcal{E}$. 
If we suppose that $(l_{N,m})_{N,m \geq 1}$ are the cycle lengths associated with a virtual permutation
following the Ewens($\theta$) measure, then for all $m \leq r$, $q_{N,m}^2/N$ tends a.s. to 
$y_m$, for a GEM($\theta$) processes $(y_m)_{m \geq 1}$, and the event $\mathcal{E}^c$ holds for finitely
many values of $N$. Therefore:
$$ \mathbb{E} [f((q_{N,m}^2/N)_{m \leq r}) \mathds{1}_{\mathcal{E}} ]
\underset{N \rightarrow \infty}{\longrightarrow} \mathbb{E} [f((y_m)_{m \leq r})]$$
and  $$\mathbb{P} [\mathcal{E}^c] \underset{N \rightarrow \infty}{\longrightarrow} 0. $$
One deduces that
$$\mathbb{E} [f((q_{N,m}^2/N)_{m \leq r}) g((\gamma_{N,m})_{m \leq r} ) ]
\underset{N \rightarrow \infty}{\longrightarrow} 
\mathbb{E} [f((y_m)_{m \leq r}) g((\chi_m)_{m \leq r})]$$
(recall that $(y_m)_{m \leq r}$ and $(\chi_m)_{m \leq r}$ are supposed to be independent). 
Now, $$\left| \frac{q_{N,m}^2}{N} - \frac{l_{N,m}}{N} \right| \leq \frac{2}{\sqrt{N}},$$
hence, by the uniform continuity of $f$ and the boundedness of $g$:
$$\mathbb{E} [f(l_{N,m}/N)_{m \leq r}) g((\gamma_{N,m})_{m \leq r} ) ]
\underset{N \rightarrow \infty}{\longrightarrow} 
\mathbb{E} [f((y_m)_{m \leq r}) g((\chi_m)_{m \leq r})],$$
which proves Lemma \ref{ygamma}.
\end{proof}
Now, let $f$ be a continuous, nonnegative function with compact support, and 
$\Phi$ a continuous function from $\mathbb{R}_+$ to $[0,1]$, such that 
$\Phi(x)=0$ for $x \leq 1$ and $\Phi(x) =1$ for $x \geq 2$. Let us suppose 
$f(0)= 0$ or $r(\mathcal{L}) = \infty$. We first remark that for all integers $r, s \geq 1$:
$$\sum_{m=1}^r \Phi(sy_{N,m}) \sum_{k \in \mathbb{Z}} f[(k + \gamma_{N,m})/y_{N,m}]
\underset{N \rightarrow \infty}{\longrightarrow} 
\sum_{m=1}^{r} \Phi(sy_{m})  \sum_{k \in \mathbb{Z}} f[(k + \chi_m)/y_m]$$
in distribution. This is a consequence of Lemma \ref{ygamma} and the fact that
$$\Phi(sy) \sum_{k \in \mathbb{Z}} f[(k + \chi)/y]$$
is continuous with respect to $(y,\chi) \in [0,1] \times (\mathbb{R} / \mathbb{Z})$. 
Now, if $f$ is supported by $[-A,A]$ (for $A > 0$), one has the inequalities:
$$\sum_{k \in \mathbb{Z}} f[(k + \gamma_{N,m})/y_{N,m}] \leq (2A +1) ||f||_{\infty}$$
and 
$$\sum_{k \in \mathbb{Z}} f[(k + \chi_m)/y_{m}] \leq (2A +1) ||f||_{\infty}.$$
Therefore, for $\lambda \in \mathbb{R}_+$:
\begin{align*}
& \left| \mathbb{E} \left[ e^{i \lambda \sum_{m=1}^r \mathds{1}_{y_{N,m} > 0}
 \sum_{k \in \mathbb{Z}} f[(k + \gamma_{N,m})/y_{N,m}]}\right] -
\mathbb{E} \left[ e^{i \lambda \sum_{m=1}^r 
 \sum_{k \in \mathbb{Z}} f[(k + \chi_m)/y_{m}]} \right] \right| 
 \\ \leq & \left| \mathbb{E} \left[ e^{i \lambda \sum_{m=1}^r \Phi(sy_{N,m})
 \sum_{k \in \mathbb{Z}} f[(k + \gamma_{N,m})/y_{N,m}]} \right] -
\mathbb{E} \left[ e^{i \lambda \sum_{m=1}^r \Phi(sy_m)
 \sum_{k \in \mathbb{Z}}  f[(k + \chi_m)/y_{m}]} \right] \right| 
\\ + & (2A+1)\lambda||f||_{\infty} \sum_{m=1}^r \left( \mathbb{P}[0 < y_{N,m} \leq 2/s] + \mathbb{P}
[y_m \leq 2/s] \right).
\end{align*}
\noindent
The first term in the right-hand side of this inequality tends to zero when $N$ goes to infinity. 
Hence:
\begin{align*}
& \underset{N \rightarrow \infty}{\operatorname{lim \, sup}} \, 
\left| \mathbb{E} \left[ e^{i \lambda \sum_{m=1}^r \mathds{1}_{y_{N,m} > 0}
 \sum_{k \in \mathbb{Z}} f[(k + \gamma_{N,m})/y_{N,m}]}\right] -
\mathbb{E} \left[ e^{i \lambda \sum_{m=1}^r 
 \sum_{k \in \mathbb{Z}} f[(k + \chi_m)/y_{m}]} \right] \right| \\ 
& \leq (2A+1)\lambda||f||_{\infty} \sum_{m=1}^r \left( 
\underset{N \rightarrow \infty}{\operatorname{lim \, sup}} \, \mathbb{P}[y_{N,m} \leq 2/s] + \mathbb{P}
[y_m \leq 2/s] \right).
\end{align*}
\noindent
Now if we assume that $(y_{N,m})_{N,m \geq 1}$ is the family of renormalized cycle lengths of 
a virtual permutation following the Ewens($\theta$) measure, we obtain, 
 by Fatou's lemma:
$$\underset{N \rightarrow \infty}{\operatorname{lim \, inf}} \, \mathbb{P} [y_{N,m} > 2/s]  \geq
\mathbb{E} \left[\underset{N \rightarrow \infty}{\operatorname{lim \, inf}}
\, \mathds{1}_{y_{N,m} > 2/s} \right].$$
Since $y_{N,m}$ tends to $y_m$ for $N$ going to infinity:
$$\underset{N \rightarrow \infty}{\operatorname{lim \, sup}} \, \mathbb{P} [y_{N,m} \leq 2/s]  
\leq \mathbb{P} [y_m \leq 2/s].$$
By taking $s \rightarrow \infty$, we deduce that:
\begin{equation}
\left| \mathbb{E} \left[ e^{i \lambda \sum_{m=1}^r \mathds{1}_{y_{N,m} > 0}
 \sum_{k \in \mathbb{Z}} f[(k + \gamma_{N,m})/y_{N,m}]}\right] -
\mathbb{E} \left[ e^{i \lambda \sum_{m=1}^r 
 \sum_{k \in \mathbb{Z}} f[(k + \chi_m)/y_{m}]} \right] \right| \label{Fourier}
\end{equation}
tends to zero when $N$ goes to infinity. 
Now, for all $\chi \in [0,1)$ and $y \in (0,1]$:
$$\sum_{k \in \mathbb{Z}} f[(k + \chi)/y] \leq (2A+1) ||f||_{\infty} \left( \mathds{1}_{\chi \leq Ay}
+ \mathds{1}_{1-\chi \leq Ay} \right)$$
and if $f(0) = 0$, $\chi = 0$:
$$\sum_{k \in \mathbb{Z}} f[(k + \chi)/y] \leq (2A+1) ||f||_{\infty} \mathds{1}_{Ay \geq 1}.$$
Therefore, if $\gamma_{N,m}$ is identified with the unique element of $[0,1)$ in its
congruence class modulo 1, then:
\begin{align}
& \mathbb{E} \left[ \sum_{m=r+1}^{\infty} \mathds{1}_{y_{N,m} > 0}
 \sum_{k \in \mathbb{Z}} f[(k + \gamma_{N,m})/y_{N,m}] \right] \nonumber \\
 & \leq (2A + 1) ||f||_{\infty} \sum_{m=r+1}^{\infty} 
\left[ \mathbb{P} (y_{N,m} > 0,  0 < \gamma_{N,m} \leq Ay_{N,m}) \right. \nonumber  \\
& \left. + \mathbb{P} (y_{N,m} > 0,  1 - \gamma_{N,m} \leq Ay_{N,m})
+ \mathbb{P} (\gamma_{N,m} = 0, Ay_{N,m} \geq 1) \right] \label{S}
\end{align}
if $f(0) = 0$. In fact, this inequality remains true for $f(0)>0$. Indeed, in this case, 
one has, by assumption, $r(\mathcal{L})= \infty$, and then by the conditions given in Proposition \ref{conv2}, 
$\mathcal{L}$ has no atom. Since
\begin{align*}
& \mathbb{E} \left[ \sum_{m=r+1}^{\infty} \mathds{1}_{y_{N,m} > 0}
 \sum_{k \in \mathbb{Z}} f[(k + \gamma_{N,m})/y_{N,m}] \right]
\\ \leq & (2A + 1) ||f||_{\infty} \sum_{m=r+1}^{\infty} 
\left[ \mathbb{P} ( y_{N,m} > 0,  \gamma_{N,m} \leq Ay_{N,m}) + \mathbb{P} (y_{N,m} > 0,
 1 - \gamma_{N,m} \leq Ay_{N,m}) \right],
\end{align*}
we also have  \eqref{S}. 
Now, for all $\rho \in (0,1)$:
$$\mathbb{P} [y_{N,m} > 0, 0 < \gamma_{N,m} \leq Ay_{N,m}] = 
\mathbb{P} [y_{N,m} > 0, 0 < \gamma_{N,m} \leq \rho^m] + \mathbb{P} [y_{N,m} \geq \rho^m/A].$$
Since conditionally on $\{y_{N,m}> 0\}$,  the law of $\gamma_{N,m}$ is a convex combination
 of iterated convolutions of $\mathcal{L}$, 
it satisfies the same assumptions as $\mathcal{L}$, with the same underlying constants.  
One deduces that there exist $v > 1$, $m_0 \geq 1$, $C > 0$, independent of
$N$, such that for $m \geq m_0$:
\begin{equation}
\mathbb{P} [y_{N,m} > 0, 0 < \gamma_{N,m} \leq \rho^m] \leq C m^{-v}. \label{gamma1}
\end{equation}
Now, if the normalized lengths of cycles $(y_{N,m})_{N, m \geq 1}$ are associated with a virtual permutation
following the Ewens($\theta$) measure, and if for all $m \geq 1$, $s_m$ is the supremum of $y_{N,m}$ for 
all $N \geq 1$, then the expectation of $s_m$ decreases exponentially with $m$ (this result is contained 
in our proof of \eqref{sm} above). One deduces that there exist $\rho \in (0,1)$
and $m_1 \geq 1$, independent of $N$ and such that for $m \geq m_0$:
\begin{equation}
\mathbb{P} [y_{N,m} \geq \rho^m/A] \leq \rho^m. \label{gamma2}
\end{equation}
From \eqref{gamma1} and \eqref{gamma2}, 
$$\sup_{N \geq 1} \sum_{m = r+1}^{\infty} \mathbb{P} [y_{N,m} > 0, 0 < \gamma_{N,m} \leq Ay_{N,m}] 
\underset{r \rightarrow \infty}{\longrightarrow} 0.$$
In a similar way, one can prove:
$$\sup_{N \geq 1} \sum_{m = r+1}^{\infty} \mathbb{P} [y_{N,m} > 0, 1 -  \gamma_{N,m} \leq Ay_{N,m}] 
\underset{r \rightarrow \infty}{\longrightarrow} 0$$
and
$$\sup_{N \geq 1} \sum_{m = r+1}^{\infty} \mathbb{P} [\gamma_{N,m} = 0, Ay_{N,m} \geq 1] 
\underset{r \rightarrow \infty}{\longrightarrow} 0.$$
Therefore, by \eqref{S}:
\begin{equation}
\sup_{N \geq 1} \mathbb{E} \left[ \sum_{m=r+1}^{\infty} \mathds{1}_{y_{N,m} > 0}
 \sum_{k \in \mathbb{Z}} f[(k + \gamma_{N,m})/y_{N,m}] \right] \underset{r \rightarrow \infty}
{\longrightarrow} 0. \label{S1}
\end{equation}
By replacing $\gamma_{N,m}$ by $\chi_m$ and $y_{N,m}$ by $y_m$, one obtains:
\begin{equation}
\mathbb{E} \left[ \sum_{m=r+1}^{\infty} \mathds{1}_{y_{m} > 0}
 \sum_{k \in \mathbb{Z}} f[(k + \chi_{m})/y_{m}] \right] \underset{r \rightarrow \infty}
{\longrightarrow} 0, \label{S2}
\end{equation}
since the law of $\chi_m$ satisfies the same assumptions as $\mathcal{L}$ and the expectation of 
$y_m$ decreases exponentially with $m$. Since the quantity given by \eqref{Fourier} tends to zero
when $N$ goes to infinity, one easily deduces from \eqref{S1} and \eqref{S2} that it is also the 
case for:
$$\left| \mathbb{E} \left[ e^{i \lambda \sum_{m=1}^{\infty} \mathds{1}_{y_{N,m} > 0}
 \sum_{k \in \mathbb{Z}} f[(k + \gamma_{N,m})/y_{N,m}]}\right] -
\mathbb{E} \left[ e^{i \lambda \sum_{m=1}^{\infty}
 \sum_{k \in \mathbb{Z}} f[(k + \chi_m)/y_{m}]} \right] \right|,$$ 
which proves Proposition \ref{conv2}.
\end{proof}
\section{The uniform case}
\subsection{Eigenvalues distributions and correlation measures}
In this section, we focus on the uniform case, i.e. the case where for $N \geq 1$, 
the random matrix $M_N$ follows the law $\mathbb{P} (N, \theta, \mathcal{L})$, where $\theta > 0$
 and $\mathcal{L}$ is the uniform distribution on the unit circle. Let $(y_{N,m})_{N,m \geq 1}$ 
be the family of renormalized cycle lengths corresponding to a virtual permutation following the
Ewens($\theta$) measure, $y_m$ the limit of $y_{N,m}$ for $N$ going to infinity, and $(\chi_m)_{m \geq 1}$
a sequence of i.i.d. uniform random variables on $[0,1)$, independent of $(y_{N,m})_{N,m \geq 1}$.
The random measure $\tau_N (M_N)$ has the same law as 
$$\bar{\tau}_{N} :=  \sum_{m=1}^{\infty} \mathds{1}_{y_{N,m} > 0} \, \sum_{k \in \mathbb{Z}}
\delta_{(\chi_m + k)/y_{N,m}}$$
and $\tau_{\infty} (\theta, \mathcal{L})$ is equal to 
$$\bar{\tau}_{\infty} := \sum_{m=1}^{\infty}
 \sum_{k \in \mathbb{Z}}
\delta_{(\chi_m + k)/y_{m}}$$
in distribution. This description of the law of $\tau_N (M_N)$ and $\tau_{\infty} (\theta, \mathcal{L})$ 
implies the following remarkable property:
\begin{proposition}
Under the assumptions given above, the distributions of the random measures $\tau_N (M_N)$ and $\tau_{\infty}
(\theta, \mathcal{L})$ are invariant by translation.
\end{proposition}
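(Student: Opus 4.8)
The plan is to work with the explicit representations $\bar\tau_N$ and $\bar\tau_\infty$ introduced just above the statement, which have the same laws as $\tau_N(M_N)$ and $\tau_\infty(\theta,\mathcal L)$ respectively; it therefore suffices to prove that each of these two random measures is invariant under the pushforward $T_t$ by $x\mapsto x+t$ for an arbitrary real number $t$. First I would record the elementary observation that, for fixed $y>0$, the measure $\sum_{k\in\mathbb Z}\delta_{(\chi+k)/y}$ depends on $\chi$ only through its class modulo $1$: replacing $\chi$ by $\chi+1$ merely reindexes the sum over $k$. Consequently each of $\bar\tau_N,\bar\tau_\infty$ is a measurable function of the family $(y_{N,m})_{m\ge1}$ (resp.\ $(y_m)_{m\ge1}$) together with $(\chi_m\bmod1)_{m\ge1}$, and this function is the same in both cases, built in a fixed deterministic way.

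Next I would compute the effect of a translation. One has $T_t\,\delta_{(\chi_m+k)/y_{N,m}}=\delta_{(\chi_m+k)/y_{N,m}+t}=\delta_{(\chi_m+t\,y_{N,m}+k)/y_{N,m}}$, so that
$$T_t\,\bar\tau_N=\sum_{m=1}^\infty\mathds 1_{y_{N,m}>0}\sum_{k\in\mathbb Z}\delta_{(\chi_m+t\,y_{N,m}+k)/y_{N,m}}.$$
Comparing with the definition of $\bar\tau_N$, the translated measure is obtained from the original one simply by replacing the i.i.d.\ uniform sequence $(\chi_m)_{m\ge1}$ by $(\chi_m+t\,y_{N,m})_{m\ge1}$, the shift amount in the $m$-th coordinate being $t\,y_{N,m}$, which is measurable with respect to the $\sigma$-algebra generated by $(y_{N,m})_{m\ge1}$.

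The core of the argument is then a conditioning step, and this is precisely the point where the hypothesis that $\mathcal L$ is \emph{uniform} enters (so that each $\chi_m$ is uniform on $[0,1)$). Conditionally on $(y_{N,m})_{m\ge1}$ the variables $\chi_m$ remain independent and uniform on $\mathbb R/\mathbb Z$, and the map $\chi_m\mapsto\chi_m+t\,y_{N,m}$ is, given $(y_{N,m})_{m\ge1}$, a deterministic translation of the circle $\mathbb R/\mathbb Z$; since the uniform (Haar) law on the circle is invariant under deterministic translations, the family $(\chi_m+t\,y_{N,m}\bmod1)_{m\ge1}$ has, conditionally on $(y_{N,m})_{m\ge1}$, the same joint law as $(\chi_m\bmod1)_{m\ge1}$. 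Because $\bar\tau_N$ depends on the $\chi_m$ only through their classes modulo $1$ and is assembled from $(y_{N,m})_{m\ge1}$ in the same deterministic way before and after translation, we obtain $\mathbb E[F(T_t\bar\tau_N)\mid(y_{N,m})_{m\ge1}]=\mathbb E[F(\bar\tau_N)\mid(y_{N,m})_{m\ge1}]$ for every bounded measurable functional $F$ on the space of Radon measures; integrating over $(y_{N,m})_{m\ge1}$ yields $T_t\bar\tau_N\stackrel{d}{=}\bar\tau_N$. The identical argument, with $(y_{N,m})_{m\ge1}$ replaced by $(y_m)_{m\ge1}$, handles $\bar\tau_\infty$, and hence $\tau_\infty(\theta,\mathcal L)$.

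I do not expect a real obstacle here; the only slightly delicate points are the reduction to dependence on $\chi_m$ modulo $1$ (so that the translation on $\mathbb R$ genuinely acts as a rotation of the circle-valued variables $\chi_m$) and the bookkeeping that a translation of the point measure corresponds to a per-coordinate additive shift by $t\,y_{N,m}$, an amount that is random but measurable with respect to the conditioning $\sigma$-algebra. Everything else reduces to translation invariance of Haar measure on $\mathbb R/\mathbb Z$ combined with Fubini.
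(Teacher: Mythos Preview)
Your proposal is correct and follows essentially the same approach as the paper: reduce to $\bar\tau_N$ and $\bar\tau_\infty$, observe that a translation by $t$ amounts to replacing each $\chi_m$ by the fractional part of $\chi_m+t\,y_{N,m}$, and then use that, conditionally on the cycle lengths, the $\chi_m$ are i.i.d.\ uniform on $\mathbb{R}/\mathbb{Z}$ and hence invariant under such deterministic rotations. Your write-up is somewhat more detailed (in particular the explicit remark that the measure depends on $\chi_m$ only modulo $1$), but the idea is identical.
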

\begin{proof}
It is enough to prove it for $\bar{\tau}_{N}$ and $\bar{\tau}_{\infty}$. The image of $\bar{\tau}_N$ by
a translation of $A \in \mathbb{R}$ is equal to
$$\sum_{m=1}^{\infty} \mathds{1}_{y_{N,m} > 0} \, \sum_{k \in \mathbb{Z}} \delta_{(\chi'_m+k)/y_{N,m}},$$
where $\chi'_m$ is the fractional part of $\chi_m + A y_{N,m}$. Now one easily sees that
 conditionally on $(y_{N,m})_{m \geq 1}$, the sequence $(\chi'_m)_{m \geq 1}$ is an i.i.d. sequence of 
uniform variables on $[0,1)$, as $(\chi_m)_{m \geq 1}$. This implies the invariance by translation 
of $\bar{\tau}_{N}$. For $\bar{\tau}_{\infty}$, the proof is exactly similar. 
\end{proof}
\noindent
The main interest of the introduction of the measures $\bar{\tau}_N$ and $\bar{\tau}_{\infty}$ is the 
following: we cannot expect an a.s. convergence of the random measure
$\tau_N (M_N)$ when $N$ goes to infinity, even if $(M_N)_{N \geq 1}$ follows 
the distribution $\mathbb{P} (\infty, \theta, \mathcal{L})$ because, with the notation of
the previous section, the "shifts" $\gamma_{N,m}$ (which are equal to $\chi_m$ in distribution), do
not converge a.s. when $N$ goes to infinity; however, since we take the same variables $\chi_m$ in 
the definitions of $\bar{\tau}_{N}$ and $\bar{\tau}_{\infty}$, we can expect such a convergence 
for these new measures. More precisely, we have the following:
\begin{proposition} \label{bartau}
Almost surely, with the notation above, the random measure $\bar{\tau}_{N}$  converges vaguely to
$\bar{\tau}_{\infty}$, which is locally finite. 
\end{proposition}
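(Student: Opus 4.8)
The plan is to reduce the vague convergence to a statement about finitely many cycles. Fix an integer $A \ge 1$; since any compactly supported continuous $f$ is supported in some $[-A,A]$, it suffices to produce, for each $A$, an almost sure event on which $\int f\, d\bar\tau_N \to \int f\, d\bar\tau_\infty$ for every continuous $f$ with support in $[-A,A]$, and then to intersect over $A$. Writing $s_m := \sup_{N\ge 1} y_{N,m}$, I would use three facts already available: $\sum_{m\ge 1} s_m < \infty$ a.s.\ and $\sum_{m\ge 1}\mathbb{E}[s_m] < \infty$ (both established in the proof of Proposition~\ref{empirical}, see \eqref{2} and \eqref{sm}); $y_{N,m} \to y_m$ a.s.\ for every $m$ (Proposition~\ref{lengths}); and, since $\theta > 0$, $y_m > 0$ a.s.\ for every $m$.

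The crucial step is to show that, almost surely, only finitely many indices $m$ ever contribute a nonzero term to $\int f\, d\bar\tau_N$, uniformly in $N$. Indeed, once $s_m < 1/A$ — which holds for all but finitely many $m$, since $\sum_m s_m < \infty$ — any $N$ with $y_{N,m} > 0$ has $A y_{N,m} < 1$, so an integer $k$ with $(\chi_m + k)/y_{N,m} \in [-A,A]$ must satisfy $\chi_m + k \in [-Ay_{N,m}, Ay_{N,m}] \subseteq (-1,1)$; as $\chi_m \in [0,1)$ this forces $k \in \{-1,0\}$ and $\chi_m \in [0, Ay_{N,m}] \cup [1 - Ay_{N,m}, 1) \subseteq [0, As_m] \cup [1 - As_m, 1)$. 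Thus, whenever $s_m < 1/A$ and $\chi_m \notin [0, As_m] \cup [1 - As_m, 1)$, the $m$-th term of $\bar\tau_N$ vanishes for every $N$, and (using $y_m \le s_m$) so does the $m$-th term of $\bar\tau_\infty$. Since $\chi_m$ is independent of $s_m$, $\mathbb{P}[\chi_m \in [0,As_m] \cup [1-As_m,1)] \le 2A\,\mathbb{E}[s_m]$, which is summable by \eqref{sm}; by the Borel--Cantelli lemma only finitely many $m$ have $\chi_m \in [0,As_m] \cup [1-As_m,1)$. Hence on an almost sure event $\Omega_A$ there is a random $M$ such that, for all $m > M$ and all $N$, the $m$-th terms of both $\bar\tau_N$ and $\bar\tau_\infty$ are zero.

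It then remains to pass to the limit in the resulting finite sums. On $\Omega_A$, for $f$ supported in $[-A,A]$ one has $\int f\, d\bar\tau_N = \sum_{m=1}^{M}\mathds{1}_{y_{N,m}>0}\sum_{k\in\mathbb{Z}} f((\chi_m+k)/y_{N,m})$ and $\int f\, d\bar\tau_\infty = \sum_{m=1}^{M}\sum_{k\in\mathbb{Z}} f((\chi_m+k)/y_m)$; for each of the finitely many $m \le M$, $\mathds{1}_{y_{N,m}>0} = 1$ for $N$ large (as $y_m > 0$), the relevant $k$ range over the fixed finite set $\{|k| \le As_m + 1\}$, and each $f((\chi_m+k)/y_{N,m}) \to f((\chi_m+k)/y_m)$ by continuity, so $\int f\, d\bar\tau_N \to \int f\, d\bar\tau_\infty$. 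Taking $f$ to be a bump equal to $1$ on $[-A,A]$ shows in passing that $\bar\tau_\infty([-A,A]) < \infty$ a.s., whence, intersecting over $A$, that $\bar\tau_\infty$ is locally finite. I expect the only genuinely non-routine point to be the second step: recognizing that a small cycle can place an eigenangle in the fixed window $[-A,A]$ only when its random shift $\chi_m$ falls within $O(s_m)$ of an integer, and that the already-proved summability $\sum_m \mathbb{E}[s_m] < \infty$ makes this happen for only finitely many $m$ — which is exactly what freezes the sums and legitimizes the termwise limit.
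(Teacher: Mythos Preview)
Your proof is correct and follows essentially the same route as the paper's: both bound the range of contributing pairs $(m,k)$ via $s_m = \sup_N y_{N,m}$ and the condition $\chi_m \in [0,As_m] \cup [1-As_m,1)$, reduce to a finite sum independent of $N$, and then pass to the limit termwise. The only differences are cosmetic: you spell out the Borel--Cantelli step (which the paper compresses into ``since $\mathbb{E}[\sum_m s_m] < \infty$, there exists a.s.\ a finite subset $S$'') and you derive local finiteness of $\bar\tau_\infty$ directly rather than by citing Proposition~\ref{conv2}.
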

\begin{proof}
The fact that $\bar{\tau}_{\infty}$ is locally finite is a consequence of Proposition \ref{conv2} (one 
has $r(\mathcal{L}) = \infty$). Now, let $f$ be a continuous function from $\mathbb{R}$ to $\mathbb{R}$, 
with support included in $[-A,A]$ (for some $A > 0$). One has:
$$\int_{\mathbb{R}} f \, d\bar{\tau}_{N} =
  \sum_{m=1}^{\infty}  \sum_{k \in \mathbb{Z}} \mathds{1}_{y_{N,m} > 0} f((\chi_m + k)/y_{N,m})$$
and
$$\int_{\mathbb{R}} f \, d\bar{\tau}_{\infty} =
  \sum_{m=1}^{\infty}  \sum_{k \in \mathbb{Z}} f((\chi_m + k)/y_{m}).$$
Let $s_m$ be the supremum of $y_{N,m}$ for $N \geq 1$. For $|k| > A + 1$ or $A s_m < \chi_m < 1 - As_m$,
one has: 
$$\mathds{1}_{y_{N,m} > 0} f((\chi_m + k)/y_{N,m}) =  f((\chi_m + k)/y_{m}) = 0,$$
and since 
$$\mathbb{E} \left[ \sum_{m \geq 1} s_m \right] < \infty,$$
there exists a.s. a (random) finite subset $S$ of $\mathbb{N}^* \times \mathbb{Z}$ such that:
$$\int_{\mathbb{R}} f \, d\bar{\tau}_{N} 
= \sum_{(m,k) \in S} \mathds{1}_{y_{N,m} > 0} f((\chi_m + k)/y_{N,m})$$
and $$\int_{\mathbb{R}} f \, d\bar{\tau}_{\infty} 
= \sum_{(m,k) \in S} f((\chi_m + k)/y_{m}).$$
Since a.s., for all $m \geq 1$, $k \in \mathbb{Z}$:
$$ \mathds{1}_{y_{N,m} > 0} f((\chi_m + k)/y_{N,m})
 \underset{N \rightarrow \infty}{\longrightarrow} f((\chi_m+k)/y_m),$$
one deduces Proposition \ref{bartau}.
\end{proof} 
\noindent
One can now study the correlation measures associated with the point processes studied above. 
More precisely, if 
$$\tau_N (M_N) = \sum_{j \geq 1} \delta_{x_j},$$
where $(x_j)_{j \geq 1}$ is a sequence of real numbers, one can define, for all $q \geq 1$ the random measure on 
$\mathbb{R}^q$:
$$\tau_N^{[q]} (M_N) = \sum_{j_1 \neq j_2 \neq ... \neq j_q} \delta_{(x_{j_1},...,x_{j_q})}.$$
Similarly, one can define $\tau_{\infty}^{[q]} (\theta, \mathcal{L})$,
$\bar{\tau}^{[q]}_{N}$ and $\bar{\tau}^{[q]}_{\infty}$. Of course, one has the following equalities in 
distribution:
$$\tau_N^{[q]} (M_N) = \bar{\tau}^{[q]}_{N}$$
and 
$$\tau_{\infty}^{[q]} (\theta, \mathcal{L}) = \bar{\tau}^{[q]}_{\infty}.$$
Proposition \ref{bartau} can be generalized as follows:
\begin{proposition} \label{bartauq}
For all $q \geq 1$ the random measure $\bar{\tau}^{[q]}_{N}$  converges vaguely towards
$\bar{\tau}^{[q]}_{\infty}$, which is locally finite. 
\end{proposition}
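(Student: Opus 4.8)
The plan is to reduce everything, exactly as in the proof of Proposition \ref{bartau}, to a finite sum whose terms converge one by one. Vague convergence of $\bar{\tau}^{[q]}_{N}$ to $\bar{\tau}^{[q]}_{\infty}$ means that for every continuous $f\colon \mathbb{R}^q\to\mathbb{R}$ with compact support one has $\int f\,d\bar{\tau}^{[q]}_{N}\to\int f\,d\bar{\tau}^{[q]}_{\infty}$ almost surely, and we may assume the support of $f$ is contained in $[-A,A]^q$ for some $A>0$. I would enumerate the atoms of $\bar{\tau}_N$ by the pairs $(m,k)\in\mathbb{N}^*\times\mathbb{Z}$ with $y_{N,m}>0$ (the atom of the pair $(m,k)$ sitting at $(\chi_m+k)/y_{N,m}$); since the positions of the atoms of $\bar{\tau}_N$ are almost surely pairwise distinct, distinct indices of atoms correspond almost surely to distinct pairs $(m,k)$, so one may write
$$\int_{\mathbb{R}^q} f\,d\bar{\tau}^{[q]}_{N}=\sum_{((m_1,k_1),\dots,(m_q,k_q))}\Bigl(\prod_{i=1}^q\mathds{1}_{y_{N,m_i}>0}\Bigr)\,f\Bigl(\tfrac{\chi_{m_1}+k_1}{y_{N,m_1}},\dots,\tfrac{\chi_{m_q}+k_q}{y_{N,m_q}}\Bigr),$$
the sum being over pairwise distinct $q$-tuples of such pairs, and similarly for $\bar{\tau}^{[q]}_{\infty}$ with $y_{N,m}$ replaced by $y_m$ and the indicators removed.

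The key step is to show that there is an almost surely finite set $S\subset\mathbb{N}^*\times\mathbb{Z}$, independent of $N$, such that a pair $(m,k)$ can give a nonzero contribution (for some $N$, or in the limit) only if $(m,k)\in S$. Writing $s_m:=\sup_{N\geq1}y_{N,m}$, a term indexed by $(m,k)$ is nonzero only if $|\chi_m+k|\leq A y_{N,m}\leq As_m$, which forces $|k|\leq As_m+1$ and, for the only values $k\in\{0,-1\}$ that remain when $m$ is large, $\chi_m\in[0,As_m]\cup[1-As_m,1)$. Conditionally on the family $(y_{N,m})_{N,m\geq1}$ the variables $\chi_m$ are i.i.d.\ uniform on $[0,1)$, so the conditional probability of the latter event is at most $2As_m$; since $\mathbb{E}\bigl[\sum_m s_m\bigr]<\infty$ (this is \eqref{sm}) we have $\sum_m 2As_m<\infty$ almost surely, and the conditional Borel--Cantelli lemma gives that almost surely only finitely many of these events occur. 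Together with the finitely many pairs coming from the finitely many indices $m$ for which $s_m$ is not small, this yields the desired finite set $S$, and the same $S$ serves for $\bar{\tau}^{[q]}_{\infty}$.

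Once this is in place the conclusion is immediate: the two sums above restrict, almost surely, to the finite sum over pairwise distinct $q$-tuples of elements of $S$, and since $y_{N,m}\to y_m>0$ for each $m$ (so in particular $\mathds{1}_{y_{N,m}>0}=1$ for $N$ large) and $f$ is continuous, each of the finitely many terms of $\int f\,d\bar{\tau}^{[q]}_{N}$ converges to the corresponding term of $\int f\,d\bar{\tau}^{[q]}_{\infty}$. Local finiteness of $\bar{\tau}^{[q]}_{\infty}$ follows from that of $\bar{\tau}_{\infty}$ (Proposition \ref{bartau}): if $\bar{\tau}_{\infty}$ has $L<\infty$ atoms in $[-A,A]$, then $\bar{\tau}^{[q]}_{\infty}$ has $L!/(L-q)!<\infty$ atoms in $[-A,A]^q$. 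The only genuine obstacle is the tail estimate producing the finite set $S$ uniformly in $N$, which is precisely where the summability $\sum_m s_m<\infty$ and the independence of the shifts $\chi_m$ enter; the passage from $q=1$ (Proposition \ref{bartau}) to general $q$ is purely combinatorial bookkeeping of distinct $q$-tuples, so in the write-up I would point to the proof of Proposition \ref{bartau} and only spell out this bookkeeping.
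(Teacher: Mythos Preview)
Your proposal is correct and follows essentially the same approach as the paper's own proof: both restrict the sums to $q$-tuples drawn from a single finite set $S\subset\mathbb{N}^*\times\mathbb{Z}$, independent of $N$, and then invoke termwise convergence. Your write-up is in fact slightly more explicit than the paper's at the one nontrivial point---you spell out the conditional Borel--Cantelli argument (using $\sum_m 2As_m<\infty$ a.s.) that guarantees only finitely many $m$ satisfy $\chi_m\in[0,As_m]\cup[1-As_m,1)$, whereas the paper simply asserts this from $\mathbb{E}\bigl[\sum_m s_m\bigr]<\infty$.
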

\begin{proof}
The fact that $\bar{\tau}^{[q]}_{\infty}$ is locally finite is a consequence of
the local finiteness of $\bar{\tau}_{\infty}$. 
Now, let $f$ be a continuous function from $\mathbb{R}^q$ to $\mathbb{R}$, 
with support included in $[-A,A]^q$ (for some $A > 0$). One has:
$$\int_{\mathbb{R}} f \, d\bar{\tau}^{[q]}_{N} =
  \sum_{(m_1,k_1) \neq ... \neq (m_q,k_q) \in \mathbb{N}^* \times \mathbb{Z}}
\mathds{1}_{y_{N,m_1},...,y_{N,m_q} > 0} f((\chi_{m_1} + k_1)/y_{N,m_1},..., (\chi_{m_q} + k_q)/y_{N,m_q})$$
and 
$$\int_{\mathbb{R}} f \, d\bar{\tau}^{[q]}_{\infty} =
  \sum_{(m_1,k_1) \neq ... \neq (m_q,k_q) \in \mathbb{N}^* \times \mathbb{Z}}
 f((\chi_{m_1} + k_1)/y_{m_1},..., (\chi_{m_q} + k_q)/y_{m_q})$$
Let $s_m$ be the supremum of $y_{N,m}$ for $N \geq 1$. If for some $j \leq q$,
 $|k_j| > A + 1$ or $A s_{m_j} < \chi_{m_j} < 1 - As_{m_j}$,
one has: 
$$\mathds{1}_{y_{N,m_1},...,y_{N,m_q} > 0} f((\chi_{m_1} + k_1)/y_{N,m_1},..., (\chi_{m_q} + k_q)/y_{N,m_q})=0$$
and
$$  f((\chi_{m_1} + k_1)/y_{m_1},..., (\chi_{m_q} + k_q)/y_{m_q})= 0.$$
Since
$$\mathbb{E} \left[ \sum_{m \geq 1} s_m \right] < \infty,$$
there exists a.s. a (random) finite subset $S$ of $\mathbb{N}^* \times \mathbb{Z}$ such that:
$$\int_{\mathbb{R}} f \, d\bar{\tau}^{[q]}_{N} =
  \sum_{(m_1,k_1) \neq ... \neq (m_q,k_q) \in S}
\mathds{1}_{y_{N,m_1},...,y_{N,m_q} > 0} f((\chi_{m_1} + k_1)/y_{N,m_1},..., (\chi_{m_q} + k_q)/y_{N,m_q})$$
and 
$$\int_{\mathbb{R}} f \, d\bar{\tau}^{[q]}_{\infty} =
  \sum_{(m_1,k_1) \neq ... \neq (m_q,k_q) \in S}
 f((\chi_{m_1} + k_1)/y_{m_1},..., (\chi_{m_q} + k_q)/y_{m_q})$$
Since a.s., for all $m_1,...,m_q \geq 1$, $k_1,...,k_q \in \mathbb{Z}$:
$$ \mathds{1}_{y_{N,m_1},...,y_{N,m_q} > 0} f((\chi_{m_1} + k_1)/y_{N,m_1},..., (\chi_{m_q} + k_q)/y_{N,m_q})$$
tends to $$f((\chi_{m_1} + k_1)/y_{m_1},..., (\chi_{m_q} + k_q)/y_{m_q})$$ when $N$ goes to infinity,
one deduces Proposition \ref{bartauq}.
\end{proof}
\noindent
Now, for $q \geq 1$, we define the "$q$-correlation measure" $\tilde{\tau}^{[q]}_N$
 associated with the point process $\tau_{N} (M_N)$ (or equivalently, 
$\bar{\tau}_N$), as the average of the measure $\tau_{N}^{[q]} (M_N)$ (or $\bar{\tau}^{[q]}_N$). 
Similarly, we define the measure $\tilde{\tau}_{\infty}^{[q]}$ as the average of 
$\tau_{\infty}^{[q]}(\theta, \mathcal{L})$ of $\bar{\tau}^{[q]}_{\infty}$. 
Both $\tilde{\tau}_N^{[q]}$ and $\tilde{\tau}_{\infty}^{[q]}$ are positive measures on $\mathbb{R}^q$. 
From the convergence of $\bar{\tau}^{[q]}_N$ toward $\bar{\tau}^{[q]}_{\infty}$, one deduces the following 
result:
\begin{proposition} \label{tildetau}
For all $q \geq 1$, the measures $\tilde{\tau}_N^{[q]}$ ($N \geq 1$) and $\tilde{\tau}_{\infty}^{[q]}$
are locally finite, and $\tilde{\tau}_N^{[q]}$ converges vaguely towards $\tilde{\tau}_{\infty}^{[q]}$ when 
$N$ goes to infinity. 
\end{proposition}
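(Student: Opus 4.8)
The plan is to deduce the statement from the almost sure vague convergence of the random measures $\bar{\tau}^{[q]}_N$ to $\bar{\tau}^{[q]}_{\infty}$ already proved in Proposition \ref{bartauq}, upgrading it to convergence of the averaged measures by a domination argument uniform in $N$. Since $\tau^{[q]}_N(M_N)$ has the same law as $\bar{\tau}^{[q]}_N$, the $q$-correlation measure is $\tilde{\tau}^{[q]}_N = \mathbb{E}[\bar{\tau}^{[q]}_N]$, and likewise $\tilde{\tau}^{[q]}_{\infty} = \mathbb{E}[\bar{\tau}^{[q]}_{\infty}]$. So it suffices, for every continuous $f$ supported in some cube $[-A,A]^q$, to produce an $N$-independent integrable random variable dominating $|\int f\, d\bar{\tau}^{[q]}_N|$, and then invoke dominated convergence.

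First I would bound the mass that $\bar{\tau}_N$ puts on a fixed symmetric interval $[-A,A]$. Writing $s_m := \sup_{N\ge 1} y_{N,m}$ as in the proof of Proposition \ref{empirical}, a point $(\chi_m+k)/y_{N,m}$ lies in $[-A,A]$ only if $k$ lies in the interval $[-Ay_{N,m}-\chi_m,\,Ay_{N,m}-\chi_m]$, which has length $2Ay_{N,m}\le 2A$ and hence contains at most $2A+1$ integers; and it contains an integer at all only if $\chi_m$ is within distance $Ay_{N,m}\le As_m$ of an integer (as soon as $As_m<1/2$). Hence
$$\bar{\tau}_N([-A,A]) \le (2A+1)\sum_{m\ge 1} Y_m =: D, \qquad Y_m := \mathds{1}_{As_m\ge 1/2} + \mathds{1}_{\chi_m \in [0,As_m]\cup[1-As_m,1)},$$
with $D$ independent of $N$, and the identical bound holds for $\bar{\tau}_{\infty}([-A,A])$ with the same $D$. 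Since $\bar{\tau}^{[q]}_N$ counts ordered $q$-tuples of distinct points, $\bar{\tau}^{[q]}_N([-A,A]^q)\le \bar{\tau}_N([-A,A])^q\le D^q$, and the same for $\bar{\tau}^{[q]}_{\infty}$; therefore $|\int f\, d\bar{\tau}^{[q]}_N| \le ||f||_{\infty}\, D^q$ whenever $f$ is supported in $[-A,A]^q$.

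The crucial step is to check that $D\in L^q$. By Minkowski, $||D||_q \le (2A+1)\sum_m ||Y_m||_q$; since $0\le Y_m\le 2$ we get $||Y_m||_q \le 2\,\mathbb{P}(Y_m>0)^{1/q} \le 2(4A\,\mathbb{E}[s_m])^{1/q}$, using Markov's inequality on $\{As_m\ge 1/2\}$ and conditioning on $s_m$ for the uniform variable $\chi_m$. Now, exactly as in the proof of Proposition \ref{empirical}, $(l_{N,m}/(N+\theta))_N$ is a nonnegative submartingale, so Doob's inequality gives $\mathbb{E}[s_m^2]\le C(\theta)\mathbb{E}[y_m^2]$, and $\mathbb{E}[y_m^2]$ decreases geometrically in $m$ because $(y_m)_m$ is a GEM$(\theta)$ process; hence $\mathbb{E}[s_m]\le \sqrt{\mathbb{E}[s_m^2]}$ also decays geometrically, and so $\sum_m(\mathbb{E}[s_m])^{1/q}<\infty$, proving $D\in L^q$. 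This immediately yields $\tilde{\tau}^{[q]}_N([-A,A]^q)=\mathbb{E}[\bar{\tau}^{[q]}_N([-A,A]^q)]\le \mathbb{E}[D^q]<\infty$ uniformly in $N$, and the same bound for $\tilde{\tau}^{[q]}_{\infty}$, which gives the local finiteness of all the measures. Finally, for continuous $f$ supported in $[-A,A]^q$, Proposition \ref{bartauq} gives $\int f\, d\bar{\tau}^{[q]}_N \to \int f\, d\bar{\tau}^{[q]}_{\infty}$ almost surely, and since this is dominated by the integrable variable $||f||_{\infty} D^q$, dominated convergence yields $\tilde{\tau}^{[q]}_N(f)=\mathbb{E}[\int f\, d\bar{\tau}^{[q]}_N]\to \mathbb{E}[\int f\, d\bar{\tau}^{[q]}_{\infty}]=\tilde{\tau}^{[q]}_{\infty}(f)$, i.e. the claimed vague convergence. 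The main obstacle is the moment bound $D\in L^q$: one must notice that summability of $\sum_m\mathbb{E}[s_m]$ alone does not suffice once $q\ge 2$, and that one genuinely needs the geometric decay of $\mathbb{E}[s_m]$ — which comes, via Doob's inequality and the GEM structure, from the geometric decay of $\mathbb{E}[s_m^2]$ already exploited earlier in the paper.
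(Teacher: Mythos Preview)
Your proof is correct and follows essentially the same strategy as the paper's: both construct an $N$-independent dominating variable of the form $(2A+1)^q\bigl(\sum_m \mathds{1}_{\chi_m\notin(As_m,1-As_m)}\bigr)^q$ and then apply dominated convergence together with Proposition~\ref{bartauq}. The only cosmetic difference is in bounding the $q$-th moment of the sum: the paper expands the power and groups terms by their maximal index to obtain $\sum_m m^q\,\mathbb{E}[s_m]<\infty$, whereas you use Minkowski's inequality to obtain $\sum_m(\mathbb{E}[s_m])^{1/q}<\infty$; both estimates rely on exactly the same ingredient, namely the geometric decay of $\mathbb{E}[s_m]$ established via Doob's inequality and the GEM structure.
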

\begin{proof}
Let $f$ be a continuous function from $\mathbb{R}$ to $\mathbb{R}$, with compact support. One has, 
$$\left|\int_{\mathbb{R}} f \, d\bar{\tau}^{[q]}_{N} \right| \leq
  ||f||_{\infty} \, \sum_{(m_1,k_1) \neq ... \neq (m_q,k_q) \in \mathbb{N}^* \times \mathbb{Z}}
\mathds{1}_{(|k_j| \leq A+1)_{j \leq q}, \left( \chi_{m_j} \notin (As_{m_j}, 1 - As_{m_j})\right)_{j \leq q}}.
$$
which is independent of $N$ and has a finite expectation. Indeed, this quantity is bounded by a constant times
$$\left[\sum_{m=1}^{\infty} \left( \mathds{1}_{\chi_m \notin (As_m, 1-As_m)} \right) 
\right]^q,$$
and 
\begin{align*}
\mathbb{E} \left[ \left(\sum_{m=1}^{\infty} \left( \mathds{1}_{\chi_m \notin (As_m, 1-As_m)} \right) 
\right)^q \, \right] & \leq \sum_{m_1,...,m_q \geq 1} \mathbb{P}[\left(\chi_{m_j}
 \notin (As_j, 1-As_j)\right)_{j \geq 1}]
\\ & \leq \sum_{m \geq 1} \sum_{\max \{m_j, 1 \leq j \leq q \} = m} 
\mathbb{P}[\chi_m \notin (As_m, 1-As_m) ]
\\ & \leq 2A \sum_{m \geq 1} m^q \mathbb{E}[s_m] < \infty,
\end{align*}
since the expectation of $s_m$ decreases exponentially with $m$ (see the proof of \eqref{sm}). 
Almost surely, by proposition \ref{bartau},
$$\int_{\mathbb{R}} f \, d\bar{\tau}^{[q]}_{N} \underset{N \rightarrow \infty}{\longrightarrow}
\int_{\mathbb{R}} f \, d\bar{\tau}^{[q]}_{\infty}, $$
and then one obtains, by taking  expectation and applying dominated convergence:
$$\int_{\mathbb{R}} f \, d\tilde{\tau}^{[q]}_{N} \underset{N \rightarrow \infty}{\longrightarrow}
\int_{\mathbb{R}} f \, d\tilde{\tau}^{[q]}_{\infty}, $$
where all these integrals are finite. This proves Proposition \ref{tildetau}.
\end{proof}
\noindent
For $q = 1$, the $q$-correlation measure of a subset of $\mathbb{R}$ is simply the average number
of points lying on this set. It can be very simply expressed:
\begin{proposition} \label{1correlation}
The 1-correlation measures $\tilde{\tau}^{[1]}_N$ ($N \geq 1$) and $\tilde{\tau}^{[1]}_{\infty}$ are equal to 
Lebesgue measure on $\mathbb{R}$.
\end{proposition}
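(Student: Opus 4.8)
The plan is to exploit the explicit description of $\tilde{\tau}_N^{[1]}$ as the expectation of the random measure $\bar{\tau}_N = \sum_{m \geq 1} \mathds{1}_{y_{N,m}>0} \sum_{k \in \mathbb{Z}} \delta_{(\chi_m + k)/y_{N,m}}$ (recall $\tau_N^{[1]}(M_N) = \tau_N(M_N)$ has the same law as $\bar{\tau}_N$). Since $\tilde{\tau}_N^{[1]}$ is a positive measure, it is enough to evaluate $\int_{\mathbb{R}} f \, d\tilde{\tau}_N^{[1]}$ for every nonnegative, continuous $f$ with compact support. First I would use Tonelli's theorem (all summands being nonnegative) to write
\[
\int_{\mathbb{R}} f \, d\tilde{\tau}_N^{[1]} = \sum_{m \geq 1} \mathbb{E}\left[ \mathds{1}_{y_{N,m}>0} \sum_{k \in \mathbb{Z}} f\!\left( \frac{\chi_m + k}{y_{N,m}} \right) \right].
\]

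Then I would condition on the family $(y_{N,m'})_{m' \geq 1}$, which is independent of $(\chi_m)_{m \geq 1}$, and invoke the following elementary fact: if $y > 0$ is fixed and $\chi$ is uniform on $[0,1)$, the point process $\sum_{k \in \mathbb{Z}} \delta_{(\chi+k)/y}$ is stationary, since translating it by $t$ amounts to replacing $\chi$ by the fractional part of $\chi + ty$, which is again uniform on $[0,1)$; as it has exactly one point in every half-open interval of length $1/y$, its intensity measure is $y$ times Lebesgue measure. Hence, for each $m$ with $y_{N,m}>0$,
\[
\mathbb{E}\left[ \left. \sum_{k \in \mathbb{Z}} f\!\left( \frac{\chi_m + k}{y_{N,m}} \right) \, \right| \, (y_{N,m'})_{m' \geq 1} \right] = y_{N,m} \int_{\mathbb{R}} f(x)\, dx,
\]
an identity which also holds trivially (both sides vanishing) when $y_{N,m}=0$. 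Summing over $m$ and taking expectations yields $\int_{\mathbb{R}} f \, d\tilde{\tau}_N^{[1]} = \mathbb{E}\bigl[\sum_{m \geq 1} y_{N,m}\bigr] \int_{\mathbb{R}} f(x)\, dx$.

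It then remains to note that $\sum_{m \geq 1} y_{N,m} = \tfrac1N \sum_{m \geq 1} l_{N,m} = 1$ almost surely, because the cycle lengths of a permutation of $\Sigma_N$ add up to $N$; this gives $\tilde{\tau}_N^{[1]} = \mathrm{Leb}$. The case of $\tilde{\tau}_\infty^{[1]}$ is handled in exactly the same way, with $y_{N,m}$ replaced by $y_m$ and using the basic fact that $\sum_{m \geq 1} y_m = 1$ almost surely for a GEM (equivalently, Poisson--Dirichlet) process of parameter $\theta$. I do not anticipate any real difficulty: the only steps needing a word of justification are the Tonelli interchange (immediate from positivity) and the intensity of a uniformly shifted lattice (a one-line stationarity argument), and the rest is bookkeeping.
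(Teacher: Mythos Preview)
Your proof is correct and follows essentially the same approach as the paper: condition on the normalized cycle lengths, integrate out the uniform variable $\chi_m$ to obtain $y_{N,m}\int_{\mathbb{R}} f$, and use $\sum_m y_{N,m}=1$. The only cosmetic difference is that you phrase the integration step as a stationarity/intensity argument, whereas the paper writes it as the direct change of variable $\sum_{k\in\mathbb{Z}}\int_0^1 f((k+x)/y_{N,m})\,dx = y_{N,m}\int_{\mathbb{R}} f$.
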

\begin{proof}
Let $f$ be a nonnegative, continuous function from $\mathbb{R}$ to $\mathbb{R}$. One has:
\begin{align*}
\mathbb{E} \left[ \left. \int_{\mathbb{R}} f \, d\bar{\tau}_N \, \right| \, (y_{N,m})_{m \geq 1}  \right] 
& =
 \sum_{m \geq 1} \mathds{1}_{y_{N,m} > 0} \sum_{k \in \mathbb{Z}} \int_0^1 f((k+x)/y_{N,m}) dx
\\ & = \sum_{m \geq 1} y_{N,m} \int_{\mathbb{R}} f(z) dz \\ & = 
\int_{\mathbb{R}} f(z) dz,
\end{align*}
\noindent
which proves Proposition \ref{1correlation} for $\tilde{\tau}^{[1]}_N$. The proof for $\tilde{\tau}^{[1]}_{\infty}$
is similar.
\end{proof}
\noindent
We remark that for $q \geq 2$, the correlation measure $\tilde{\tau}^{[q]}_N$ is not absolutely continuous
with respect to the Lebesgue measure. Indeed, for all integers $l$, $1 \leq l \leq N$,
the probability that the point process associated with $\bar{\tau}_N$ has two points separated
by an interval of exactly $N/l$ is not equal to zero (this event holds if the corresponding permutation
has a cycle of length $l$). Similarly, for $q \geq 3$, $\tilde{\tau}^{[q]}_{\infty}$ is not absolutely
continuous with respect to the Lebesgue measure, since the point process associated with $\bar{\tau}_{\infty}$
has almost surely three distinct points $x, y, z \in \mathbb{R}$ such that $y-x=z-y$. 

However, despite the fact that $\tilde{\tau}^{[2]}_N$ is not absolutely continuous with respect to the Lebesgue 
measure for all $N \geq 1$, its limit $\tilde{\tau}^{[2]}_{\infty}$ is absolutely continuous with
respect to the Lebesgue measure. More precisely one has the following: 
\begin{proposition} \label{2correlation}
The measure $\tilde{\tau}^{[2]}_{\infty}$ on $\mathbb{R}^2$ (which depends on the parameter $\theta$) 
has a density $\rho$  with respect to the Lebesgue measure, which is called "2-correlation function", and which is
given by: $$\rho(x,y) = \phi_{\theta}(x-y),$$
where the function $\phi_{\theta}$ from $\mathbb{R}$ to $\mathbb{R}$ is defined by:
$$\phi_{\theta} (x) = \frac{\theta}{\theta+1} + \frac{\theta}{x^2} 
\sum_{a \in \mathbb{N}^*, a \leq |x|} a \left(1 - \frac{a}{|x|} \right)^{\theta-1}. $$
\end{proposition}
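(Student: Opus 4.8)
The plan is to evaluate $\int_{\mathbb{R}^2}f\,d\tilde{\tau}^{[2]}_{\infty}$ for an arbitrary nonnegative continuous function $f$ with compact support, starting from the representation recalled before the statement:
$$\int_{\mathbb{R}^2}f\,d\tilde{\tau}^{[2]}_{\infty}=\mathbb{E}\left[\sum_{(m_1,k_1)\neq(m_2,k_2)}f\left(\frac{\chi_{m_1}+k_1}{y_{m_1}},\frac{\chi_{m_2}+k_2}{y_{m_2}}\right)\right],$$
where $(y_m)_{m\geq1}$ is a GEM($\theta$) sequence (the Poisson--Dirichlet sequence may be replaced by it, as noted in the remark after Proposition \ref{conv2}) and $(\chi_m)_{m\geq1}$ is an independent i.i.d.\ sequence of uniform variables on $[0,1)$. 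Since $f\geq0$, all the interchanges of sum, integral and expectation below are legitimate by Tonelli. I split the sum into an off-diagonal part ($m_1\neq m_2$) and a diagonal part ($m_1=m_2$, $k_1\neq k_2$), and compute the density coming from each.

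For the off-diagonal part I condition on $(y_m)_{m\geq1}$. For fixed $m$, the set $\{(\chi_m+k)/y_m:k\in\mathbb{Z}\}$ is a uniformly translated copy of the lattice $\frac{1}{y_m}\mathbb{Z}$, hence a stationary point process on $\mathbb{R}$ whose first moment measure is $y_m$ times Lebesgue measure; moreover $\chi_{m_1}$ and $\chi_{m_2}$ are independent for $m_1\neq m_2$. Therefore the conditional expectation of the off-diagonal sum equals $\big(\sum_{m_1\neq m_2}y_{m_1}y_{m_2}\big)\iint f=\big(1-\sum_m y_m^2\big)\iint f$, using $\sum_m y_m=1$ a.s. Taking expectations and invoking the classical identity $\mathbb{E}\big[\sum_m y_m^2\big]=1/(1+\theta)$ (the probability that the first two customers of a $\theta$-Chinese restaurant sit at the same table), the off-diagonal part contributes the constant density $\theta/(\theta+1)$ --- the first term of $\phi_\theta$.

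For the diagonal part I condition on $(y_m)_{m\geq1}$, fix $m$, and write $k_2=k_1+j$ with $j\in\mathbb{Z}\setminus\{0\}$. Integrating over $\chi_m$ and performing the substitution $u=(\chi_m+k_1)/y_m$, which sweeps $\mathbb{R}$ once with $d\chi_m=y_m\,du$ as $(\chi_m,k_1)$ ranges over $[0,1)\times\mathbb{Z}$, gives
$$\mathbb{E}\Big[\sum_{k_1\neq k_2}f\big((\chi_m+k_1)/y_m,(\chi_m+k_2)/y_m\big)\,\Big|\,y_m\Big]=y_m\sum_{j\neq0}\int_{\mathbb{R}}f(u,u+j/y_m)\,du.$$
Summing over $m$ and taking expectations, I use the fact that the size-biased distribution of a GEM($\theta$) (equivalently PD($\theta$)) sequence is the $\mathrm{Beta}(1,\theta)$ law, i.e.\ $\mathbb{E}\big[\sum_m y_m\,g(y_m)\big]=\theta\int_0^1(1-y)^{\theta-1}g(y)\,dy$ for every nonnegative Borel $g$, applied for each fixed $j$ to $g(y)=\int_{\mathbb{R}}f(u,u+j/y)\,du$. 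I then change variables $s=j/y$ (so $dy=j\,s^{-2}\,ds$, with $|s|$ running from $|j|$ to $\infty$ and $\mathrm{sgn}(s)=\mathrm{sgn}(j)$), and afterwards $(u,s)\mapsto(x,y):=(u,u+s)$, of Jacobian $1$; this turns the $j$-th term into
$$\theta\,|j|\iint_{\mathbb{R}^2}f(x,y)\,\frac{\big(1-|j|/|y-x|\big)^{\theta-1}}{|y-x|^2}\,\mathds{1}_{|y-x|>|j|,\ \mathrm{sgn}(y-x)=\mathrm{sgn}(j)}\,dx\,dy.$$
Summing over $j\in\mathbb{Z}\setminus\{0\}$, at a point $(x,y)$ with $t:=y-x$ exactly the indices $j$ with $1\leq|j|<|t|$ and $\mathrm{sgn}(j)=\mathrm{sgn}(t)$ contribute, each giving $\theta a(1-a/|t|)^{\theta-1}/t^2$ with $a=|j|$; this yields the diagonal density $\frac{\theta}{t^2}\sum_{1\leq a\leq|t|}a(1-a/|t|)^{\theta-1}$ (replacing $<$ by $\leq$ is harmless since $\{\,|y-x|\in\mathbb{N}\,\}$ is Lebesgue-null). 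Adding the two contributions gives $\rho(x,y)=\phi_\theta(x-y)$. Local integrability of $\phi_\theta$ --- hence the fact that this is genuinely a density --- follows from Proposition \ref{tildetau}; it can also be checked directly, since for $\theta\geq1$ the summand is bounded and for $\theta<1$ the only blow-up is of order $(|t|-a)^{\theta-1}$ as $|t|\downarrow a\in\mathbb{N}^*$, which is locally integrable.

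I expect the main technical point to be the bookkeeping in the diagonal part: pinning down the domain of integration and the sign conventions through the two successive changes of variables, and verifying that summing the line-integral-type contributions over $j$ reassembles exactly the finite sum $\sum_{1\leq a\leq|x-y|}$ appearing in $\phi_\theta$. The two inputs about the Poisson--Dirichlet family --- $\mathbb{E}[\sum_m y_m^2]=1/(1+\theta)$ and the $\mathrm{Beta}(1,\theta)$ size-biased law --- are standard (see \cite{ABT}) and can alternatively be extracted from the stick-breaking construction of the Ewens measure used in Section \ref{sec::2}.
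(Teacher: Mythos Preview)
Your proof is correct and follows essentially the same route as the paper's: the same off-diagonal/diagonal split, the same computation of the off-diagonal contribution via $\mathbb{E}\big[\sum_m y_m^2\big]=1/(1+\theta)$, and the same reduction of the diagonal part to a size-biased expectation followed by the change of variables $s=j/y$. The only cosmetic difference is that the paper introduces the individual densities $d_m$ of the $y_m$ and then derives $\sum_m d_m(t)=\theta(1-t)^{\theta-1}/t$ from the size-biased property, whereas you invoke the $\mathrm{Beta}(1,\theta)$ size-biased law $\mathbb{E}\big[\sum_m y_m\,g(y_m)\big]=\theta\int_0^1(1-y)^{\theta-1}g(y)\,dy$ directly; your version is marginally cleaner but the content is identical.
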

\begin{proof}
One can write:
\begin{align*}
\bar{\tau}^{[2]}_{\infty} & = \sum_{m \neq m' \geq 1} \sum_{k, k' \in \mathbb{Z}}
\delta_{\left( (\chi_m + k)/y_{m}, (\chi_{m'} + k')/y_{m'} \right)} \\
& + \sum_{m \geq 1} \sum_{k \neq k' \in \mathbb{Z}} \delta_{\left( (\chi_m + k)/y_{m}, (\chi_m + k')/y_{m}
\right)}, 
\end{align*}
which implies that for all nonnegative and continuous functions $f$ from $\mathbb{R}^2$ to $\mathbb{R}$, 
\begin{align}
\mathbb{E} \left[ \left. \int_{\mathbb{R}^2} f \, d\bar{\tau}^{[2]}_{\infty}  \, \right| \, (y_m)_{m \geq 1}
\right]
& = \sum_{m \neq m' \geq 1} \sum_{k, k' \in \mathbb{Z}}
\int_0^1 \int_0^1 f\left( (k+x)/y_{m}, (k'+x')/y_{m'} \right) \, dx \, dx' \nonumber \\
& + \sum_{m \geq 1} \sum_{k \neq k' \in \mathbb{Z}} \int_0^1 f\left( (k+x)/y_m, (k'+x)/y_m \right) \, dx 
\nonumber \\ 
& = \left( \sum_{m \neq m' \geq 1}  y_m y_{m'} \right) \, \int_{\mathbb{R}^2} f(x,x') \, dx \, dx'  \nonumber \\ 
& + \sum_{m \geq 1} \sum_{a \in \mathbb{Z} \backslash \{0\}} 
\, y_m \int_{\mathbb{R}} f(x, x + a/y_m) \, dx. \label{dru}
\end{align}
\noindent
The expectation of the first term of \eqref{dru} is equal to 
$$ \left(1 - \mathbb{E} \left[\sum_{m \geq 1} y_m^2 \right] \right) \, 
\int_{\mathbb{R}^2} f(x,x') \, dx \, dx' = \frac{\theta}{\theta + 1} \, 
\int_{\mathbb{R}^2} f(x,x') d x d x'.$$
In order to compute the expectation of the second term, let us fix $m$ and $a$, and let us denote by $d_m$ the 
density of the probability distribution of $y_m$ (recall that $d_m(x) = 0$ for all $x \notin [0,1]$). 
One has:
\begin{align*}
\mathbb{E} [y_m f(x, x+a/y_m)] & = \int_{0}^{1} t d_m(t)  f(x, x + a/t) dt \\ 
& = \int_{\mathbb{R}} \frac{a^2}{|u|^3} d_m (a/u) f(x, x+u) du
\end{align*}
\noindent
and then:
$$\mathbb{E} \left[  y_m \int_{\mathbb{R}} f(x, x + a/y_m) \, dx \right] 
= \int_{\mathbb{R}^2} \frac{a^2}{|x'-x|^3} d_m (a/(x'-x)) f(x, x') dx dx'.$$
Finally, the expectation of the second term of \eqref{dru} is:
$$ \sum_{a \in \mathbb{Z} \backslash \{0\}} 
\sum_{m \geq 1} \int_{\mathbb{R}^2} \frac{a^2}{|x'-x|^3} d_m (a/(x'-x)) f(x, x') dx dx'$$
which proves Proposition \ref{2correlation} with 
$$\phi_{\theta} (x) = \frac{\theta}{\theta+1} + \frac{1}{|x|^3} 
\sum_{a \geq 1} a^2 \sum_{m \geq 1} d_m (a/|x|).$$
Now, for all continuous functions $g$ from $[0,1]$ to $\mathbb{R}_+$, one has:
$$\mathbb{E} \left[ \sum_{m \geq 1} y_m g(y_m) \right]= \mathbb{E} [g(y_M)],$$
where conditionally on $(y_m)_{m \geq 1}$, the random index $M$ is chosen in a size-biased way, i.e. $M=m$
with probability $y_m$. By classical properties of GEM and Poisson-Dirichlet processes, $y_M$ is equal to
$y_1$ in distribution, and then its density at $x \in [0,1]$ is $\theta (1-x)^{\theta-1}$.
Hence, one deduces that
$$\int_{0}^1 x g(x) \sum_{m \geq 1} d_m(x) = \theta \int_0^1 (1-x)^{\theta-1} g(x) \, dx,$$
and then, for almost every $x \in [0,1]$,
$$\sum_{m \geq 1} d_m(x) = \frac{\theta (1-x)^{\theta-1}}{x},$$
which implies Proposition \ref{2correlation}. 
\end{proof}
\subsection{The smallest eigenangle}
Another interesting problem about the point process associated with $\tau_{N} (M_N)$ is the estimation of 
its smallest positive point. This point corresponds (after scaling the eigenangle by $N$) to
 the first eigenvalue of $M_N$ obtained by starting from
1 and by turning counterclockwise on the unit circle. Its distribution has a limit by the following
result:
\begin{proposition} \label{smallestpoint}
With the notation above, the smallest positive point corresponding to the random measure $\bar{\tau}_N$
tends a.s. to the smallest positive point corresponding to $\bar{\tau}_{\infty}$.
\end{proposition}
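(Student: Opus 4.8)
The plan is to write down the smallest positive points of $\bar{\tau}_N$ and $\bar{\tau}_{\infty}$ explicitly in terms of the data $(y_{N,m})_{N,m\geq1}$, their a.s.\ limits $(y_m)_{m\geq1}$ (a GEM$(\theta)$ process), the suprema $s_m:=\sup_{N\geq1}y_{N,m}$, and the i.i.d.\ shifts $(\chi_m)_{m\geq1}$; then to reduce the almost sure convergence to a statement about finitely many cycle indices, the tail being controlled uniformly in $N$ by the summability \eqref{sm} of $\mathbb{E}[s_m]$. First I would note that, since we are in the uniform case, each $\chi_m$ is uniform on $[0,1)$ and in particular has no atom, so almost surely $\chi_m\in(0,1)$ for every $m$. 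On this event the points of $\bar{\tau}_{\infty}$ produced by the index $m$ are the numbers $(\chi_m+k)/y_m$, $k\in\mathbb{Z}$, the smallest positive one being $\chi_m/y_m$, whence the smallest positive point of $\bar{\tau}_{\infty}$ equals $P_{\infty}:=\inf_{m\geq1}\chi_m/y_m$. Likewise, for each $N$ the set $\{m:y_{N,m}>0\}$ is finite (it indexes the cycles of $\sigma_N$), and the smallest positive point of $\bar{\tau}_N$ is $P_N:=\min\{\chi_m/y_{N,m}:y_{N,m}>0\}$. The goal then becomes $P_N\to P_{\infty}$ almost surely.

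The heart of the argument will be a uniform-in-$N$ control of the contribution of large $m$. Using that $(\chi_m)_{m\geq1}$ is independent of the virtual permutation, hence of $(s_m)_{m\geq1}$, one has for each integer $A\geq1$ the bound $\mathbb{P}[\chi_m\leq As_m]=\mathbb{E}[\min(As_m,1)]\leq A\,\mathbb{E}[s_m]$, so that $\sum_m\mathbb{P}[\chi_m\leq As_m]\leq A\sum_m\mathbb{E}[s_m]<\infty$ by \eqref{sm}. By Borel--Cantelli, for each fixed $A$ one has $\chi_m>As_m$ for all but finitely many $m$ a.s.; intersecting over $A\in\mathbb{N}^*$ yields that almost surely $\chi_m/s_m\to\infty$, and also $s_m\to0$ (from $\sum s_m<\infty$). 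Since $y_{N,m}\leq s_m$ for all $N$ and $y_m=\lim_N y_{N,m}\leq s_m$, this gives: a.s., for every $T>0$ there is a random $m_0$ such that \emph{every} positive point of $\bar{\tau}_N$ coming from an index $m>m_0$ exceeds $T$ (uniformly in $N$, among those $N$ with $y_{N,m}>0$), and the same holds for $\bar{\tau}_{\infty}$; in particular $P_{\infty}$ is a minimum of finitely many strictly positive reals, so $0<P_{\infty}<\infty$.

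To finish, I would fix such a realization, take $T:=P_{\infty}+1$ with its $m_0$, and use that for the finitely many indices $m\leq m_0$ one has $y_m>0$ and $y_{N,m}\to y_m$, so $\min_{1\leq m\leq m_0}\chi_m/y_{N,m}\to\min_{1\leq m\leq m_0}\chi_m/y_m=P_{\infty}<T$; for $N$ large enough all these $y_{N,m}$ are positive and this finite minimum is $<T$. Since every index $m>m_0$ contributes only positive points $>T$ to $\bar{\tau}_N$, it follows that $P_N=\min_{1\leq m\leq m_0}\chi_m/y_{N,m}$ for all large $N$, and hence $P_N\to P_{\infty}$ almost surely. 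I expect the main obstacle to be precisely the tail estimate of the second paragraph --- ruling out that cycles of small relative length eventually create, uniformly in $N$, positive points closer to the origin than $P_{\infty}$ --- which is exactly where the summability \eqref{sm} of $\mathbb{E}[s_m]$ and the independence of the shifts $(\chi_m)$ enter. (Note this also explains why one expects genuine a.s.\ convergence here, in contrast with the general case of Proposition \ref{conv2}: in the uniform case $r(\mathcal{L})=\infty$, there is no atom at $0$, and $\bar{\tau}_{\infty}$ is locally finite with no accumulation of mass near the origin.)
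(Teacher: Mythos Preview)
Your proposal is correct and follows essentially the same route as the paper's proof: both identify the smallest positive point as $\inf_{m}\chi_m/y_{N,m}$ (resp.\ $\inf_m\chi_m/y_m$), use that $\chi_m/s_m\to\infty$ to truncate the infimum to finitely many indices uniformly in $N$, and then invoke $y_{N,m}\to y_m$ on that finite set. Your Borel--Cantelli argument from \eqref{sm} makes explicit the justification of the key tail fact ``for every $A$ there is $m_1$ with $\chi_m/s_m\geq A$ for $m\geq m_1$'', which the paper simply asserts; otherwise the two proofs differ only in that the paper packages the conclusion as a $\limsup/\liminf$ sandwich while you argue directly that $P_N$ equals the finite minimum for all large $N$.
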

\begin{proof}
One has, for all $m \geq 1$, $$\chi_m/y_{N,m} \underset{N \rightarrow \infty}{\longrightarrow}
\chi_m/y_m,$$
if by convention, $\chi_m/y_{N,m} = + \infty$ for $y_{N,m}=0$.
One deduces that for all $m_0 \geq 1$,
\begin{align*} \label{smallestpoint}
\underset{N \rightarrow \infty}{\lim \, \sup} \, \inf \{\chi_m/y_{N,m}, m \geq 1\}
& \leq \underset{N \rightarrow \infty}{\lim \, \sup} \, \inf \{\chi_m/y_{N,m}, 1 \leq m \leq  m_0\} \\
& = \inf \{\chi_m/y_{m}, 1 \leq m \leq  m_0\} 
\end{align*}
\noindent
and then, by taking $m_0\to\infty$:
$$\underset{N \rightarrow \infty}{\lim \, \sup} \, \inf \{\chi_m/y_{N,m}, m \geq 1\}
\leq \inf \{\chi_m/y_{m}, m \geq  1\}.$$
On the other hand, for all $A > 0$, there exists a.s. $m_1 \geq 1$ such that $\chi_m/s_m \geq A$ for 
all $m \geq m_1$, which implies that $\chi_m/y_{N,m} \geq A$ and $\chi_m/y_m \geq A$. Consequently
\begin{align*}
\underset{N \rightarrow \infty}{\lim \, \inf} \, \inf \{\chi_m/y_{N,m}, m \geq 1\}
& \geq A \, \wedge \, \underset{N \rightarrow \infty}{\lim \, \inf} \, \inf \{\chi_m/y_{N,m}, 1 \leq m \leq m_1\}
\\ & \geq A \, \wedge \,  \inf \{\chi_m/y_{m}, 1 \leq m \leq m_1\}
\\ & \geq A \, \wedge \, \inf \{\chi_m/y_m, m \geq 1 \}. 
\end{align*}
\noindent
By taking $A \rightarrow \infty$, one obtains:
$$\underset{N \rightarrow \infty}{\lim \, \inf} \, \inf \{\chi_m/y_{N,m}, m \geq 1\}
\geq \inf \{\chi_m/y_m, m \geq 1 \},$$
and finally, 
$$\inf \{\chi_m/y_{N,m}, m \geq 1\} \, \underset{N \rightarrow \infty}{\longrightarrow}
\, \inf \{\chi_m/y_m, m \geq 1 \},$$
which proves Proposition \ref{smallestpoint}. 
\end{proof}
\noindent
One immediately deduces the following:
\begin{corollary}
The smallest positive point of the random measure $\tau_{N}(M_N)$
converges in distribution to the smallest positive point of $\tau_{\infty} (\theta, \mathcal{L})$.
\end{corollary}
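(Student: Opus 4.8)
The plan is to deduce the corollary directly from Proposition \ref{smallestpoint}, combined with the two distributional identities $\tau_N(M_N) = \bar{\tau}_N$ and $\tau_{\infty}(\theta,\mathcal{L}) = \bar{\tau}_{\infty}$ (in distribution) recorded at the beginning of this section. First I would observe that, in the uniform case considered here, the ``smallest positive point'' is a genuine measurable functional of the random measure. For $\bar{\tau}_N$ the atoms are the points $(\chi_m+k)/y_{N,m}$ with $y_{N,m}>0$, $m\ge 1$, $k\in\mathbb{Z}$, and since $\chi_m\in(0,1)$ almost surely, the smallest positive atom equals $\inf\{\chi_m/y_{N,m}:\ m\ge 1,\ y_{N,m}>0\}$, a finite quantity because $\sigma_N$ has only finitely many cycles. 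For $\bar{\tau}_{\infty}$ the smallest positive atom equals $\inf\{\chi_m/y_m:\ m\ge 1\}$, and the argument used in the proof of Proposition \ref{smallestpoint}, namely that $\chi_m/s_m\to\infty$ almost surely so that for every $A>0$ only finitely many $m$ satisfy $\chi_m/y_m\le A$, shows that this infimum is a.s. attained and a.s. strictly positive, hence a.s. finite. Writing this functional as a measurable map on the space of locally finite point measures, the two distributional identities above transfer to: the smallest positive point of $\tau_N(M_N)$ has the same law as the smallest positive point of $\bar{\tau}_N$, and the smallest positive point of $\tau_{\infty}(\theta,\mathcal{L})$ has the same law as the smallest positive point of $\bar{\tau}_{\infty}$.

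Next I would invoke Proposition \ref{smallestpoint}, which asserts the almost sure convergence of the smallest positive point of $\bar{\tau}_N$ to that of $\bar{\tau}_{\infty}$. Almost sure convergence implies convergence in distribution, so the smallest positive point of $\bar{\tau}_N$ converges in distribution to the smallest positive point of $\bar{\tau}_{\infty}$. Composing this with the two laws identified in the previous step yields that the smallest positive point of $\tau_N(M_N)$ converges in distribution to the smallest positive point of $\tau_{\infty}(\theta,\mathcal{L})$, which is exactly the statement of the corollary.

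The only genuinely delicate point is the first step: one must check that ``smallest positive point'' is an honest real random variable for each of the measures $\tau_N(M_N)$, $\bar{\tau}_N$, $\tau_{\infty}(\theta,\mathcal{L})$, $\bar{\tau}_{\infty}$, and that it is unaffected by the probability-zero events on which some $\chi_m$ vanishes or the relevant infimum fails to be attained; this is where the local finiteness (Propositions \ref{bartau} and \ref{conv2}) and the $\chi_m/s_m\to\infty$ estimate are used. Once this bookkeeping is in place, the conclusion is a formal consequence of Proposition \ref{smallestpoint} together with the elementary facts that equality in law is preserved under measurable maps and that almost sure convergence implies convergence in law; no further estimates are needed.
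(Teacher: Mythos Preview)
Your proof is correct and follows the same route as the paper, which simply records the corollary as an immediate consequence of Proposition \ref{smallestpoint} together with the distributional identities $\tau_N(M_N)\stackrel{d}{=}\bar{\tau}_N$ and $\tau_\infty(\theta,\mathcal{L})\stackrel{d}{=}\bar{\tau}_\infty$. You have been more explicit than the paper about the measurability and well-definedness of the smallest positive point, but the underlying argument is identical.
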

\noindent
At this stage, one naturally seeks for the explicit computation of the distribution
of the smallest positive point of $\tau_{\infty}(\theta, \mathcal{L})$. 
We can remark the similarity between this process and the limit point process 
(a determinantal process with sine kernel) obtained from the scaled eigenangles
(with the same scaling by $N$)
 of a random unitary matrix following the Haar measure. For the Haar measure, 
the law of the smallest positive point satisfies a certain Painlevé-V differential equation (see for instance \cite{agz} for more details and references). In our
 case, one can also obtain this probability distribution as a solution of some integral equation. More precisely, one has the following result:
 \begin{proposition} \label{nnkd}
For all $x \geq 0$, let $G(x)$ be the probability that the point process $\tau_{\infty} (\theta, \mathcal{L})$ has no point in the interval $(0,x)$, and for all $x \in \mathbb{R}$, let 
us set:
$$H(x) := \mathds{1}_{x > 0} x^{\theta -1} G(x) .$$
Then $H$ is integrable and satisfies the following equation:
\begin{equation}
 x H(x) = \theta \, \int_0^1 (1-y) H(x-y) \, dy. \label{kdnq}
 \end{equation}
Moreover, if the Fourier transform $\widehat{H}$ of $H$ is given by
	$$ \widehat{H}(\lambda) = \int_{-\infty}^{\infty} e^{- i \lambda x} H(x) \, dx,$$
	then it satisfies the equation:
	\begin{equation}
	\widehat{H} (\lambda)  = \widehat{H} (0) \, \exp \left(-i\theta \int_0^{\lambda}  \, \frac{1 - e^{-i \mu} -i \mu}{\mu^2} \, d \mu \right), \label{ptrbl}
	\end{equation}
	for all $\lambda \in \mathbb{R}$.
 \end{proposition}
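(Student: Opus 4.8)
The plan is to first obtain an explicit expression for $G$ in terms of the underlying GEM$(\theta)$ process, then to turn the self-similarity of that process into the integral equation \eqref{kdnq}, and finally to read off \eqref{ptrbl} from a first-order linear ODE for $\widehat{H}$. Recall that $\tau_{\infty}(\theta,\mathcal{L})$ has the same law as $\bar{\tau}_{\infty}=\sum_{m\ge1}\sum_{k\in\mathbb{Z}}\delta_{(\chi_m+k)/y_m}$, where $(y_m)_{m\ge1}$ is a GEM$(\theta)$ process and $(\chi_m)_{m\ge1}$ an independent i.i.d. sequence of uniform variables on $[0,1)$. For each fixed $m$ the smallest \emph{positive} element of the progression $\{(\chi_m+k)/y_m:k\in\mathbb{Z}\}$ is $\chi_m/y_m$ (the terms with $k\ge1$ are larger, those with $k\le-1$ are negative), so the event "$\bar{\tau}_{\infty}$ has no point in $(0,x)$" is exactly $\{\chi_m\ge xy_m\ \text{for all } m\ge1\}$, and conditioning on $(y_m)_{m\ge1}$ and using independence of the $\chi_m$ gives
$$G(x)=\mathbb{E}\Big[\prod_{m\ge1}(1-xy_m)_+\Big],\qquad x\ge0.$$
Since $1-t\le e^{-t}$ and $\sum_{m\ge1}y_m=1$ almost surely, the product is $\le e^{-x}$, hence $G(x)\le e^{-x}$; together with $G\le1$ this shows that $H(x)=\mathds{1}_{x>0}x^{\theta-1}G(x)$ is dominated by $x^{\theta-1}\min(1,e^{-x})$ on $(0,\infty)$, so $H$ and $x\mapsto xH(x)$ both lie in $L^1(\mathbb{R})$.

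Next I would derive \eqref{kdnq}. Using the stick-breaking representation $y_1=V_1$, $y_m=V_m\prod_{i<m}(1-V_i)$ with $V_i$ i.i.d. Beta$(1,\theta)$, one has $\prod_{m\ge2}(1-xy_m)_+=\prod_{j\ge1}\big(1-x(1-y_1)\tilde y_j\big)_+$, where $(\tilde y_j)_{j\ge1}:=(y_{j+1}/(1-y_1))_{j\ge1}$ is again GEM$(\theta)$ and independent of $y_1$ (the same fact already used in the proof of Lemma~\ref{a}). Taking the conditional expectation given $y_1$, and recalling that $y_1$ has density $\theta(1-v)^{\theta-1}$ on $(0,1)$, yields
$$G(x)=\int_0^1\theta(1-v)^{\theta-1}(1-xv)_+\,G\big(x(1-v)\big)\,dv\qquad(x>0).$$
Substituting $v\mapsto 1-s$ and then $u=sx$, multiplying through by $x^{\theta}$ and using $x^{\theta}G(x)=xH(x)$ and $u^{\theta-1}G(u)=H(u)$, the right-hand side becomes $\theta\int_{(x-1)_+}^{x}(u-x+1)H(u)\,du$; the change of variables $y=x-u$ turns this into $\theta\int_0^{\min(x,1)}(1-y)H(x-y)\,dy$, and since $H$ vanishes on $(-\infty,0]$ the upper limit may be raised to $1$. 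This gives \eqref{kdnq} for every $x\in\mathbb{R}$, i.e. $xH=\theta\,(H\ast k)$ with $k(y)=(1-y)\mathds{1}_{[0,1]}(y)$, and the integrability of $H$ established above shows $xH\in L^1$ as well.

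Finally I would pass to Fourier transforms. Because $H$ and $xH$ are in $L^1$, $\widehat{H}$ is $C^1$ with $\widehat{H}'(\lambda)=-i\,\widehat{xH}(\lambda)$, and a direct computation gives $\widehat{k}(\lambda)=\int_0^1(1-y)e^{-i\lambda y}\,dy=(1-e^{-i\lambda}-i\lambda)/\lambda^2$ (extended continuously by $1/2$ at $\lambda=0$). Transforming $xH=\theta\,(H\ast k)$ therefore yields the linear ODE $i\,\widehat{H}'(\lambda)=\theta\,\widehat{k}(\lambda)\,\widehat{H}(\lambda)$; since $\widehat{k}$ is continuous on $\mathbb{R}$ this ODE has a unique solution with initial value $\widehat{H}(0)=\int_{\mathbb{R}}H>0$, namely
$$\widehat{H}(\lambda)=\widehat{H}(0)\exp\!\Big(-i\theta\int_0^{\lambda}\frac{1-e^{-i\mu}-i\mu}{\mu^2}\,d\mu\Big),$$
which is \eqref{ptrbl}. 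The main obstacle is the middle step: one must get the self-similar recursion for $G$ exactly right — in particular the observation that the smallest positive atom contributed by circle $m$ is $\chi_m/y_m$ \emph{regardless of $x$}, so that the recursion and hence \eqref{kdnq} are valid for all $x>0$ rather than only for $x\le1$ — together with the a priori bound $G(x)\le e^{-x}$ that makes $H$ and $xH$ integrable; the remaining manipulations (change of variables, computing $\widehat{k}$, solving the ODE) are routine.
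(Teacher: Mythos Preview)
Your proof is correct and follows essentially the same route as the paper: express $G$ via the GEM process, use the stick-breaking self-similarity (conditioning on $y_1$) to obtain the recursion $G(x)=\int_0^1\theta(1-v)^{\theta-1}(1-xv)_+\,G(x(1-v))\,dv$, rewrite it as \eqref{kdnq}, and then solve the linear ODE for $\widehat{H}$ coming from $xH=\theta\,H\ast k$.

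The one genuine difference is the integrability argument. The paper first derives \eqref{kdnq} and then bootstraps from it: using $H(x-y)\le 2^{(\theta-1)_+}H(x-1)$ for $x\ge2$, $0\le y\le1$, equation \eqref{kdnq} gives $H(x)\le \tfrac{2^{(\theta-1)_+}\theta}{2x}\,H(x-1)$, whence super-exponential decay. Your route is shorter and more direct: from $G(x)=\mathbb{E}\big[\prod_{m\ge1}(1-xy_m)_+\big]$ and $1-t\le e^{-t}$ together with $\sum_m y_m=1$ one gets $G(x)\le e^{-x}$, so $H(x)\le x^{\theta-1}e^{-x}$ and both $H$ and $xH$ are in $L^1$ immediately. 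This buys you a cleaner argument that does not rely on the integral equation for the decay estimate; the paper's bootstrap, on the other hand, yields the slightly stronger qualitative statement that $H$ decays faster than any exponential, though this is not needed for the Fourier step.
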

\begin{proof}
The probability $G(x)$ can be expressed as follows:
$$G(x) = \mathbb{P} \left[ \inf_{m \geq 1}\{ \chi_m/ y_m \} \geq x \right],$$
where we recall that $(y_m)_{m \geq 1}$ is a GEM process of parameter $\theta$. 
Conditionally on $y_1$, the sequence $(y'_m := y_{m+1}/(1-y_1))_{m \geq 1}$ is also a GEM
process of parameter $\theta$. Therefore, 
\begin{align*}
\mathbb{P} \left[ \inf_{m \geq 2} \{ \chi_m/y_m \} \geq x \, | \, \chi_1, y_1 \, \right] 
& = \mathbb{P} \left[ \inf_{m \geq 1} \{ \chi_{m + 1} / y'_{m} \} \geq x (1-y_1) \, | \chi_1, y_1 \, \right]
= G(x (1-y_1)).
\end{align*}
By taking the expectation, one obtains:
\begin{align*}
G(x) & = \mathbb{E} \left[ \mathds{1}_{\chi_1/y_1 \geq x} \, G(x(1-y_1)) \right] \\
& = \mathbb{E} \left[ (1- xy_1)_+ G(x(1-y_1)) \right] \\
& = \int_0^{1 \wedge 1/x} \theta (1-y)^{\theta  - 1} (1 - yx) G (x(1-y)) \, dy 
\end{align*}
since the law of $y_1$ has density $\theta (1-y)^{\theta-1}$ with respect to Lebesgue measure. Hence:
$$ G(x) = \frac{\theta}{x^{\theta}} \int_{0}^{1} (x-y)^{\theta - 1} (1-y) G(x-y) \, dy$$
where by convention, we set $G(y) = 0$ for all strictly negative $y$. 
This implies the equation \eqref{kdnq} in Proposition \ref{nnkd}. Note that for all $x \geq 0$,
$$H(x) \leq x^{\theta-1}.$$
Moreover, for $x \geq 2$, $0 \leq y \leq 1$:
\begin{align*}
H(x-y) & = (x-y)^{\theta-1} G(x-y) \leq (x-y)^{\theta-1} G(x-1) \\
& \leq (x-1)^{\theta-1} 2^{(\theta-1)_+}  G(x-1) \\ & \leq 2^{(\theta-1)_+}  H(x-1),
\end{align*}
and by \eqref{kdnq},
$$ H(x) \leq \frac{ 2^{(\theta-1)_+} \theta  }{2x} H(x-1).$$
Hence, at infinity, $H$ decreases faster than exponentially, and $H$, $x \mapsto xH(x)$
 are in $L^1$. One deduces that $\widehat{H}$,
 is well-defined and differentiable. By \eqref{kdnq} one has for all $\lambda \in \mathbb{R}$:
$$\widehat{H}'(\lambda) = - i \theta \widehat{H} (\lambda) \widehat{K} (\lambda),$$
where $K$ is the function defined by $K(y) = (1-y) \mathds{1}_{0 \leq y \leq 1}$.
Therefore:
$$\widehat{H} (\lambda)  = \widehat{H} (0) \, \exp \left( -i \theta \int_0^\lambda \hat{K} (\mu) \, d \mu \right),$$
which implies \ref{ptrbl}.
\end{proof}

\providecommand{\bysame}{\leavevmode\hbox to3em{\hrulefill}\thinspace}
\providecommand{\MR}{\relax\ifhmode\unskip\space\fi MR }
\providecommand{\MRhref}[2]{%
  \href{http://www.ams.org/mathscinet-getitem?mr=#1}{#2}
}
\providecommand{\href}[2]{#2}

\end{document}